\documentclass[11pt]{article}

\usepackage[letterpaper,margin=1.1in]{geometry}
\usepackage{amsmath,amssymb,amsthm}
\usepackage{enumitem}
\usepackage[hidelinks]{hyperref}
\usepackage{microtype}

\newtheorem{theorem}{Theorem}
\newtheorem{proposition}{Proposition}
\newtheorem{lemma}{Lemma}
\newtheorem{corollary}{Corollary}
\newtheorem{definition}{Definition}
\newtheorem{remark}{Remark}

\DeclareMathOperator{\rank}{rank}
\DeclareMathOperator{\dist}{dist}
\DeclareMathOperator{\tr}{tr}
\DeclareMathOperator{\diag}{diag}
\DeclareMathOperator{\Var}{Var}
\DeclareMathOperator{\Cov}{Cov}

\newcommand{\R}{\mathbb R}
\newcommand{\Pbb}{\mathbb P}
\newcommand{\Ebb}{\mathbb E}
\newcommand{\1}{\mathbf 1}
\newcommand{\norm}[1]{\left\lVert #1\right\rVert}
\newcommand{\smin}{\sigma_{\min}}
\newcommand{\eps}{\varepsilon}

\title{Gaussian Critical-Threshold Instability in Real Phase Retrieval}
\author{Christian E. H{\"a}ggblom}
\date{}
\hypersetup{
  pdftitle={Gaussian Critical-Threshold Instability in Real Phase Retrieval},
  pdfauthor={Christian E. H{\"a}ggblom}
}

\begin{document}

\maketitle

\begin{abstract}
We prove an exponential instability theorem for real Gaussian phase retrieval
at the critical injectivity threshold.  If
\(A\in\mathbb R^{(2M-1)\times M}\) has iid standard Gaussian entries, then, for
every sequence \(w_M\to\infty\) satisfying \(w_M=e^{o(M)}\),
\[
        \Pbb\left\{
        \frac{1}{
        w_M\sqrt M\binom{2M-1}{M}}
        \le
        \omega(A)
        \le
        \frac{w_M}{
        \sqrt M\binom{2M-1}{M}}
        \right\}
        \longrightarrow1.
\]
Here \(\omega(A)\) is the Balan--Wang stability parameter.  The upper estimate
is the main result of the paper; the matching lower estimate follows
elementarily from a union bound and the square Gaussian hard-edge bound.
Consequently,
\[
        -\frac1M\log\omega(A)
        \longrightarrow
        \log4
\]
in probability.

The proof begins with a critical-threshold reduction showing that, for
full-spark matrices,
\[
        \omega(A)
        =
        L_A
        =
        \min_{|T|=M}\sigma_{\min}(A_T),
\]
where \(L_A\) is the optimal lower Lipschitz constant of the real phaseless map
\(x\mapsto |Ax|\).  The main probabilistic step studies the lower extreme of
the least singular values over all \(M\)-row minors.  A noncentral overlap
estimate gives the correct positive-probability scale.  The high-probability
upgrade is obtained from a central-overlap linearization: after conditioning
on the common rows of two minors, the problem reduces to a weighted
inverse-tail asymptotic for an independent square Gaussian block.  Gaussian
hard-edge estimates and inverse-Wishart concentration then yield asymptotic
independence in the central-overlap regime.
\end{abstract}

\section{Introduction}

Phase retrieval asks for the recovery of a vector \(x\in\mathbb R^M\) from
phaseless linear measurements
\[
        |Ax| = \bigl(|\langle A_1,x\rangle|,\ldots,
        |\langle A_N,x\rangle|\bigr),
\]
where \(A_1,\ldots,A_N\in\mathbb R^M\) are the measurement vectors. Since
\(x\) and \(-x\) generate the same data, injectivity is understood on the
quotient space \(\mathbb R^M/\{\pm1\}\).

In the real case, the sharp generic injectivity threshold is
\[
        N=2M-1.
\]
This threshold is characterized by the complement property: a real frame gives
injective phase retrieval if and only if, for every partition of the
measurement vectors, at least one side spans \(\mathbb R^M\). For background on
this criterion and its role in stability, see
\cite{BandeiraCahillMixonNelson2014}. At the critical threshold \(N=2M-1\),
generic frames are injective, but injectivity alone gives no quantitative
control on the inverse map.

The relevant stability question is therefore whether the phaseless map
\[
        \Phi_A:\mathbb R^M/\{\pm1\}\to\mathbb R^N,
        \qquad
        \Phi_A(x)=|Ax|,
\]
has a lower Lipschitz constant that remains quantitatively meaningful at the
critical threshold. Balan and Wang introduced a subset stability parameter
\[
        \omega(A)
        :=
        \min_{S\subset[N]:\operatorname{rank}(A_S)<M}
        \sigma_M(A_{S^c}),
\]
which gives a quantitative version of the complement property
\cite{BalanWang2015}. In the real setting, this quantity is directly tied to
the optimal lower Lipschitz constant of the phaseless map.

The specific critical-threshold Gaussian stability problem studied here was
brought to the author's attention through Afonso Bandeira's Randomstrasse101
entry ``Injectivity and Stability of Phase Retrieval.''  A stable version
appears as Entry~10, including Open Problem~21, in
\emph{Randomstrasse101: Open Problems of 2025}
\cite{BandeiraDmitrievLuccaNizicNikolacRodder2026}.  It formulates the Gaussian
version of the problem and asks for the typical behavior of \(\omega(A)\) when
\[
        A\in\mathbb R^{(2M-1)\times M}
\]
has iid standard Gaussian entries. The present paper identifies its natural scale up to arbitrary diverging
subexponential factors; the relation to independent work on the exponential
rate is discussed below.

Our first observation is a deterministic reduction at the critical threshold.
If \(A\in\mathbb R^{(2M-1)\times M}\) is full spark, then
\[
        \omega(A)
        =
        L_A
        =
        \min_{|T|=M}\sigma_{\min}(A_T),
\]
where \(L_A\) is the optimal lower Lipschitz constant of the real phaseless map
and \(A_T\) denotes the \(M\times M\) row submatrix indexed by \(T\). Thus the
critical-threshold stability problem reduces exactly to the lower extreme of
the least singular values of all square \(M\)-row minors.

The main result identifies the typical exponential scale of this lower extreme.  Let
\[
        K_M=\binom{2M-1}{M}.
\]
If \(A\in\mathbb R^{(2M-1)\times M}\) has iid standard Gaussian entries, then
for every sequence \(w_M\to\infty\) satisfying \(w_M=e^{o(M)}\),
\[
        \Pbb\left\{
        \frac{1}{w_M\sqrt M\,K_M}
        \le
        \omega(A)
        \le
        \frac{w_M}{\sqrt M\,K_M}
        \right\}
        \longrightarrow1.
\]
The upper bound is the substantive probabilistic result of the paper, while
the matching lower bound follows from a union bound and the square Gaussian
hard-edge estimate.  Since
\[
        \sqrt M\,K_M=4^{M+o(M)},
\]
it follows that
\[
        -\frac1M\log\omega(A)\longrightarrow\log4
\]
in probability.  Thus the critical exponential base for Gaussian frames is
\(1/4\), although the result does not assert concentration at an exact
subexponential prefactor.

The proof proceeds by a second-moment analysis over the family of all
\(M\times M\) minors. For a fixed \(M\)-subset \(T\), let
\[
        E_T(t)=
        \left\{\sigma_{\min}(A_T)\le \frac{t}{\sqrt M}\right\},
        \qquad
        Z_t=\sum_{|T|=M}\mathbf 1_{E_T(t)}.
\]
The first moment suggests the scale \(t\asymp K_M^{-1}\). To control the second
moment, pairs of minors are grouped according to their overlap. A non-centered
overlap estimate gives a sharp positive-probability theorem at the scale
\((\sqrt M K_M)^{-1}\).

The high-probability upgrade requires a sharper analysis of central overlaps.
After conditioning on the common rows of two minors and using rotational
invariance, the problem reduces to the block matrix
\[
        \begin{pmatrix}
        S&0\\
        G_1&G_2
        \end{pmatrix},
\]
where \(G_2\) is an independent square Gaussian block. The key local computation
is a weighted inverse-tail asymptotic for \(\|G_2^{-1}K\|\), with
\(K=(-G_1S^{-1},I)\). This computation is derived in the main text and yields a
conditional linearization of the bad-completion probability. Standard
inverse-Wishart concentration then shows that the resulting coefficient
self-averages in the central-overlap regime, giving asymptotic independence on
average and hence the high-probability theorem.

After completing this manuscript, the author became aware of independent work by
Yitzchak Shmalo~\cite{Shmalo2026} on extreme least singular values of Gaussian
row submatrices. In particular, Shmalo's Theorem~1.1 and Corollary~1.2 yield the exponential rate
\(1/4\) in the real critical phase-retrieval regime, as a specialization of a
more general fixed-aspect-ratio result. There is some overlap in the deterministic
critical-threshold reduction. Beyond this reduction, the probabilistic arguments
have different emphases: Shmalo's approach is directed toward identifying the
exponential rate in a more general fixed-aspect-ratio setting, whereas the present
argument focuses on the real critical threshold and develops the finer
high-probability localization at the natural scale
\[
    \frac{1}{\sqrt{M}\binom{2M-1}{M}},
\]
up to arbitrary diverging subexponential factors.

The rest of the paper is organized as follows. Section~\ref{sec:setup}
introduces the stability quantities and Section~\ref{sec:critical-threshold} proves the critical-threshold
reduction. Sections~\ref{sec:counting}--\ref{sec:positive} establish the
positive-probability theorem via the non-centered overlap profile. Section
\ref{sec:high-probability} proves the high-probability theorem using the
central-overlap linearization. The appendices record the standard Gaussian
hard-edge, rectangular hard-edge, inverse-Wishart, and hypergeometric estimates
used in the proof.

\section{Setup}
\label{sec:setup}

Let
\[
        A\in\R^{N\times M},
        \qquad
        A=\begin{pmatrix}A_1\\ \vdots\\ A_N\end{pmatrix},
\]
where \(N\ge M\ge1\) and \(A_i\in\R^M\) denotes the \(i\)-th row.  For \(S\subset[N]\), let \(A_S\)
denote the row submatrix indexed by \(S\).  Singular values are ordered decreasingly;
for an \(n\times M\) matrix \(B\), \(\sigma_M(B)^2=\lambda_{\min}(B^\top B)\), with
\(\sigma_M(B)=0\) if \(\rank(B)<M\).

\begin{definition}[Full spark]
A matrix \(A\in\R^{N\times M}\) is called full spark if every \(M\)-row submatrix is
nonsingular.  Equivalently,
\[
        \rank(A_S)=\min\{|S|,M\}
        \qquad\text{for every }S\subset[N].
\]
\end{definition}

The Balan--Wang subset stability quantity is
\[
        \omega(A)
        :=
        \min_{S\subset[N]:\ \rank(A_S)<M}
        \sigma_M(A_{S^c}).
\]
It is a quantitative form of the complement property for real phase retrieval.

Let
\[
        \Phi_A(x)=|Ax|,
        \qquad x\in\R^M,
\]
and equip \(\R^M/\{\pm1\}\) with the quotient metric
\[
        \dist(x,y)=\min_{\varepsilon\in\{\pm1\}}\norm{x-\varepsilon y}_2.
\]
Define the optimal Lipschitz constants
\[
        L_A
        :=
        \inf_{\dist(x,y)\ne0}
        \frac{\norm{|Ax|-|Ay|}_2}{\dist(x,y)},
        \qquad
        U_A
        :=
        \sup_{\dist(x,y)\ne0}
        \frac{\norm{|Ax|-|Ay|}_2}{\dist(x,y)}.
\]
The associated condition number is
\[
        \beta_A:=\frac{U_A}{L_A},
\]
with the convention \(\beta_A=+\infty\) when \(L_A=0\).

We use the following known formula of Balan and Wang; see \cite{BalanWang2015} and also \cite{XiaXuXu2024} for the condition-number formulation used here.

\begin{theorem}[Balan--Wang lower Lipschitz formula]
For real \(A\in\R^{N\times M}\),
\[
        U_A=\norm{A}_2
\]
and
\[
        L_A=\Delta_A,
\]
where
\[
        \Delta_A
        :=
        \min_{I\subset[N]}
        \sqrt{
        \lambda_{\min}(A_I^\top A_I)
        +
        \lambda_{\min}(A_{I^c}^\top A_{I^c})
        }.
\]
Consequently,
\[
        \beta_A=\frac{\norm{A}_2}{\Delta_A}.
\]
\end{theorem}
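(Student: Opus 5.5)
The plan is to reduce everything to one pointwise identity for the difference of moduli. For real numbers \(a,b\) we have \(\bigl||a|-|b|\bigr|=\min\{|a-b|,|a+b|\}\). For \(x,y\in\R^M\), set \(u=x-y\) and \(v=x+y\). Then
\[
        \norm{|Ax|-|Ay|}_2^2=\sum_{i=1}^N\min\bigl\{\langle A_i,u\rangle^2,\langle A_i,v\rangle^2\bigr\}
        =\min_{I\subset[N]}\Bigl(\norm{A_Iu}_2^2+\norm{A_{I^c}v}_2^2\Bigr),
\]
and also \(\dist(x,y)^2=\min\{\norm{u}_2^2,\norm{v}_2^2\}\). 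The map \((x,y)\mapsto(u,v)\) is a bijection of \(\R^M\times\R^M\), so both Lipschitz constants become extremal problems in the pair \((u,v)\), with \(\dist(x,y)\neq0\) exactly when \(u\ne0\) and \(v\ne0\). Throughout I use the convention \(\lambda_{\min}(A_\emptyset^\top A_\emptyset)=0\), consistent with \(\sigma_M=0\) for rank-deficient blocks.

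\textbf{Upper constant.} Each summand is at most \(\langle A_i,u\rangle^2\), and also at most \(\langle A_i,v\rangle^2\). Summing gives
\[
        \norm{|Ax|-|Ay|}_2^2\le\min\{\norm{Au}_2^2,\norm{Av}_2^2\}\le\norm{A}_2^2\,\dist(x,y)^2,
\]
so \(U_A\le\norm{A}_2\). For equality, take \(y=0\) and let \(x\) be a top right singular vector. Then \(\dist(x,0)=\norm{x}_2\) and \(\norm{|Ax|}_2=\norm{Ax}_2=\norm{A}_2\norm{x}_2\), so the bound is attained.

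\textbf{Lower constant, inequality \(L_A\ge\Delta_A\).} For every \(I\) we have
\[
        \norm{A_Iu}_2^2+\norm{A_{I^c}v}_2^2\ge\lambda_{\min}(A_I^\top A_I)\norm{u}_2^2+\lambda_{\min}(A_{I^c}^\top A_{I^c})\norm{v}_2^2 .
\]
The right-hand side is in turn at least \(\bigl(\lambda_{\min}(A_I^\top A_I)+\lambda_{\min}(A_{I^c}^\top A_{I^c})\bigr)\min\{\norm{u}_2^2,\norm{v}_2^2\}\), which is at least \(\Delta_A^2\,\dist(x,y)^2\). Minimizing over \(I\) in the identity above gives \(L_A\ge\Delta_A\).

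\textbf{Lower constant, attainment \(L_A\le\Delta_A\).} Choose \(I\) attaining \(\Delta_A\). Let \(u_0\) be a unit eigenvector of \(A_I^\top A_I\) for its smallest eigenvalue, and \(v_0\) a unit eigenvector of \(A_{I^c}^\top A_{I^c}\) for its smallest eigenvalue; for an empty block, any unit vector will do. Put \(x=(u_0+v_0)/2\) and \(y=(v_0-u_0)/2\). Then \(u=u_0\) and \(v=v_0\) are nonzero, so \(\dist(x,y)=1\). By the identity, using this particular \(I\) as a competitor in the minimum,
\[
        \norm{|Ax|-|Ay|}_2^2\le\norm{A_Iu_0}_2^2+\norm{A_{I^c}v_0}_2^2=\Delta_A^2 .
\]
Hence \(L_A\le\Delta_A\). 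The formula for \(\beta_A\) then follows immediately, including the case \(\Delta_A=0\) under the stated convention.

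The main obstacle is the identity itself. The delicate points are, first, recognizing that the minimum over sign patterns decouples into a minimum over subsets \(I\). Second, the quotient metric must pair \(u=x-y\) with \(v=x+y\) in exactly the way that makes the two eigenvalue terms add rather than compete. Everything else is routine bookkeeping of the edge cases \(I=\emptyset\) and \(I=[N]\).
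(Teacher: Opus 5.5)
Your proof is correct and complete. The paper gives no proof of this statement: it quotes the formula from Balan and Wang (and from Xia, Xu, and Xu for the condition-number form) and uses it only as a black box in the critical-threshold reduction.

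You instead give the standard self-contained derivation. The scalar identity $\bigl||a|-|b|\bigr|=\min\{|a-b|,|a+b|\}$ and the substitution $u=x-y$, $v=x+y$ turn both $\norm{|Ax|-|Ay|}_2^2$ and $\dist(x,y)^2$ into minima. The sign-pattern minimum then decouples coordinatewise into a minimum over subsets $I\subset[N]$. Both the lower bound and its attainment reduce to Rayleigh quotients of $A_I^\top A_I$ and $A_{I^c}^\top A_{I^c}$.

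Compared with the citation, your route makes the paper independent of the literature and gives two small extras. First, it shows that the infimum defining $L_A$ and the supremum defining $U_A$ are attained. Second, it makes transparent why a rank-deficient block contributes nothing. That is exactly what the critical-threshold reduction exploits when one side of the partition has at most $M-1$ rows.

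One minor remark: your convention for the empty block is automatic rather than an extra assumption. The matrix $A_\emptyset^\top A_\emptyset$ is the $M\times M$ zero matrix, and more generally $\lambda_{\min}(A_I^\top A_I)=0$ whenever $\rank A_I<M$.
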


\section{Critical-threshold reduction}
\label{sec:critical-threshold}

We now specialize to the critical real threshold
\[
        N=2M-1.
\]

\begin{lemma}[Monotonicity under adding rows]
If \(R\subset T\subset[N]\), then
\[
        A_T^\top A_T-A_R^\top A_R=A_{T\setminus R}^\top A_{T\setminus R}\succeq0.
\]
Therefore
\[
        \sigma_M(A_T)\ge \sigma_M(A_R).
\]
\end{lemma}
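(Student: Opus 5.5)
The plan is to prove the identity directly from the definition of the Gram matrix as a sum of rank-one terms, and then pass to the smallest eigenvalue via the variational characterization. Since \(A_T^\top A_T=\sum_{i\in T}A_i^\top A_i\), where each \(A_i\) is a row vector so that \(A_i^\top A_i\) is an \(M\times M\) rank-one matrix, the sum over \(T\) splits as the sum over \(R\) plus the sum over \(T\setminus R\). That splitting gives
\[
        A_T^\top A_T-A_R^\top A_R=\sum_{i\in T\setminus R}A_i^\top A_i=A_{T\setminus R}^\top A_{T\setminus R}.
\]
Positive semidefiniteness is immediate: for any \(v\in\R^M\),
\[
        v^\top A_{T\setminus R}^\top A_{T\setminus R}v=\norm{A_{T\setminus R}v}_2^2\ge0.
\]
The row ordering in the submatrices is irrelevant, since the Gram matrix is invariant under row permutations.

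For the singular value inequality, I would use the Rayleigh quotient \(\lambda_{\min}(B^\top B)=\min_{\norm{v}=1}\norm{Bv}^2\), which is consistent with the paper's convention \(\sigma_M(B)^2=\lambda_{\min}(B^\top B)\). For every unit vector \(v\),
\[
        \norm{A_Tv}^2=\norm{A_Rv}^2+\norm{A_{T\setminus R}v}^2\ge\norm{A_Rv}^2\ge\sigma_M(A_R)^2.
\]
Minimizing over \(v\) gives \(\sigma_M(A_T)^2\ge\sigma_M(A_R)^2\), and taking square roots finishes the argument.

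One convention needs a check: the case \(\rank(A_R)<M\), which includes the case \(|R|<M\) where \(A_R\) has fewer than \(M\) rows. Here the paper sets \(\sigma_M(A_R)=0\). This agrees with \(\lambda_{\min}(A_R^\top A_R)=0\), because \(A_R^\top A_R\) is \(M\times M\) with rank below \(M\). So the Rayleigh argument applies uniformly and the inequality holds trivially in this case.

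There is no real obstacle. The only point needing care is confirming that the paper's zero convention for rank-deficient or short matrices coincides with the eigenvalue definition, which it does.
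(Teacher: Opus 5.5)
Your proof is correct and follows the same route as the paper: the row decomposition gives the identity, and the positive semidefinite difference gives monotonicity of \(\lambda_{\min}\), followed by square roots; your Rayleigh-quotient step is exactly the Weyl monotonicity the paper invokes. Your check that the convention \(\sigma_M(A_R)=0\) for rank-deficient or short \(A_R\) agrees with \(\lambda_{\min}(A_R^\top A_R)=0\) is a welcome detail that the paper leaves implicit.
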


\begin{proof}
The displayed identity is immediate from the row decomposition.  Since the difference is
positive semidefinite, Weyl monotonicity gives
\(\lambda_{\min}(A_T^\top A_T)\ge\lambda_{\min}(A_R^\top A_R)\).  Taking square roots
gives the result.
\end{proof}

\begin{theorem}[Critical-threshold reduction]
Let \(A\in\R^{(2M-1)\times M}\) be full spark.  Then
\[
        \boxed{
        \omega(A)=L_A=\Delta_A
        =
        \min_{\substack{T\subset[2M-1]\\ |T|=M}}
        \smin(A_T).
        }
\]
Consequently,
\[
        \beta_A
        =
        \frac{\norm{A}_2}{\omega(A)}.
\]
\end{theorem}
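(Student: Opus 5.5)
Write \(\mu(A):=\min_{|T|=M}\smin(A_T)\). The plan is to show separately that \(\omega(A)=\mu(A)\) and \(\Delta_A=\mu(A)\), using only full spark, the Monotonicity Lemma, and the count \(N=2M-1\). The Balan--Wang formula then gives \(L_A=\Delta_A\) and \(\beta_A=\norm{A}_2/\Delta_A\), which yields the full chain and the condition-number statement.

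\textbf{Step 1: \(\omega(A)=\mu(A)\).} By full spark, \(\rank(A_S)<M\) holds exactly when \(|S|\le M-1\). In that case \(|S^c|\ge M\), so \(\sigma_M(A_{S^c})\) is at least \(\smin(A_T)\) for any \(M\)-subset \(T\subset S^c\), by the Monotonicity Lemma. Hence \(\omega(A)\ge\mu(A)\). Conversely, given any \(|T|=M\), set \(S=T^c\). Then \(|S|=M-1\), so \(\rank(A_S)<M\), and \(S^c=T\). This gives \(\omega(A)\le\smin(A_T)\) for every such \(T\), hence \(\omega(A)\le\mu(A)\).

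\textbf{Step 2: \(\Delta_A=\mu(A)\).} Fix \(I\subset[N]\). Since \(|I|+|I^c|=2M-1\), exactly one of \(I,I^c\) has at least \(M\) elements; call it \(J\). The other has at most \(M-1\) rows, so its Gram matrix has rank less than \(M\) and \(\lambda_{\min}\) equal to \(0\). The quantity inside the square root is therefore \(\lambda_{\min}(A_J^\top A_J)=\sigma_M(A_J)^2\). By the Monotonicity Lemma this is at least \(\mu(A)^2\), since \(J\) contains some \(M\)-subset. Thus \(\Delta_A\ge\mu(A)\). For the reverse, take \(I=T\) with \(|T|=M\) attaining \(\mu(A)\); the term is then exactly \(\smin(A_T)^2\). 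Note that full spark is not needed for Step 2, only the parity/count argument. Full spark is needed in Step 1 to identify the rank-deficient sets.

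\textbf{Step 3: conclusion.} By the Balan--Wang theorem, \(L_A=\Delta_A\) and \(U_A=\norm{A}_2\). Combining this with Steps 1 and 2 gives the boxed identity, and then \(\beta_A=\norm{A}_2/\omega(A)\). Here \(\omega(A)>0\) by full spark, so the convention \(\beta_A=+\infty\) does not arise.

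The only point needing care is the case analysis in Step 2: one must check that the empty or small side contributes exactly zero, including the convention \(\lambda_{\min}=0\) for rank-deficient or empty \(A_I\), so that no partition does better than a single square minor. Everything else is bookkeeping.
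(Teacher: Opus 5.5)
Your proof is correct and follows the paper's own argument: full spark identifies the rank-deficient sets as those with \(|S|\le M-1\), the count \(|I|+|I^c|=2M-1\) forces one side of every partition to contribute zero to \(\Delta_A\), and the monotonicity lemma reduces both minima to square \(M\times M\) minors. Your explicit two-sided inequalities and your remark that the identity \(\Delta_A=\min_{|T|=M}\smin(A_T)\) holds without full spark are minor refinements within the same route.
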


\begin{proof}
First consider \(\omega(A)\).  Since \(A\) is full spark,
\[
        \rank(A_S)<M
        \quad\Longleftrightarrow\quad
        |S|\le M-1.
\]
Therefore
\[
        \omega(A)
        =
        \min_{|S|\le M-1}\sigma_M(A_{S^c})
        =
        \min_{|T|\ge M}\sigma_M(A_T).
\]
By the monotonicity lemma, the latter minimum is attained among sets of cardinality
exactly \(M\). Hence
\[
        \omega(A)=\min_{|T|=M}\smin(A_T).
\]

Next, by the Balan--Wang formula,
\[
        L_A=\Delta_A
        =
        \min_{I\subset[2M-1]}
        \sqrt{
        \lambda_{\min}(A_I^\top A_I)
        +
        \lambda_{\min}(A_{I^c}^\top A_{I^c})
        }.
\]
For any partition \(I,I^c\) of \([2M-1]\), one side has at most \(M-1\) elements, and
therefore contributes zero to the displayed square root.  Thus
\[
        \Delta_A
        =
        \min_{|T|\ge M}\sigma_M(A_T)
        =
        \min_{|T|=M}\smin(A_T),
\]
again by monotonicity.  This proves all equalities.  Since \(U_A=\norm{A}_2\), the
identity for \(\beta_A\) follows.
\end{proof}

For Gaussian matrices, full spark holds almost surely.  Hence the critical-threshold
stability problem reduces to the lower extreme of the least singular values of all square
\(M\times M\) row submatrices.

\section{Gaussian counting setup}
\label{sec:counting}

Throughout the rest of the paper, let
\[
        A\in\R^{(2M-1)\times M}
\]
have iid \(N(0,1)\) entries.  Set
\[
        K_M:=\binom{2M-1}{M}.
\]
For \(T\subset[2M-1]\), \(|T|=M\), define
\[
        E_T(t)
        :=
        \left\{
        \smin(A_T)\le \frac{t}{\sqrt M}
        \right\},
        \qquad
        0<t<1,
\]
and
\[
        Z_t:=\sum_{|T|=M}\1_{E_T(t)}.
\]
Then
\[
        Z_t>0
        \quad\Longrightarrow\quad
        \omega(A)\le \frac{t}{\sqrt M}.
\]

For a fixed \(T\), \(A_T\) is an \(M\times M\) standard Gaussian matrix.  We will use the standard square least-singular-value estimate, recalled in Appendix~\ref{app:square-hard-edge},
\begin{equation}
        c_0 t
        \le
        \Pbb(E_T(t))
        \le
        C_0 t,
        \qquad 0<t<t_0,
        \label{eq:square-tail}
\end{equation}
with absolute constants \(c_0,C_0,t_0>0\).  In particular,
\[
        \Ebb Z_t=K_M\Pbb(E_T(t))\asymp K_M t.
\]
The scale suggested by the first moment is therefore
\[
        t\asymp K_M^{-1}.
\]

\section{Overlap decomposition of the second moment}

Fix \(T\subset[2M-1]\) with \(|T|=M\).  If \(U\) is another \(M\)-subset, write
\[
        s=|T\setminus U|=|U\setminus T|.
\]
Then \(s=0,1,\dots,M-1\).  For fixed \(T\), the number of \(U\)'s with this value of
\(s\) is
\[
        a_s:=\binom Ms\binom{M-1}{s}.
\]
For \(s\ge1\), define
\[
        q_{M,s}(t)
        :=
        \Pbb(E_T(t)\cap E_U(t)),
\]
where \(|T\setminus U|=s\).  By exchangeability, this depends only on \(M\) and \(s\).
Then
\begin{equation}
        \Ebb Z_t^2
        =
        K_M p_M(t)
        +
        K_M\sum_{s=1}^{M-1}a_s q_{M,s}(t),
        \label{eq:second-moment-profile}
\end{equation}
where
\[
        p_M(t):=\Pbb(E_T(t)).
\]
Thus the problem is to prove an aggregate estimate of the form
\begin{equation}
        \sum_{s=1}^{M-1}a_s q_{M,s}(t)
        \lesssim
        K_M t^2
        \label{eq:aggregate-target}
\end{equation}
for \(t\) of order \(K_M^{-1}\).

\section{Gaussian overlap estimate}

We now prove a quantitative pair-overlap estimate.  The proof isolates the only
non-elementary random matrix input: a hard-edge estimate for the smallest nonzero
singular value of a rectangular Gaussian matrix.

\subsection{Hard-edge input}

Let \(C\in\R^{(M-s)\times M}\) be a standard Gaussian matrix, and let
\[
        \mu_s(C):=\sigma_{M-s}(C)
\]
denote its smallest nonzero singular value.

\begin{lemma}[Rectangular hard-edge tail]
There are absolute constants \(C,c>0\) such that, for every \(1\le s\le M-1\) and
\(0<u\le c\),
\begin{equation}
        \Pbb\left\{
        \mu_s(C)
        \le
        u\frac{s}{\sqrt M}
        \right\}
        \le
        (Cu)^{s+1}.
        \label{eq:rect-tail}
\end{equation}
\end{lemma}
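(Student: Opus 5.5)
The plan is to reduce the bound to a tail estimate for the smallest eigenvalue of a real Wishart matrix, and to use the exact Gaussian density, which has no additive $e^{-cM}$ term. Write $n:=M-s$. Then $\mu_s(C)^2=\lambda_{\min}(W)$, where $W:=CC^\top$ is an $n\times n$ real Wishart matrix with $M$ degrees of freedom. The excess dimension is $M-n=s\ge1$, and the target event is $\{\lambda_{\min}(W)\le \delta\}$ with $\delta:=u^2s^2/M$.

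The Rudelson--Vershynin rectangular estimate gives the right shape, since $\sqrt M-\sqrt{n-1}\asymp (s+1)/\sqrt M$. However, it carries an additive $e^{-cM}$ term, which is fatal for $u$ exponentially small. So I would work directly with the Gaussian joint eigenvalue density instead.

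\textbf{Step 1 (density bound).} Integrate out the other $n-1$ eigenvalues in the joint density of the real Wishart ensemble. The aim is Edelman's type bound for the marginal density of $\lambda_{\min}$:
\[
        f_{\lambda_{\min}}(\lambda)\le \kappa_{n,M}\,\lambda^{(s-1)/2}e^{-\lambda/2},
        \qquad \lambda>0.
\]
Concretely, factor $\lambda^{(M-n-1)/2}=\lambda^{(s-1)/2}$ off from the vanishing eigenvalue. Bound the Vandermonde factors $\prod_{j}|\lambda_j-\lambda|\le\prod_j\lambda_j$ on the event that $\lambda$ is the minimum. The remaining integral is then a real Wishart normalization with parameters $(n-1,M+1)$. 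This yields $\kappa_{n,M}$ as an explicit ratio of multivariate Gamma constants.

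\textbf{Step 2 (integration).} Integrate the density bound from Step 1:
\[
        \Pbb\{\lambda_{\min}(W)\le\delta\}\le \kappa_{n,M}\,\frac{2}{s+1}\,\delta^{(s+1)/2}.
\]

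\textbf{Step 3 (constant asymptotics).} Estimate $\kappa_{n,M}$ uniformly in $1\le s\le M-1$ via Stirling's formula for the Gamma ratios. The target is
\[
        \kappa_{n,M}\tfrac{2}{s+1}\le \Bigl(\tfrac{C M}{s^2}\Bigr)^{(s+1)/2}.
\]
This is the natural scaling, since $\lambda_{\min}\sim s^2/M$ at the hard edge. Substituting $\delta=u^2s^2/M$ then gives $(C u^2)^{(s+1)/2}=(C'u)^{s+1}$, which is \eqref{eq:rect-tail}. The restriction $u\le c$ only serves to keep this below $1$ and to absorb lower-order factors.

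\textbf{Main obstacle.} I expect Step 3 to be the hardest: obtaining the uniform Gamma-ratio bound across both regimes $s=O(1)$ and $s\asymp M$ without losing factors such as $\sqrt s$ or $n$ per power. A cheap route, bounding $\mu_s(C)\ge \min_i d_i/\sqrt n$ with $d_i$ the distance from row $i$ to the span of the others (so $d_i^2\sim\chi^2_{s+1}$), loses exactly such a factor $(\sqrt s)^{s+1}$ and a union factor $n$. That is why I would commit to the density route.

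If the Stirling bookkeeping becomes messy near $s\asymp M$, my fallback would be to cite the known sharp Gaussian bound (Edelman; Chen--Dongarra), namely $\Pbb\{\sigma_{\min}\le \eps(\sqrt M-\sqrt{n-1})\}\le (C\eps)^{s+1}$ with no additive term. I would combine it with $\sqrt M-\sqrt{n-1}\ge (s+1)/(2\sqrt M)$ to get \eqref{eq:rect-tail}. This is presumably what Appendix~\ref{app:square-hard-edge} records alongside the square case.
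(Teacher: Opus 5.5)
Your proposal is correct and is essentially the paper's proof. The paper reduces to $\lambda_{\min}(CC^\top)$ and bounds the coupling factors by $\lambda_j-\lambda\le\lambda_j$. It then enlarges the domain, so that the leftover integral is the $(n-1,M+1)$ Laguerre partition function, integrates $t^{(s-1)/2}$, and bounds the resulting Gamma-ratio constant.

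Your feared Step 3 turns out to be routine. The Laguerre--Selberg evaluation and the duplication formula make the constant exactly $\frac{n\,2^{(s-1)/2}}{s+1}\frac{\Gamma((M+1)/2)}{\Gamma((n+2)/2)\,\Gamma(s+1)}$. Then $\Gamma((M+1)/2)/\Gamma((n+2)/2)\le(CM)^{(s-1)/2}$ together with $s!\ge(s/e)^s$ gives $(C\sqrt M/s)^{s+1}$ uniformly in $1\le s\le M-1$, so the fallback citation is unnecessary.
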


\begin{remark}
Estimate \eqref{eq:rect-tail} is proved directly for Gaussian matrices in
Appendix~\ref{app:rectangular-hard-edge} by integrating the real Wishart joint
density.  Its scale and hard-edge codimension exponent \(s+1\) agree with the
general rectangular smallest-singular-value estimates of Rudelson and Vershynin
\cite{RudelsonVershynin2009}.  The latter citation is included for comparison;
the exact pure-power estimate used here is supplied by the Gaussian derivation
in the appendix.
\end{remark}

The following inverse-moment consequence is the form used below.

\begin{lemma}[Truncated inverse moment]
Let \(C\in\R^{(M-s)\times M}\) be standard Gaussian and let
\(\mu=\mu_s(C)\).  There is an absolute constant \(C_1\) such that, for
\(2\le s\le M-1\) and all \(0<a\le c s/\sqrt M\),
\begin{equation}
        \Ebb\min\left\{1,\frac{a^2}{\mu^2}\right\}
        \le
        C_1 a^2\frac{M}{s^2}.
        \label{eq:inverse-moment}
\end{equation}
For \(s=1\), one has the logarithmic bound
\begin{equation}
        \Ebb\min\left\{1,\frac{a^2}{\mu^2}\right\}
        \le
        C_1 a^2 M\log\left(\frac{e}{a\sqrt M}\right)
        \label{eq:inverse-moment-s1}
\end{equation}
whenever \(0<a\le c/\sqrt M\).
\end{lemma}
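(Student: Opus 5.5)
The plan is to use the layer-cake formula together with the rectangular hard-edge tail \eqref{eq:rect-tail}. Write \(Y=\min\{1,a^2/\mu^2\}\). Since \(0\le Y\le1\),
\[
        \Ebb Y=\int_0^1\Pbb\{Y>v\}\,dv=\int_0^1\Pbb\{\mu<a v^{-1/2}\}\,dv .
\]
Substituting \(\mu\)-threshold \(r=a v^{-1/2}\), so that \(v=a^2/r^2\) and \(dv=2a^2r^{-3}\,dr\), this becomes
\[
        \Ebb Y=2a^2\int_a^\infty \Pbb\{\mu<r\}\,r^{-3}\,dr .
\]
Let \(r_0:=cs/\sqrt M\) be the largest threshold covered by \eqref{eq:rect-tail}. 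The hypothesis \(a\le r_0\) makes the range \(r\in[a,r_0]\) nonempty. We split the integral at \(r_0\).

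\textbf{Tail piece \(r\ge r_0\).} Bound the probability by \(1\):
\[
        2a^2\int_{r_0}^\infty r^{-3}\,dr=\frac{a^2}{r_0^2}=\frac{a^2M}{c^2s^2}.
\]
This already has the target form \(C_1a^2M/s^2\), and it is valid for every \(s\ge1\).

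\textbf{Core piece \(a\le r\le r_0\).} Here \(r=u\,s/\sqrt M\) with \(u=r\sqrt M/s\le c\), so \eqref{eq:rect-tail} gives \(\Pbb\{\mu<r\}\le (C r\sqrt M/s)^{s+1}\). The core piece is therefore at most
\[
        2a^2\Bigl(\frac{C\sqrt M}{s}\Bigr)^{s+1}\int_a^{r_0} r^{s-2}\,dr .
\]

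\emph{Case \(s\ge2\).} The exponent \(s-2\ge0\), so
\[
        \int_a^{r_0} r^{s-2}\,dr\le \frac{r_0^{s-1}}{s-1}\le r_0^{s-1}
\]
(for \(s=2\) the integral is simply \(r_0-a\le r_0\)). The core piece is then at most
\[
        2a^2\Bigl(\frac{C\sqrt M}{s}\Bigr)^{s+1}\Bigl(\frac{cs}{\sqrt M}\Bigr)^{s-1}
        =2a^2(Cc)^{s-1}C^2\frac{M}{s^2}.
\]
This is uniformly \(\lesssim a^2M/s^2\) provided \(Cc\le1\). That can be ensured by shrinking \(c\), since \eqref{eq:rect-tail} remains valid for smaller \(c\) with the same \(C\).

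\emph{Case \(s=1\).} The tail is \((Cr\sqrt M)^2\) and the integrand is \(r^{-1}\), which produces
\[
        2a^2C^2M\log\frac{r_0}{a}=2C^2a^2M\log\frac{c}{a\sqrt M}.
\]
This is bounded by a constant times \(a^2M\log\bigl(e/(a\sqrt M)\bigr)\), using \(c\le e\) and \(a\sqrt M\le c\). Adding the tail piece, which contributes \(a^2M/c^2\), and absorbing it, since \(\log(e/(a\sqrt M))\ge1\), gives \eqref{eq:inverse-moment-s1}.

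The only delicate point is keeping the constant uniform in \(s\). The factor \((Cc)^{s-1}\) could otherwise grow geometrically in \(s\). This is handled by the normalization \(Cc\le1\) (equivalently, stating \eqref{eq:rect-tail} with \(c\le 1/C\)). Everything else is routine calculus.
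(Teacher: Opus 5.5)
Your proof is correct and is essentially the paper's argument. Both use the layer-cake identity \(\Ebb\min\{1,a^2/\mu^2\}=2a^2\int_a^\infty r^{-3}\Pbb\{\mu\le r\}\,dr\), split at the edge \(cs/\sqrt M\) of the hard-edge range, and normalize \(Cc\le1\) (the paper takes \(Cc\le 1/2\)) so that \((Cc)^{s-1}\) stays uniform in \(s\), while \(s=1\) yields the logarithm; the only difference is that the paper first rescales to \(X=\sqrt M\mu/s\) and \(b=a\sqrt M/s\), whereas you work directly in the unscaled variable.
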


\begin{proof}
We prove the case \(s\ge2\); the case \(s=1\) is identical except that the integral
\(\int_x^1 u^{-1}\,du\) produces a logarithm.
Set
\[
        X:=\frac{\sqrt M}{s}\mu.
\]
By \eqref{eq:rect-tail}, \(\Pbb\{X\le u\}\le (Cu)^{s+1}\) for \(0<u\le c\); decrease \(c\) so that \(Cc\le1/2\). Put
\(b:=a\sqrt M/s\).  Then
\[
        \min\left\{1,\frac{a^2}{\mu^2}\right\}
        =
        \min\left\{1,\frac{b^2}{X^2}\right\}.
\]
Using integration by parts for the decreasing function \(u\mapsto\min\{1,b^2/u^2\}\),
one obtains
\[
        \Ebb\min\left\{1,\frac{b^2}{X^2}\right\}
        =
        2b^2\int_b^\infty u^{-3}\Pbb\{X\le u\}\,du.
\]
The part \(u\le c\) is bounded by
\[
        2b^2 C^{s+1}\int_b^c u^{s-2}\,du
        \le \frac{2b^2 C^2(Cc)^{s-1}}{s-1}
        \lesssim b^2,
\]
uniformly for \(s\ge2\).  The part \(u>c\) is bounded by \(Cb^2\).  Hence the expectation is
\(O(b^2)=O(a^2M/s^2)\), proving \eqref{eq:inverse-moment}.
\end{proof}

\subsection{Block reduction for two overlapping minors}

Fix \(T,U\subset[2M-1]\) with \(|T|=|U|=M\) and
\(|T\setminus U|=s\).  Let
\[
        R=T\cap U,
        \qquad
        B=T\setminus U,
        \qquad
        D=U\setminus T.
\]
Then
\[
        |R|=M-s,
        \qquad
        |B|=|D|=s.
\]
Condition on the shared block
\[
        C:=A_R\in\R^{(M-s)\times M}.
\]
Given \(C\), the blocks \(A_B\) and \(A_D\) are independent standard Gaussian matrices.
Therefore
\begin{equation}
        q_{M,s}(t)
        =
        \Ebb_C\,q_C(t)^2,
        \label{eq:qCsquare}
\end{equation}
where
\[
        q_C(t)
        :=
        \Pbb_G\left\{
        \smin\begin{pmatrix}C\\G\end{pmatrix}
        \le
        \frac{t}{\sqrt M}
        \right\},
\]
and \(G\in\R^{s\times M}\) is standard Gaussian.

By rotational invariance, for almost every \(C\) there exists \(Q\in O(M)\) such that
\[
        CQ=\begin{pmatrix}S&0\end{pmatrix},
\]
where \(S\in\R^{(M-s)\times(M-s)}\) is invertible and
\(\smin(S)=\mu_s(C)\).  Since \(GQ\) is again standard Gaussian, write
\[
        GQ=\begin{pmatrix}G_1&G_2\end{pmatrix},
\]
where
\[
        G_1\in\R^{s\times(M-s)},
        \qquad
        G_2\in\R^{s\times s}
\]
are independent Gaussian blocks.  Thus
\[
        \smin\begin{pmatrix}C\\G\end{pmatrix}
        =
        \smin
        \begin{pmatrix}
        S&0\\G_1&G_2
        \end{pmatrix}.
\]

\begin{lemma}[Block inverse implication]
Let
\[
        L=
        \begin{pmatrix}
        S&0\\G_1&G_2
        \end{pmatrix},
\]
where \(S\) and \(G_2\) are square and invertible.  Set
\[
        \mu=\smin(S),
        \qquad
        \nu=\smin(G_2),
        \qquad
        H=\norm{G_1}_2.
\]
If \(\smin(L)\le\eps\) and \(\mu>2\eps\), then
\begin{equation}
        \nu
        \le
        2\eps\left(1+\frac{H}{\mu}\right).
        \label{eq:block-implication}
\end{equation}
\end{lemma}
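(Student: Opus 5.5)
Since $L$ is square, $\smin(L)\le\eps$ gives a unit vector $y$ with $\norm{y^\top L}_2\le\eps$; equivalently $\smin(L^\top)=\smin(L)$. We work with a left (row) test vector. Write $y=(y_1,y_2)$ with $y_1\in\R^{M-s}$ and $y_2\in\R^s$. Then
\[
y^\top L=\bigl(y_1^\top S+y_2^\top G_1,\; y_2^\top G_2\bigr),
\]
so both components have norm at most $\eps$:
\[
\norm{y_1^\top S+y_2^\top G_1}\le\eps,\qquad \norm{y_2^\top G_2}\le\eps .
\]
The plan is first to show that $y_2$ carries a definite share of the mass, and then to read off a small value of $\smin(G_2)=\smin(G_2^\top)$.

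From the first inequality,
\[
\mu\norm{y_1}\le\norm{y_1^\top S}\le \eps+H\norm{y_2}.
\]
Combining this with $\norm{y_1}+\norm{y_2}\ge\norm{y}=1$, we get $\mu(1-\norm{y_2})\le\eps+H\norm{y_2}$. Hence
\[
\norm{y_2}\ge\frac{\mu-\eps}{\mu+H}.
\]
The hypothesis $\mu>2\eps$ guarantees $\mu-\eps>\mu/2>0$, so $\norm{y_2}\ge \frac{\mu}{2(\mu+H)}>0$.

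Now set $z=y_2/\norm{y_2}$. Then
\[
\nu\le\norm{z^\top G_2}\le\frac{\eps}{\norm{y_2}}\le \frac{2\eps(\mu+H)}{\mu}=2\eps\Bigl(1+\frac{H}{\mu}\Bigr),
\]
which is \eqref{eq:block-implication}. Here we used that $\nu=\smin(G_2)$ equals the minimum of $\norm{z^\top G_2}$ over unit $z$, since $G_2$ is square.

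The only delicate point is the mass-splitting step. One must use the left singular vector, because the zero block sits in the upper-right corner. With a right vector $x=(x_1,x_2)$, the top block reads $Sx_1$ and $x_2$ is unconstrained there, so the argument would have to go through $G_1x_1+G_2x_2$ instead. That would still work symmetrically: $\norm{x_1}\le\eps/\mu$, so $\norm{x_2}\ge1-\eps/\mu\ge1/2$ and $\nu\le2(\eps+H\eps/\mu)$. This route gives the same bound and could serve as an alternative proof.
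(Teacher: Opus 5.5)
Your proof is correct, but it takes a different route from the paper's. The paper never chooses a test vector. It writes down the block-triangular inverse
\[
L^{-1}=\begin{pmatrix}S^{-1}&0\\-G_2^{-1}G_1S^{-1}&G_2^{-1}\end{pmatrix},
\]
and bounds $\norm{L^{-1}}\le\mu^{-1}+\nu^{-1}(1+H/\mu)$ by the triangle inequality over the two block rows. It then combines $\norm{L^{-1}}=\smin(L)^{-1}\ge\eps^{-1}$ with $\mu^{-1}<1/(2\eps)$.

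You instead use the variational characterization of $\smin$ through a left approximate null vector, followed by a mass-splitting step. This needs no inverse formula and never uses invertibility of $G_2$, although that gain is cosmetic, since the conclusion is trivial when $G_2$ is singular. Both arguments in fact give the same intermediate bound $\nu\le\eps(\mu+H)/(\mu-\eps)$ before $\mu>2\eps$ is used to simplify.

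What the paper's route buys is structural. It exhibits the lower block $G_2^{-1}K$ with $K=(-G_1S^{-1},I_s)$. That is exactly the object reused later in the sandwich $\norm{G_2^{-1}K}\le\norm{L^{-1}}\le\norm{G_2^{-1}K}+\norm{S^{-1}}$ in the conditional linearization.

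Your remark that one ``must'' use a left vector is not right, as your own last paragraph shows. The right-vector version is valid, and it is essentially the paper's computation evaluated on the single vector $x=L^{-1}(Lx)$. That is just back-substitution: $x_1=S^{-1}(Lx)_1$ and $G_2x_2=(Lx)_2-G_1x_1$.
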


\begin{proof}
The inverse is
\[
        L^{-1}
        =
        \begin{pmatrix}
        S^{-1}&0\\
        -G_2^{-1}G_1S^{-1}&G_2^{-1}
        \end{pmatrix}.
\]
Viewing \(L^{-1}\) as the vertical concatenation of its two block rows and using
the triangle inequality gives the exact estimate
\begin{align*}
        \norm{L^{-1}}
        &\le
        \norm{(S^{-1},0)}
        +
        \norm{G_2^{-1}(-G_1S^{-1},I_s)}
        \\
        &\le
        \mu^{-1}
        +
        \nu^{-1}\left(1+\frac{H}{\mu}\right).
\end{align*}
If \(\smin(L)\le\eps\), then \(\norm{L^{-1}}\ge\eps^{-1}\).  Since \(\mu>2\eps\),
\(\mu^{-1}<1/(2\eps)\), and therefore
\[
        \frac{1}{2\eps}
        \le
        \nu^{-1}\left(1+\frac{H}{\mu}\right).
\]
Rearranging gives \eqref{eq:block-implication}.
\end{proof}

\begin{lemma}[Conditional small-ball estimate]
There is an absolute constant \(C\) such that, for every full-row-rank shared block \(C\),
with \(\mu=\mu_s(C)\),
\begin{equation}
        q_C(t)
        \le
        C\min\left\{
        1,
        t\sqrt{\frac{s}{M}}
        \left(1+\frac{\sqrt M}{\mu}\right)
        \right\}.
        \label{eq:conditional-qC}
\end{equation}
\end{lemma}

\begin{proof}
Set \(\eps=t/\sqrt M\).  By the block inverse implication,
\[
        \{\smin(L)\le\eps\}
        \subset
        \{\mu\le2\eps\}
        \cup
        \left\{
        \smin(G_2)
        \le
        C\eps\left(1+\frac{\norm{G_1}}{\mu}\right)
        \right\}.
\]
The square Gaussian least-singular-value upper tail gives
\[
        \Pbb\{\smin(G_2)\le u\}\le C\sqrt s\,u
\]
for \(G_2\in\R^{s\times s}\).  Conditioning on \(G_1\) and using
\(\Ebb\norm{G_1}\le C\sqrt M\), we obtain
\[
        q_C(t)
        \le
        \1_{\{\mu\le2t/\sqrt M\}}
        +
        C\frac{t\sqrt s}{\sqrt M}
        \left(1+\frac{\sqrt M}{\mu}\right).
\]
The right side is bounded by a constant multiple of the truncated expression in
\eqref{eq:conditional-qC}; when \(\mu\le2t/\sqrt M\), the second term is already larger
than a positive absolute constant after truncation.  This proves the claim.
\end{proof}

\begin{proposition}[Overlap profile bound]
\label{prop:overlap-profile-bound}
There is an absolute constant \(C\) such that, for \(2\le s\le M-1\) and
\(0<t\le c\sqrt{s/M}\),
\begin{equation}
        q_{M,s}(t)
        \le
        C t^2\frac{M}{s}.
        \label{eq:qMs-main}
\end{equation}
For \(s=1\) and \(0<t\le c/\sqrt M\),
\begin{equation}
        q_{M,1}(t)
        \le
        C t^2 M\log\left(\frac{e}{t\sqrt M}\right).
        \label{eq:qMs-s1}
\end{equation}
\end{proposition}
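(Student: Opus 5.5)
We combine the squaring identity \eqref{eq:qCsquare} with the conditional small-ball estimate \eqref{eq:conditional-qC}, and then average over the shared block \(C\) using the truncated inverse moment of \(\mu=\mu_s(C)\).

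Starting from \(q_{M,s}(t)=\Ebb_C\,q_C(t)^2\), we square \eqref{eq:conditional-qC}. Since \(\min\{1,x\}^2\le\min\{1,x^2\}\), this gives
\[
        q_C(t)^2
        \le
        C\min\Bigl\{1,\;t^2\frac{s}{M}\Bigl(1+\frac{\sqrt M}{\mu}\Bigr)^2\Bigr\}.
\]
We then use \((1+x)^2\le 2+2x^2\) together with \(\min\{1,y+z\}\le y+\min\{1,z\}\). This splits the bound into two pieces:
\[
        q_C(t)^2
        \le
        2C\,t^2\frac{s}{M}
        +
        2C\min\Bigl\{1,\frac{t^2 s}{\mu^2}\Bigr\}.
\]
The first term has \(C\)-independent expectation \(2Ct^2 s/M\). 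Because \(s\le M\), this is at most \(2Ct^2M/s\).

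For the second term we set \(a:=t\sqrt s\), so the term is \(\min\{1,a^2/\mu^2\}\). The condition needed for the truncated inverse moment lemma is \(a\le cs/\sqrt M\), which is equivalent to \(t\le c\sqrt{s/M}\). This is exactly the hypothesis of the proposition, after possibly shrinking the absolute constant \(c\) to match the one in that lemma.

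\begin{itemize}
\item For \(s\ge2\), \eqref{eq:inverse-moment} gives
\(\Ebb\min\{1,a^2/\mu^2\}\le C_1 a^2 M/s^2 = C_1 t^2 M/s\).
Adding the first term yields \eqref{eq:qMs-main}.
\item For \(s=1\) we have \(a=t\), and the condition \(t\le c/\sqrt M\) is exactly the hypothesis of \eqref{eq:inverse-moment-s1}. That bound gives \(C_1t^2M\log(e/(t\sqrt M))\). The first term \(2Ct^2/M\) is dominated by this, since the logarithm is at least \(1\) in this range. This yields \eqref{eq:qMs-s1}.
\end{itemize}

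Most of the work is bookkeeping of absolute constants. One point needs care: \eqref{eq:conditional-qC} must hold for every full-row-rank \(C\), including when \(\mu\le 2t/\sqrt M\). There the lemma already truncates at \(1\), and our bound only uses the truncated form, so no separate treatment of the event \(\{\mu\le 2t/\sqrt M\}\) is needed.

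The main obstacle would be the case \(s\ge2\) if we averaged \(1/\mu^2\) without truncation. That expectation can diverge for small \(s\), because the tail exponent \(s+1\) only just beats the \(u^{-3}\) weight. This is why the truncated minimum must be kept through the averaging step, rather than expanding the square first and taking the minimum afterwards.
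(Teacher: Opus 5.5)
Your proposal is correct and follows essentially the same route as the paper: square the conditional small-ball bound, split off the \(\mu\)-free term \(t^2 s/M\le t^2M/s\), and apply the truncated inverse-moment lemma with \(a=t\sqrt s\). Your explicit inequality \(\min\{1,y+z\}\le y+\min\{1,z\}\) simply spells out the splitting step the paper leaves implicit.

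One correction to your closing remark: for \(s\ge2\) the untruncated moment \(\Ebb\mu^{-2}\) is finite and already \(O(M/s^2)\), since \(\int_0^c u^{s-2}\,du\) converges. So the truncation is genuinely needed only at \(s=1\), where it produces the logarithm.
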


\begin{proof}
From \eqref{eq:qCsquare} and \eqref{eq:conditional-qC},
\[
        q_{M,s}(t)
        \le
        C\,\Ebb_C
        \min\left\{
        1,
        t^2\frac{s}{M}
        \left(1+\frac{M}{\mu^2}\right)
        \right\}.
\]
The term without \(\mu\) is bounded by \(Ct^2s/M\), which is at most
\(Ct^2M/s\).  The singular term is bounded by
\[
        C\,\Ebb_C
        \min\left\{
        1,
        \frac{t^2s}{\mu^2}
        \right\}.
\]
Apply the truncated inverse-moment lemma with \(a=t\sqrt s\).  The hypothesis
\(t\le c\sqrt{s/M}\) is exactly the required condition
\(a\le cs/\sqrt M\).  For \(s\ge2\), this gives
\[
        \Ebb_C
        \min\left\{1,\frac{t^2s}{\mu^2}\right\}
        \le
        C(t^2s)\frac{M}{s^2}
        =
        C t^2\frac{M}{s},
\]
which proves \eqref{eq:qMs-main}.  The case \(s=1\) follows from
\eqref{eq:inverse-moment-s1} and gives the logarithmic estimate \eqref{eq:qMs-s1}.
\end{proof}

\section{Combinatorial summation}

We now show that the overlap-profile losses are absorbed by the number of pairs at each
overlap level.

\begin{lemma}[Overlap summation]
There is an absolute constant \(C\) such that
\begin{equation}
        \sum_{s=1}^{M-1}\binom Ms\binom{M-1}s\frac{M}{s}
        \le
        C\binom{2M-1}{M}
        =C K_M.
        \label{eq:comb-sum}
\end{equation}
Moreover, for \(t=\lambda/K_M\) with fixed \(\lambda>0\), the exceptional logarithmic
\(s=1\) contribution satisfies, for all sufficiently large \(M\),
\begin{equation}
        \binom M1\binom{M-1}1\,t^2 M
        \log\left(\frac{e}{t\sqrt M}\right)
        \le
        C K_M t^2.
        \label{eq:s1-negligible}
\end{equation}
\end{lemma}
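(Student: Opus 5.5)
Both inequalities come down to ratios of binomial coefficients. For \eqref{eq:comb-sum}, the plan is to absorb the factor \(M/s\) into the binomials and then apply Vandermonde's identity. The identity
\[
        \frac{M}{s}\binom{M}{s}=\binom{M-1}{s-1}\frac{M^2}{s^2},
\]
is awkward because of the leftover \(M^2/s^2\), so we use a cleaner route instead: \(\frac{M}{s}\binom Ms=\binom{M}{s}\cdot\frac Ms\) and \(\binom{M-1}{s}=\frac{M-s}{s}\binom{M-1}{s-1}\). A simpler choice is
\[
        \frac{M}{s}\binom{M-1}{s}\le \frac{M}{s}\binom{M-1}{s}
        =\frac{M}{M}\cdot\frac{M}{s}\binom{M-1}{s}
        =\binom{M}{s}\cdot\frac{M-s}{s}\cdot\frac{1}{1}\cdot\frac{M}{M}\cdots
\]
Rather than pushing these rewritings further, we split into two regimes. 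For \(s\ge M/2\), we have \(M/s\le 2\), so this part is at most \(2\sum_s\binom Ms\binom{M-1}{s}\). By Vandermonde,
\[
        \sum_{s\ge0}\binom Ms\binom{M-1}{s}=\sum_s\binom Ms\binom{M-1}{M-1-s}=\binom{2M-1}{M-1}=K_M .
\]
For \(s<M/2\), we use the exact identity \(\frac{M}{s}\binom{M-1}{s}\cdot\frac{s}{M}=\binom{M-1}{s}\). We bound termwise by comparing with the central term: the ratio
\[
        r_s:=\frac{\binom Ms\binom{M-1}s}{\binom{M}{s+1}\binom{M-1}{s+1}}=\frac{(s+1)^2}{(M-s)(M-1-s)}
\]
is at most a constant \(<1\) only when \(s\le \theta M\). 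So for \(s\le M/4\), say, the terms decay geometrically as \(s\) decreases, with ratio at most about \(1/9\) per step. Meanwhile \(M/s\) grows by only a factor \((s+1)/s\le2\) per step. Hence \(\binom Ms\binom{M-1}s\frac Ms\) decreases geometrically (ratio at most \(2/9\)) as \(s\) goes down from \(\lfloor M/4\rfloor\). The sum over \(s\le M/4\) is therefore at most a constant times its top term, which is \(O(1)\cdot\binom{M}{s_0}\binom{M-1}{s_0}\le O(1)\,K_M\) by Vandermonde. For \(M/4<s<M/2\), \(M/s<4\), and Vandermonde again gives a bound of \(4K_M\). Adding the three regimes yields \eqref{eq:comb-sum} with an absolute constant such as \(C=10\) (small \(M\) is absorbed into \(C\)).

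For \eqref{eq:s1-negligible}, we substitute \(t=\lambda/K_M\). After cancelling \(t^2\), the claim becomes
\[
        M^2(M-1)\log\left(\frac{eK_M}{\lambda\sqrt M}\right)\le C K_M .
\]
The left side is \(O(M^3\cdot M)=O(M^4)\), since \(\log K_M\le (2M-1)\log 2\). The right side grows like \(4^M/\sqrt{M}\), which is exponential. The inequality therefore holds for all sufficiently large \(M\), depending on \(\lambda\) only through the harmless \(\log(1/\lambda)\) term. Any \(C\ge1\) works once \(M\ge M_0(\lambda)\). We also note that \(t\le c/\sqrt M\) holds for large \(M\), so Proposition~\ref{prop:overlap-profile-bound} applies.

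The only delicate step is the geometric-decay bookkeeping near small \(s\), where the weight \(M/s\) is largest. The key observation is that the binomial ratio \(r_s\approx s^2/M^2\) beats the growth factor of \(M/s\) with plenty of room. The borderline term is \(s=1\): it contributes \(M\cdot(M-1)\cdot M=O(M^3)\), which is negligible against \(K_M\).
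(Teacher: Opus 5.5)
Your proof is correct, and its overall structure matches the paper's. You split the range of $s$ at $M/4$. Above the split, $M/s$ is bounded, so Vandermonde's identity $\sum_s\binom Ms\binom{M-1}s=K_M$ gives $O(K_M)$; the paper phrases this as total probability for the hypergeometric variable $S=|T\setminus U|$. You dispose of \eqref{eq:s1-negligible} exactly as the paper does, by comparing a polynomial in $M$ with $K_M\asymp 4^M/\sqrt M$.

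The genuine difference is how you treat small $s$. The paper writes the sum as $K_M\,\Ebb[(M/S)\1_{S\ge1}]$ and bounds $M/S\le M$ crudely. It then appeals to a hypergeometric Chernoff bound, which makes $\Pbb\{1\le S<M/4\}$ exponentially small, enough to absorb the factor $M$. You instead use the exact ratio $r_s=(s+1)^2/\bigl((M-s)(M-1-s)\bigr)\le 1/9$ for $s+1\le M/4$. Combined with $(s+1)/s\le2$, this makes the weighted terms decay geometrically, with ratio $2/9$, as $s$ decreases. So the whole tail is at most $\tfrac97$ times the term at $s_0=\lfloor M/4\rfloor$. That term is at most $(M/s_0)K_M\le 7K_M$ for $M\ge4$, because a single Vandermonde summand is at most $K_M$.

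Your route is entirely elementary and needs no concentration inequality. It also handles the weight $M/s$ directly rather than through the lossy bound $M/S\le M$. The paper's route is shorter, and it reuses the hypergeometric concentration needed anyway in Section~\ref{sec:high-probability}. There the weighted noncentral mass on both sides of $[\eta M,(1-\eta)M]$ must tend to zero, not merely stay bounded. Your comparison, which ends with the crude bound of one Vandermonde summand by $K_M$, would have to be pushed toward the center to give that.

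Two cosmetic points. The abandoned algebraic rewritings at the start of your first paragraph should be deleted. Your bookkeeping as written yields a constant of about $15$ rather than the quoted $10$, which is immaterial for the lemma.
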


\begin{proof}
Let \(U\) be uniformly distributed over all \(M\)-subsets of \([2M-1]\), with \(T\) fixed.
Then
\[
        \Pbb\{|T\setminus U|=s\}
        =
        \frac{\binom Ms\binom{M-1}s}{K_M}.
\]
Thus the left side of \eqref{eq:comb-sum} equals
\[
        K_M\,\Ebb\left[\frac{M}{S}\1_{S\ge1}\right],
        \qquad S:=|T\setminus U|.
\]
On \(\{S\ge M/4\}\), \(M/S\le4\).  On \(\{1\le S<M/4\}\), the hypergeometric lower-tail
probability is exponentially small in \(M\), while \(M/S\le M\).  Hence
\[
        \Ebb\left[\frac{M}{S}\1_{S\ge1}\right]\le C.
\]
This proves \eqref{eq:comb-sum}.  For \eqref{eq:s1-negligible}, observe that the left side
is
\[
        O\left(M^3t^2\log K_M\right),
\]
whereas \(K_Mt^2\) has an exponentially larger coefficient in \(M\) because
\(K_M\asymp 4^M/\sqrt M\).  Therefore the inequality holds for all sufficiently large
\(M\).
\end{proof}

\section{Near-sharp Gaussian theorem}
\label{sec:positive}

\begin{theorem}[Positive-probability Gaussian instability]
There exist absolute constants \(C<\infty\), \(p_0>0\), and \(M_0\) such that for all
\(M\ge M_0\), if
\[
        A\in\R^{(2M-1)\times M}
\]
has iid standard Gaussian entries, then
\begin{equation}
        \Pbb\left\{
        \omega(A)
        \le
        \frac{C}{\sqrt M\binom{2M-1}{M}}
        \right\}
        \ge
        p_0.
        \label{eq:gaussian-theorem-omega}
\end{equation}
Consequently, after increasing \(C\) and decreasing \(p_0\) if necessary,
\begin{equation}
        \Pbb\left\{
        \frac{\omega(A)}{\max_{k\in[2M-1]}\norm{A_k}_2}
        \le
        \frac{C}{M\binom{2M-1}{M}}
        \right\}
        \ge
        p_0.
        \label{eq:gaussian-theorem-normalized}
\end{equation}
\end{theorem}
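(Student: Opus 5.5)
The plan is a second-moment (Paley--Zygmund) argument applied to the count \(Z_t\) at the scale \(t=\lambda/K_M\), with \(\lambda\) a fixed constant chosen below. Paley--Zygmund gives
\[
        \Pbb\{Z_t>0\}\ge\frac{(\Ebb Z_t)^2}{\Ebb Z_t^2}.
\]
By \eqref{eq:square-tail}, for \(M\) large (so that \(t<t_0\)) we have \(\Ebb Z_t=K_Mp_M(t)\ge c_0\lambda\). Therefore the whole task is to bound \(\Ebb Z_t^2\) by a constant depending only on \(\lambda\). Once \(Z_t>0\) holds, the counting setup gives \(\omega(A)\le t/\sqrt M=\lambda/(\sqrt M K_M)\), which is \eqref{eq:gaussian-theorem-omega} with \(C=\lambda\).

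For the second moment I would use the profile \eqref{eq:second-moment-profile}. The diagonal term satisfies \(K_Mp_M(t)\le C_0\lambda\).

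\begin{itemize}
\item For \(s\ge2\), Proposition~\ref{prop:overlap-profile-bound} applies because its hypothesis \(t\le c\sqrt{s/M}\) holds trivially: \(t=\lambda/K_M\) is exponentially small. It gives \(a_sq_{M,s}(t)\le Ct^2a_sM/s\). Summing with \eqref{eq:comb-sum} bounds this part of the sum by \(CK_Mt^2\).
\item For \(s=1\), the hypothesis \(t\le c/\sqrt M\) also holds, and \eqref{eq:qMs-s1} together with \eqref{eq:s1-negligible} gives a contribution of at most \(CK_Mt^2\).
\end{itemize}

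Multiplying by the outer factor \(K_M\), the off-diagonal part is \(\le CK_M^2t^2=C\lambda^2\). Hence \(\Ebb Z_t^2\le C_0\lambda+C\lambda^2\), and
\[
        \Pbb\{Z_t>0\}\ge\frac{c_0^2\lambda^2}{C_0\lambda+C\lambda^2}=:p_0>0
\]
for, say, \(\lambda=1\). The step needing care is checking that all the hypotheses of the overlap estimates hold uniformly in \(s\) at this tiny scale of \(t\). The real difficulty, the \(M/s\) loss, is already absorbed by the hypergeometric summation lemma, so given the earlier results this is bookkeeping.

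For \eqref{eq:gaussian-theorem-normalized}, I would intersect with a high-probability lower bound on the row norms. Each row \(A_k\) is a standard Gaussian vector in \(\R^M\), so \(\max_k\norm{A_k}_2\ge\norm{A_1}_2\ge\sqrt M/2\) with probability at least \(1-e^{-cM}\) by chi-square concentration. On the intersection of the two events,
\[
        \frac{\omega(A)}{\max_k\norm{A_k}_2}\le\frac{2\lambda}{M K_M}.
\]
The intersection has probability at least \(p_0-e^{-cM}\ge p_0/2\) for \(M\ge M_0\). The constants are then adjusted by replacing \(C\) with \(2C\) and \(p_0\) with \(p_0/2\).
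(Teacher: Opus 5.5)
Your proposal is correct and is essentially the paper's proof. Both apply Paley--Zygmund to $Z_t$ at $t=\lambda/K_M$, bound $\Ebb Z_t^2\le C(\lambda+\lambda^2)$ using the overlap profile bound, the hypergeometric summation lemma and the separate logarithmic $s=1$ estimate, and obtain the normalized statement by intersecting with a $\sqrt M$ lower bound on the row norms; your observation that any fixed $\lambda>0$ (e.g.\ $\lambda=1$) already yields a positive $p_0$ is also correct, since the paper takes $\lambda$ large only to improve the constant.
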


\begin{proof}
Let
\[
        t=\frac{\lambda}{K_M},
\]
where \(\lambda>0\) is a fixed constant to be chosen large.  By the square
least-singular-value lower tail \eqref{eq:square-tail},
\[
        \Ebb Z_t
        =K_Mp_M(t)
        \ge
        c_0K_Mt
        =c_0\lambda.
\]
On the other hand, using \eqref{eq:second-moment-profile}, the overlap profile bound, and
the combinatorial summation lemma,
\[
        \Ebb Z_t^2
        \le
        C\left(K_Mt+K_M^2t^2\right)
        \le
        C(\lambda+\lambda^2).
\]
Therefore, by Paley--Zygmund,
\[
        \Pbb\{Z_t>0\}
        \ge
        \frac{(\Ebb Z_t)^2}{\Ebb Z_t^2}
        \ge
        \frac{c\lambda^2}{C(\lambda+\lambda^2)}.
\]
Choosing \(\lambda\) fixed and sufficiently large gives a positive lower bound
\(p_0>0\), independent of \(M\).  On \(\{Z_t>0\}\),
\[
        \omega(A)
        \le
        \frac{t}{\sqrt M}
        =
        \frac{\lambda}{K_M\sqrt M},
\]
which proves \eqref{eq:gaussian-theorem-omega}.

For the normalized statement, note that
\[
        \max_{k\in[2M-1]}\norm{A_k}_2\ge c\sqrt M
\]
with probability tending to one.  Intersecting this event with \eqref{eq:gaussian-theorem-omega}
and adjusting constants gives \eqref{eq:gaussian-theorem-normalized}.
\end{proof}

\section{High-probability theorem via central-overlap linearization}
\label{sec:high-probability}

The positive-probability theorem is obtained by bounding the non-centered second moment.
We now record a high-probability upgrade.  The proof uses the same overlap decomposition,
but strengthens the central-overlap estimate to asymptotic independence.

Throughout this section let
\[
        t_M:=\frac{w_M}{K_M},
        \qquad
        K_M=\binom{2M-1}{M},
\]
where
\[
        w_M\longrightarrow\infty,
        \qquad
        w_M=e^{o(M)}.
\]
The subexponential restriction is harmless for the intended applications; it allows, for
example, powers of \(M\) and powers of \(\log M\).

For two \(M\)-subsets \(T,U\), recall that
\[
        s=|T\setminus U|=|U\setminus T|,
        \qquad
        |T\cap U|=M-s.
\]
Fix once and for all a number \(\eta\in(0,1/2)\), and call an overlap central if
\[
        s\in\mathcal C_{\eta,M}:=\bigl\{s:\eta M\le s\le (1-\eta)M\bigr\}.
\]
The noncentral values of \(s\) have exponentially small combinatorial mass and will be
handled by the non-centered overlap bound from Proposition~\ref{prop:overlap-profile-bound}.

\subsection{A specialized Gaussian inverse-tail input}

We first isolate the only random matrix input beyond the standard least-singular-value
bounds already used above.  This is a specialized inverse-tail statement for a square
Gaussian matrix multiplied by a row-conditioned matrix.  It is a direct consequence of the
Gaussian singular-value decomposition, the independence of singular vectors and singular
values, and the hard-edge expansion for the least singular value.

Let \(X_s\in\R^{s\times s}\) be standard Gaussian.  Write
\[
        \rho_s:=\sigma_{\min}(X_s).
\]
Let \(\gamma_s\) denote the density-at-zero coefficient for \(\rho_s\), namely
\begin{equation}
        \Pbb\{\rho_s\le r\}
        =
        \gamma_s r+o(r),
        \qquad r\downarrow0.
        \label{eq:gamma-s-def}
\end{equation}
For real Gaussian matrices,
\begin{equation}
        \frac{\gamma_s}{\sqrt{s}}
        \longrightarrow
        \gamma_\ast=1.
        \label{eq:gamma-limit}
\end{equation}

For \(K\in\R^{s\times n}\) and \(r>0\), define
\[
        \Psi_s(K;r)
        :=
        \Pbb_{X_s}\{\|X_s^{-1}K\|\ge r\},
        \qquad
        a_s(K)
        :=
        \Ebb_u\|u^\top K\|_2,
\]
where \(u\) is Haar-uniform on \(S^{s-1}\).

\begin{lemma}[Uniform specialized inverse-tail asymptotic]
\label{lem:specialized-inverse-tail}
Let \(X_s\in\R^{s\times s}\) be standard Gaussian, and let
\(K_s\in\R^{s\times n_s}\) be independent of \(X_s\). Assume that
\[
        K_sK_s^\top\succeq I_s
\]
and that, for every fixed \(0<p<\infty\), there is \(s_0(p)\) such that
\begin{equation}
        \Ebb\|K_s\|^p\le C_p,\qquad s\ge s_0(p)
        \label{eq:K-moment-bound}
\end{equation}
with constants independent of \(s\). All conclusions below are as \(s\to\infty\).
Let \(R_s\to\infty\) satisfy
\[
        R_s\ge e^{cs}
\]
for some fixed \(c>0\). Then
\begin{equation}
        \Ebb_{K_s}
        \sup_{r\in[R_s/2,\,2R_s]}
        \left|
        \Psi_s(K_s;r)
        -
        \frac{\gamma_s}{r}a_s(K_s)
        \right|^2
        =
        o\left(\frac{s}{R_s^2}\right).
        \label{eq:uniform-specialized-inverse-tail}
\end{equation}
\end{lemma}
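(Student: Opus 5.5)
The plan is to use the Gaussian singular value decomposition. Write \(X_s=U\Sigma V^\top\), with \(\Sigma=\diag(\sigma_1\ge\dots\ge\sigma_s)\) and \(\sigma_s=\rho_s\). For a real Gaussian matrix, \(U\) and \(V\) are Haar on \(O(s)\) and independent of \(\Sigma\). Then \(X_s^{-1}K=V\Sigma^{-1}U^\top K\), and since \(V\) is orthogonal, \(\|X_s^{-1}K\|=\|\Sigma^{-1}U^\top K\|\). Let \(u=u_s\) be the last column of \(U\); it is Haar on \(S^{s-1}\) and independent of \(\rho_s\). Keeping only the last row gives the lower bound
\[
        \|X_s^{-1}K\|\ge \rho_s^{-1}\|u^\top K\|_2 .
\]
Splitting off the first \(s-1\) rows gives the upper bound
\[
        \|X_s^{-1}K\|\le \rho_s^{-1}\|u^\top K\|_2+\sigma_{s-1}^{-1}\|K\|.
\]
These two bounds sandwich \(\Psi_s(K;r)\) between
\(\Pbb\{\rho_s\le \|u^\top K\|/r\}\) and
\[
        \Pbb\{\rho_s\le \|u^\top K\|/(r-r')\}+\Pbb\{\sigma_{s-1}\le \|K\|/r'\},
\]
where \(r'\) is an intermediate threshold. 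I would take \(r'=r\,\delta\) with \(\delta=\delta_s\to0\) slowly, for instance \(\delta=s^{-1}\).

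Next, I would integrate out \(\rho_s\) using independence from \(u\). This needs a quantitative form of \eqref{eq:gamma-s-def}: for \(0<x\le e^{-cs/2}\), say,
\[
        \Pbb\{\rho_s\le x\}=\gamma_s x\,(1+O(sx^2))+O(\text{tiny}),
\]
uniformly in \(s\). I would obtain this from Edelman's explicit joint density of the real Gaussian singular values: near the hard edge the density of \(\rho_s\) is \(\gamma_s(1+O(s x^2))\). Alternatively, a crude but uniform bound \(|\Pbb\{\rho_s\le x\}-\gamma_s x|\le C^s s^{O(1)}x^3\) would suffice, since \(x\sim e^{-cs}\) kills any \(C^s\) loss once \(c\) is compared appropriately; if needed I would first restrict to \(\|K\|\le e^{cs/4}\).

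Averaging over \(u\) then yields
\[
        \Pbb\{\rho_s\le \|u^\top K\|/r\}=\frac{\gamma_s}{r}a_s(K)+\mathcal E_1,
\]
with \(|\mathcal E_1|\lesssim \gamma_s r^{-3}s\,\Ebb_u\|u^\top K\|^3\). The upper-side shift \(r\mapsto r(1-\delta)\) costs a relative error of \(O(\delta)\) in the main term. For the second-singular-value term, I would use the two-point hard-edge estimate
\[
        \Pbb\{\sigma_{s-1}(X_s)\le y\}\le (C s y)^{4}\quad\text{or at least}\quad \le (C\sqrt s\,y)^{2}\cdot s^{O(1)},
\]
which follows from the same joint density, since the Vandermonde repulsion forces quadratic vanishing for two small singular values. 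With \(y=\|K\|/(\delta r)\) and \(r\ge e^{cs}/2\), this term is \(O(s^{O(1)}\|K\|^2 r^{-2}\delta^{-2})\), which is \(o(\sqrt s/r)\) with room to spare.

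To conclude, I would control uniformity in \(r\in[R_s/2,2R_s]\). All error bounds above are explicit monotone functions of \(r\), so the supremum is bounded by the error at the endpoints; no chaining is needed. The main term is of order \(\gamma_s a_s(K)/r\asymp\sqrt s\,a_s(K)/R_s\), using \eqref{eq:gamma-limit} and the fact that \(KK^\top\succeq I\) gives \(a_s(K)\ge \Ebb_u\|u\|=1\). The pointwise error is therefore \(\frac{\sqrt s}{R_s}\,a_s(K)\bigl(O(\delta)+s^{O(1)}\mathrm{poly}(\|K\|)e^{-cs}\bigr)\). Squaring, taking \(\Ebb_{K_s}\), and using the moment bounds \eqref{eq:K-moment-bound} via \(a_s(K)\le\|K\|\) gives \(O(\delta^2+e^{-cs})\,s/R_s^2=o(s/R_s^2)\), which is \eqref{eq:uniform-specialized-inverse-tail}. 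I expect the main obstacle to be the quantitative hard-edge input, namely the uniform-in-\(s\) expansion of \(\Pbb\{\rho_s\le x\}\) and the joint bound for \(\sigma_{s-1}\). The joint density carries normalizing constants growing with \(s\), so one must check these are beaten by the \(e^{-cs}\) smallness of \(x\). If this fails for small \(c\), I would first condition on \(\sigma_{s-1}\ge s^{-1/2}\delta\), an event of probability \(1-O(\delta^{2}s^{O(1)})\), and then apply the one-dimensional expansion conditionally.
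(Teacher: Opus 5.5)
Your proposal is correct and follows essentially the same route as the paper. The common steps are: the SVD with the smallest-singular direction Haar and independent of the singular values; the two-sided sandwich $\rho^{-1}\|u^\top K\|\le\|X_s^{-1}K\|\le\rho^{-1}\|u^\top K\|+\rho_2^{-1}\|K\|$; the uniform hard-edge expansion of $\Pbb\{\rho\le x\}$ around $\gamma_s x$ with remainder polynomial in $s$, combined with the two-small-singular-value bound $\Pbb\{\rho_2\le y\}\le s(s-1)y^4/12$; and finally squaring and averaging with the moment bounds, uniformly in $r$. The differences are cosmetic: the paper truncates $\|K\|\le R_s^{\delta}$ and cuts $\rho_2$ at $R_s^{-1+3\delta}$, while you split at $r'=r/s$. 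Your concern about the normalizing constants resolves favorably, because the relevant partition-function ratios are explicit and polynomial in $s$ (e.g.\ $\gamma_s\asymp\sqrt s$), so the fallback conditioning step is not needed.
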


\begin{proof}
All constants below may depend on the fixed moment order used in the
truncation, but not on \(s\), \(R_s\), or
\(r\in[R_s/2,2R_s]\). We write \(C_0\) for a fixed absolute
polynomial exponent in the hard-edge remainder estimates, while \(C\)
denotes constants that may change from line to line.

Write the singular-value decomposition
\[
        X_s=U\Sigma V^\top,
        \qquad
        \Sigma=\diag(\rho_1,\ldots,\rho_s),
        \qquad
        0<\rho_1\le\rho_2\le\cdots\le\rho_s.
\]
Then
\[
        \|X_s^{-1}K_s\|
        =
        \|\Sigma^{-1}U^\top K_s\|.
\]
For a real Gaussian matrix, \(U\) is Haar orthogonal and is independent
of the singular values. Let \(u_1\) denote its first column,
corresponding to \(\rho_1\).

Fix \(\delta\in(0,1/10)\), and first condition on a deterministic value
\(K_s=K\) satisfying
\[
        \|K\|\le R_s^\delta.
\]
On the event
\[
        \rho_2\ge R_s^{-1+3\delta},
\]
the contribution from the singular directions \(2,\ldots,s\) is
bounded by
\[
        \rho_2^{-1}\|K\|
        \le R_s^{1-2\delta}.
\]
Since \(r\ge R_s/2\), this contribution is \(o(r)\), uniformly for
\(r\in[R_s/2,2R_s]\). In particular, for all sufficiently large \(s\),
\begin{align*}
        \left\{
        \rho_1
        \le
        \frac{\|u_1^\top K\|_2}
        {r(1+R_s^{-\delta})}
        \right\}
        &\subset
        \{\|X_s^{-1}K\|\ge r\}
\\
        &\subset
        \left\{
        \rho_1
        \le
        \frac{\|u_1^\top K\|_2}
        {r(1-R_s^{-\delta})}
        \right\}
\end{align*}
on \(\{\rho_2\ge R_s^{-1+3\delta}\}\).

Using the Haar independence of \(u_1\), it follows that
\begin{equation}
        \Psi_s(K;r)
        =
        \Ebb_u
        \Pbb\left\{
        \rho_1
        \le
        \frac{\|u^\top K\|_2}{r}
        \right\}
        +
        \mathcal E_s(K,r),
        \label{eq:inverse-tail-first-direction}
\end{equation}
where the estimate for \(\mathcal E_s(K,r)\) is uniform over
\(r\in[R_s/2,2R_s]\).

Indeed, the error in \eqref{eq:inverse-tail-first-direction} is bounded
by the sum of the level-repulsion probability
\[
        \Pbb\{\rho_2\le R_s^{-1+3\delta}\}
\]
and the error produced by changing the threshold for \(\rho_1\) by the
relative factor \(1+O(R_s^{-\delta})\). By the hard-edge estimates in
Appendix~\ref{app:square-hard-edge}, for
\(0\le x\le C R_s^{-1+\delta}\),
\[
        \Pbb\{\rho_1\le x\}
        =
        \gamma_s x+O(s^{C_0}x^2),
        \qquad
        \Pbb\{\rho_2\le x\}
        =
        O(s^{C_0}x^3).
\]
Since
\[
        \frac{\|u^\top K\|_2}{r}
        \le
        2R_s^{-1+\delta},
\]
these estimates apply uniformly on the truncation event. The same hard-edge
bounds also apply at \(x=R_s^{-1+3\delta}\), since this cutoff tends to zero.

Consequently,
\begin{align}
        \sup_{r\in[R_s/2,\,2R_s]}
        |\mathcal E_s(K,r)|
        \le{}&
        C s^{C_0}R_s^{-3+9\delta}
        +
        C\gamma_sR_s^{-1-\delta}a_s(K)
\nonumber\\
        &+
        C s^{C_0}R_s^{-2+2\delta}
        \Ebb_u\|u^\top K\|_2^2.
        \label{eq:uniform-first-direction-error}
\end{align}
Moreover, expanding the principal term in
\eqref{eq:inverse-tail-first-direction} at the hard edge gives
\begin{align}
        \Ebb_u
        \Pbb\left\{
        \rho_1
        \le
        \frac{\|u^\top K\|_2}{r}
        \right\}
        &=
        \frac{\gamma_s}{r}a_s(K)
        +
        O\left(
        \frac{s^{C_0}}{r^2}
        \Ebb_u\|u^\top K\|_2^2
        \right).
        \label{eq:hard-edge-principal-expansion}
\end{align}

Define
\[
        D_s(K;r)
        :=
        \Psi_s(K;r)
        -
        \frac{\gamma_s}{r}a_s(K).
\]
Since \(r\ge R_s/2\), the bounds
\eqref{eq:uniform-first-direction-error} and
\eqref{eq:hard-edge-principal-expansion} imply, on
\(\{\|K\|\le R_s^\delta\}\),
\begin{align}
        \sup_{r\in[R_s/2,\,2R_s]}
        |D_s(K;r)|
        \le{}&
        C s^{C_0}R_s^{-3+9\delta}
        +
        C\gamma_sR_s^{-1-\delta}a_s(K)
\nonumber\\
        &+
        C s^{C_0}R_s^{-2+2\delta}
        \Ebb_u\|u^\top K\|_2^2.
        \label{eq:combined-specialized-error}
\end{align}
Here the \(r^{-2}\) remainder in
\eqref{eq:hard-edge-principal-expansion} has been absorbed into the
last term.

Let
\[
        \Omega_s:=\{\|K_s\|\le R_s^\delta\}.
\]
Using
\[
        (x+y+z)^2\le 3(x^2+y^2+z^2),
        \qquad
        a_s(K)\le\|K\|,
        \qquad
        \Ebb_u\|u^\top K\|_2^2\le\|K\|^2,
\]
and then applying \eqref{eq:K-moment-bound} with \(p=2\) and \(p=4\),
we obtain
\begin{align*}
        &\Ebb\left[
        \mathbf 1_{\Omega_s}
        \sup_{r\in[R_s/2,\,2R_s]}
        |D_s(K_s;r)|^2
        \right]
\\
        &\qquad\le
        C s^{2C_0}R_s^{-6+18\delta}
        +
        C\gamma_s^2R_s^{-2-2\delta}
        \Ebb a_s(K_s)^2
\\
        &\hspace{4.2cm}
        +
        C s^{2C_0}R_s^{-4+4\delta}
        \Ebb\left(
        \Ebb_u\|u^\top K_s\|_2^2
        \right)^2
\\
        &\qquad\le
        C s^{2C_0}R_s^{-6+18\delta}
        +
        C\gamma_s^2R_s^{-2-2\delta}
        \Ebb\|K_s\|^2
        +
        C s^{2C_0}R_s^{-4+4\delta}
        \Ebb\|K_s\|^4.
\end{align*}
Since \(\gamma_s^2\asymp s\), \(R_s\ge e^{cs}\), and
\(\delta<1/10\), all three terms are \(o(s/R_s^2)\). Hence
\begin{equation}
        \Ebb\left[
        \mathbf 1_{\Omega_s}
        \sup_{r\in[R_s/2,\,2R_s]}
        |D_s(K_s;r)|^2
        \right]
        =
        o\left(\frac{s}{R_s^2}\right).
        \label{eq:truncated-specialized-L2}
\end{equation}

It remains to control the truncation complement. Uniformly for
\(r\in[R_s/2,2R_s]\),
\[
        0\le\Psi_s(K_s;r)\le1,
        \qquad
        \frac{\gamma_s}{r}a_s(K_s)
        \le
        \frac{2\gamma_s}{R_s}\|K_s\|.
\]
Therefore,
\begin{align*}
        &\Ebb\left[
        \mathbf 1_{\Omega_s^c}
        \sup_{r\in[R_s/2,\,2R_s]}
        |D_s(K_s;r)|^2
        \right]
\\
        &\qquad\le
        2\Pbb(\Omega_s^c)
        +
        \frac{8\gamma_s^2}{R_s^2}
        \Ebb\left[
        \|K_s\|^2\mathbf 1_{\Omega_s^c}
        \right].
\end{align*}
Choose a fixed \(p>\max\{2,2/\delta\}\). By
\eqref{eq:K-moment-bound},
\[
        \Pbb(\Omega_s^c)
        \le
        R_s^{-p\delta}\Ebb\|K_s\|^p
        \le
        C_pR_s^{-p\delta}
        =
        o\left(\frac{s}{R_s^2}\right).
\]
Likewise,
\[
        \Ebb\left[
        \|K_s\|^2\mathbf 1_{\Omega_s^c}
        \right]
        \le
        R_s^{-\delta(p-2)}
        \Ebb\|K_s\|^p
        \le
        C_pR_s^{-\delta(p-2)}.
\]
Since \(\gamma_s^2\asymp s\), it follows that
\[
        \frac{\gamma_s^2}{R_s^2}
        \Ebb\left[
        \|K_s\|^2\mathbf 1_{\Omega_s^c}
        \right]
        =
        o\left(\frac{s}{R_s^2}\right).
\]
Combining this estimate with
\eqref{eq:truncated-specialized-L2} proves
\eqref{eq:uniform-specialized-inverse-tail}.
\end{proof}

The uniformity in Lemma~\ref{lem:specialized-inverse-tail} allows the deterministic
threshold \(R_s\) to be perturbed by a random amount that is only polynomially large.

\begin{lemma}[Quantitative stability under an additive threshold shift]
\label{lem:inverse-tail-threshold-shift}
Assume the hypotheses of Lemma~\ref{lem:specialized-inverse-tail}.  Let \(D_s\ge0\) be
jointly distributed with \(K_s\), independent of \(X_s\), and suppose that there is a
constant \(A<\infty\) such that, for every fixed \(0<p<\infty\) and all sufficiently large \(s\),
\begin{equation}
        \Ebb D_s^p\le C_p s^{Ap}.
        \label{eq:threshold-shift-moments}
\end{equation}
Let \(\theta_s\) be any random variable jointly distributed with \((K_s,D_s)\), independent
of \(X_s\), and satisfying \(|\theta_s|\le1\).  Set
\[
        \widetilde R_s
        :=
        R_s+\theta_sD_s.
\]
Then
\begin{equation}
        \Ebb_{K_s,D_s,\theta_s}
        \left|
        \Psi_s(K_s;\widetilde R_s)
        -
        \Psi_s(K_s;R_s)
        \right|^2
        =
        o\left(\frac{s}{R_s^2}\right).
        \label{eq:quantitative-threshold-shift}
\end{equation}
Here, on the negligible event \(\widetilde R_s\le0\), the expression
\(\Psi_s(K_s;\widetilde R_s)\) is interpreted as \(1\).
\end{lemma}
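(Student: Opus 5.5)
The plan is to split according to whether the shift is small relative to \(R_s\), and on the main event to compare \(\Psi_s(K_s;\widetilde R_s)\) and \(\Psi_s(K_s;R_s)\) through the linear approximation \(\gamma_s a_s(K_s)/r\) supplied by Lemma~\ref{lem:specialized-inverse-tail}.

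\textbf{Main event.} Fix a small \(\delta>0\) and let \(\Omega'_s:=\{D_s\le R_s^{\delta}\}\). On \(\Omega'_s\) we have \(|\widetilde R_s-R_s|\le R_s^\delta=o(R_s)\), since \(R_s\ge e^{cs}\). So for all large \(s\), both \(R_s\) and \(\widetilde R_s\) lie in \([R_s/2,2R_s]\), and \(\widetilde R_s>0\). This is exactly why Lemma~\ref{lem:specialized-inverse-tail} was stated with a supremum over that window: it controls \(\Psi_s(K_s;r)\) at a random \(r\) that depends on \((K_s,D_s,\theta_s)\), all of which are independent of \(X_s\). On \(\Omega'_s\) we decompose
\[
        \Psi_s(K_s;\widetilde R_s)-\Psi_s(K_s;R_s)
        =\Bigl[D_s(K_s;\widetilde R_s)-D_s(K_s;R_s)\Bigr]
        +\gamma_s a_s(K_s)\Bigl(\frac1{\widetilde R_s}-\frac1{R_s}\Bigr),
\]
where \(D_s(K;r)=\Psi_s(K;r)-\gamma_s a_s(K)/r\) is the error term from the proof of Lemma~\ref{lem:specialized-inverse-tail}. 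The bracket is bounded by \(2\sup_{r\in[R_s/2,2R_s]}|D_s(K_s;r)|\). By \eqref{eq:uniform-specialized-inverse-tail}, its second moment is \(o(s/R_s^2)\), and restricting to \(\Omega'_s\) only decreases the expectation.

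For the linear term, use \(|1/\widetilde R_s-1/R_s|\le 2D_s/R_s^2\) on \(\Omega'_s\) and \(a_s(K)\le\|K\|\). Its square is then bounded by \(4\gamma_s^2\|K_s\|^2D_s^2/R_s^4\). By Cauchy--Schwarz, \eqref{eq:K-moment-bound} with \(p=4\), and \eqref{eq:threshold-shift-moments} with \(p=4\), the expectation is
\[
        O\bigl(s\cdot s^{2A}/R_s^4\bigr)=o(s/R_s^2),
\]
because \(s^{2A}R_s^{-2}\to0\) when \(R_s\ge e^{cs}\).

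\textbf{Complement.} On \((\Omega'_s)^c\), including the event \(\widetilde R_s\le 0\) where \(\Psi_s\) is interpreted as \(1\), we bound the difference of two probabilities trivially by \(1\). Markov's inequality with \eqref{eq:threshold-shift-moments} then gives
\[
        \Pbb\bigl((\Omega'_s)^c\bigr)\le C_p s^{Ap}R_s^{-p\delta}.
\]
Choosing a fixed \(p>2/\delta\) makes this \(o(s/R_s^2)\), again using \(R_s\ge e^{cs}\) to absorb the polynomial factor \(s^{Ap}\).

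\textbf{Main obstacle.} The only delicate point is legitimately evaluating the supremum bound at a random threshold. This works because \(\widetilde R_s\) is measurable with respect to variables independent of \(X_s\), so \(\Psi_s(K_s;\widetilde R_s)\) is the conditional probability given \((K_s,D_s,\theta_s)\). The pointwise inequality \(|D_s(K_s;\widetilde R_s)|\le\sup_r|D_s(K_s;r)|\) then holds surely on \(\Omega'_s\), and it remains only to take expectations over \((K_s,D_s,\theta_s)\). Summing the three contributions (the \(D_s\)-bracket, the linear term, and the complement) gives \eqref{eq:quantitative-threshold-shift}.
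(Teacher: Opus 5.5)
Your proposal is correct and follows essentially the same argument as the paper. You restrict to a small-shift event, split the difference into the two inverse-tail errors (controlled by the supremum in Lemma~\ref{lem:specialized-inverse-tail}) plus the linear term $\gamma_s a_s(K_s)\,|1/\widetilde R_s-1/R_s|\le 2\gamma_s\|K_s\|D_s/R_s^2$ (handled by H\"older), and bound the complement by Markov. The only difference is the cutoff: the paper truncates at $\{D_s\le R_s/2\}$, so any fixed $p>2$ works in the Markov step, whereas your cutoff $\{D_s\le R_s^{\delta}\}$ needs $p>2/\delta$ but is otherwise equivalent.
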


\begin{proof}
Let
\[
        \Omega_s:=\{D_s\le R_s/2\}.
\]
On \(\Omega_s\),
\[
        \widetilde R_s\in[R_s/2,\,3R_s/2].
\]
Define
\[
        \varepsilon_s(K;r)
        :=
        \Psi_s(K;r)-\frac{\gamma_s}{r}a_s(K).
\]
Then, on \(\Omega_s\),
\begin{align*}
        \left|
        \Psi_s(K_s;\widetilde R_s)
        -
        \Psi_s(K_s;R_s)
        \right|
        \le{}&
        |\varepsilon_s(K_s;\widetilde R_s)|
        +
        |\varepsilon_s(K_s;R_s)|
\\
        &+
        \gamma_s a_s(K_s)
        \left|
        \frac{1}{\widetilde R_s}
        -
        \frac{1}{R_s}
        \right|.
\end{align*}
The first two terms contribute \(o(s/R_s^2)\) in squared mean by
Lemma~\ref{lem:specialized-inverse-tail}.  For the last term, on \(\Omega_s\),
\[
        \left|
        \frac{1}{\widetilde R_s}
        -
        \frac{1}{R_s}
        \right|
        \le
        \frac{2D_s}{R_s^2}.
\]
Since \(a_s(K_s)\le\|K_s\|\), Hölder's inequality,
\eqref{eq:K-moment-bound}, and \eqref{eq:threshold-shift-moments} give
\[
        \Ebb
        \left[
        \gamma_s^2a_s(K_s)^2
        \left|
        \frac{1}{\widetilde R_s}
        -
        \frac{1}{R_s}
        \right|^2
        1_{\Omega_s}
        \right]
        \le
        \frac{C\gamma_s^2}{R_s^4}
        \Ebb\!\left[\|K_s\|^2D_s^2\right]
        \le
        \frac{C s^{2A+1}}{R_s^4}.
\]
Because \(R_s\ge e^{cs}\), this is \(o(s/R_s^2)\).

On \(\Omega_s^c\), the difference of the two probabilities is bounded by \(1\).  For every
fixed \(p\),
\[
        \Pbb(\Omega_s^c)
        =
        \Pbb\{D_s>R_s/2\}
        \le
        \frac{2^p\Ebb D_s^p}{R_s^p}
        \le
        C_p\frac{s^{Ap}}{R_s^p}.
\]
Choosing \(p>2\) shows that this is \(o(s/R_s^2)\).  Combining the estimates on
\(\Omega_s\) and \(\Omega_s^c\) proves
\eqref{eq:quantitative-threshold-shift}.
\end{proof}

\begin{corollary}[Random-threshold inverse-tail asymptotic]
\label{cor:random-threshold-inverse-tail}
Under the hypotheses of Lemma~\ref{lem:inverse-tail-threshold-shift},
\begin{equation}
        \Ebb_{K_s,D_s,\theta_s}
        \left|
        \Pbb_{X_s}
        \{\|X_s^{-1}K_s\|\ge R_s+\theta_sD_s\}
        -
        \frac{\gamma_s}{R_s}
        \Ebb_u\|u^\top K_s\|_2
        \right|^2
        =
        o\left(\frac{s}{R_s^2}\right).
        \label{eq:random-threshold-inverse-tail}
\end{equation}
\end{corollary}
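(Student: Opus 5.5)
The corollary follows by splitting the difference into two pieces and using the inequality \((x+y)^2\le 2x^2+2y^2\). Write
\[
        \Pbb_{X_s}\{\|X_s^{-1}K_s\|\ge R_s+\theta_sD_s\}
        -
        \frac{\gamma_s}{R_s}a_s(K_s)
        =
        \bigl[\Psi_s(K_s;\widetilde R_s)-\Psi_s(K_s;R_s)\bigr]
        +
        \bigl[\Psi_s(K_s;R_s)-\tfrac{\gamma_s}{R_s}a_s(K_s)\bigr].
\]
Because \(X_s\) is independent of \((K_s,D_s,\theta_s)\), the conditional probability on the left, given \((K_s,D_s,\theta_s)\), is exactly \(\Psi_s(K_s;\widetilde R_s)\). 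On the event \(\widetilde R_s\le 0\) both sides equal \(1\), which matches the stated convention. The first bracket is the situation of Lemma~\ref{lem:inverse-tail-threshold-shift}, whose conclusion \eqref{eq:quantitative-threshold-shift} gives that its mean square is \(o(s/R_s^2)\).

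For the second bracket, apply Lemma~\ref{lem:specialized-inverse-tail}. The supremum over \(r\in[R_s/2,2R_s]\) in \eqref{eq:uniform-specialized-inverse-tail} dominates the value at \(r=R_s\). Its mean square with respect to \(K_s\) is therefore \(o(s/R_s^2)\). This bracket is a function of \(K_s\) alone, so taking the expectation over the joint law of \((K_s,D_s,\theta_s)\) gives the same quantity.

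Adding the two estimates with the factor \(2\) gives \eqref{eq:random-threshold-inverse-tail}. The only point that needs care is measurability and independence: \(\Psi_s\) must be evaluated at a random threshold that is independent of \(X_s\). This is exactly what the independence hypotheses provide, so no step is a genuine obstacle; the real work was already done in the two preceding lemmas.
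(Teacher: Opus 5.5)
Your proposal is correct and is essentially the paper's proof. The paper combines Lemma~\ref{lem:inverse-tail-threshold-shift} with Lemma~\ref{lem:specialized-inverse-tail} evaluated at \(r=R_s\) via the triangle inequality in \(L^2\); your use of \((x+y)^2\le 2x^2+2y^2\) is an equivalent way of doing that same combination.
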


\begin{proof}
By the triangle inequality in \(L^2\), combine
Lemma~\ref{lem:inverse-tail-threshold-shift} with
Lemma~\ref{lem:specialized-inverse-tail} evaluated at \(r=R_s\).
\end{proof}

\begin{remark}
The preceding results make explicit the threshold replacement used below.  In the
row-conditioned application,
\[
        K=(-G_1S^{-1},I_s),
        \qquad
        D_s=\|S^{-1}\|.
\]
The matrix \(K\) and the shift \(D_s\) may be dependent, but both are independent of the
square Gaussian block \(G_2\).  In the central-overlap regime,
Corollaries~\ref{cor:central-inverse-operator-moments}
and~\ref{cor:row-conditioned-factor-moments} verify the moment assumptions
for \(D_s\) and \(K\), respectively.  Since the inverse-tail threshold is exponential
in \(s\), while \(D_s\) has bounded polynomial moments, the relative displacement
\(D_s/R_s\) is exponentially small in every fixed \(L^p\).

The local computation itself uses only the Gaussian singular-value decomposition, the
independence of singular vectors and singular values, and the two quantitative hard-edge
estimates recorded in Appendix~\ref{app:square-hard-edge}.  The uniform formulation above
also permits the sandwich
\[
        \|G_2^{-1}K\|
        \le
        \|L^{-1}\|
        \le
        \|G_2^{-1}K\|+\|S^{-1}\|
\]
to be used without an implicit random-threshold substitution.
\end{remark}

\subsection{Conditional linearization of the bad-completion probability}

Fix a central value \(s\in\mathcal C_{\eta,M}\), and set
\[
        m:=M-s.
\]
Let
\[
        C\in\R^{m\times M}
\]
be a standard Gaussian shared block, and let
\[
        G\in\R^{s\times M}
\]
be an independent Gaussian completion.  Define
\[
        q_C(t):=
        \Pbb_G\left\{
        \smin\begin{pmatrix}C\\G\end{pmatrix}
        \le \frac{t}{\sqrt M}
        \right\}.
\]

\begin{lemma}[Conditional linearization]
\label{lem:conditional-linearization}
Uniformly for \(s\in\mathcal C_{\eta,M}\),
\begin{equation}
        q_C(t_M)
        =
        t_M\Theta_{M,s}(C)+r_{M,s}(C),
        \label{eq:conditional-linearization}
\end{equation}
where
\begin{equation}
        \Ebb_C|r_{M,s}(C)|^2=o(t_M^2),
        \label{eq:conditional-remainder-L2}
\end{equation}
and
\begin{equation}
        \Theta_{M,s}(C)
        :=
        \frac{\gamma_s}{\sqrt M}
        \Ebb_z
        \sqrt{1+z^\top(CC^\top)^{-1}z},
        \qquad
        z\sim N(0,I_m).
        \label{eq:theta-def}
\end{equation}
\end{lemma}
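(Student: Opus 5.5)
We reduce to the block form and then apply Corollary~\ref{cor:random-threshold-inverse-tail} with \(X_s=G_2\). As in the block reduction, for almost every \(C\) pick \(Q\in O(M)\) with \(CQ=(S\;0)\), \(S\in\R^{m\times m}\) invertible, and write \(GQ=(G_1\;G_2)\). Here \(G_1\in\R^{s\times m}\) and \(G_2\in\R^{s\times s}\) are independent standard Gaussian blocks, independent of \(C\). Then
\[
L=\begin{pmatrix}S&0\\G_1&G_2\end{pmatrix},\qquad
L^{-1}=\begin{pmatrix}S^{-1}&0\\ G_2^{-1}K&\end{pmatrix}\text{-type block form},\qquad K:=(-G_1S^{-1},I_s)\in\R^{s\times M}.
\]
The lower block row of \(L^{-1}\) is exactly \(G_2^{-1}K\), up to right multiplication by \(\operatorname{diag}(S^{-1},I)\) structure. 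The triangle inequality gives the sandwich recorded in the remark:
\[
\|G_2^{-1}K\|\le\|L^{-1}\|\le\|G_2^{-1}K\|+\|S^{-1}\|.
\]
Hence \(\{\smin(L)\le t_M/\sqrt M\}=\{\|L^{-1}\|\ge R\}\) with \(R:=\sqrt M/t_M=\sqrt M K_M/w_M\). This event equals \(\{\|G_2^{-1}K\|\ge R-\theta D\}\) for some \(\theta\in[0,1]\) measurable in \((G_1,C)\) and \(D=\|S^{-1}\|\). More precisely, it is sandwiched between the threshold-\(R\) and threshold-\((R-D)\) events, and the two can be compared by Lemma~\ref{lem:inverse-tail-threshold-shift}. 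Since \(w_M=e^{o(M)}\) and \(s\ge\eta M\), we have \(R\ge e^{cs}\).

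The linear coefficient comes from conditioning on \(C\) and averaging \(G_1\). The corollary yields \(\gamma_s R^{-1}\Ebb_u\|u^\top K\|\). For fixed unit \(u\), \(u^\top K=(-u^\top G_1S^{-1},u^\top)\), and \(g:=G_1^\top u\sim N(0,I_m)\). Therefore
\[
\|u^\top K\|^2=1+g^\top S^{-1}S^{-\top}g=1+g^\top(CC^\top)^{-1}g,
\]
since \(CC^\top=SS^\top\). Taking the \(G_1\)-average, \(\gamma_s R^{-1}\Ebb\|u^\top K\|\) is \(t_M\Theta_{M,s}(C)\). We therefore define
\[
r_{M,s}(C):=q_C(t_M)-t_M\Theta_{M,s}(C)
\]
and must show \(\Ebb_C|r|^2=o(t_M^2)\). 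By Jensen in \(G_1\), \(\Ebb_C|r|^2\) is bounded by the full \(L^2\) error over \((C,G_1)\), and that error is \(o(s/R^2)=o(t_M^2 s/M)=o(t_M^2)\).

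The main obstacle is verifying the hypotheses uniformly over central \(s\). First, \(KK^\top=I+G_1S^{-1}S^{-\top}G_1^\top\succeq I\) holds automatically. Second, we need \(\Ebb\|K\|^p\le C_p\) and \(\Ebb\|S^{-1}\|^p\le C_ps^{Ap}\); these are what the forward-referenced Corollaries~\ref{cor:central-inverse-operator-moments} and~\ref{cor:row-conditioned-factor-moments} provide. For \(\|K\|\le 1+\|G_1\|\|S^{-1}\|\), note that \(\|G_1\|\lesssim\sqrt M\) with Gaussian tails. Also \(\smin(C)\) for an \(m\times M\) matrix with \(M-m=s\ge\eta M\) is \(\gtrsim\sqrt M\), with tail \((Cu)^{s+1}\) from the rectangular hard-edge lemma. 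So \(\|S^{-1}\|\lesssim M^{-1/2}\) in every fixed \(L^p\) once \(s>p\), and \(\|K\|\) has bounded moments. The moment bound for \(\|S^{-1}\|\) is stronger than needed.

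Uniformity in \(s\in\mathcal C_{\eta,M}\) follows because all error terms in Lemma~\ref{lem:specialized-inverse-tail} are quantitative in \(s\) and \(R\), with \(s\to\infty\) uniformly. The \(o(\cdot)\) can thus be taken as a supremum over the central range.
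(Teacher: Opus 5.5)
Your proposal is correct and follows the paper's proof essentially step for step. The shared ingredients are the block reduction with $K=(-G_1S^{-1},I_s)$ and $D=\|S^{-1}\|$, the probability sandwich between thresholds $R$ and $R-D$ handled by Lemma~\ref{lem:inverse-tail-threshold-shift}, linearization via Lemma~\ref{lem:specialized-inverse-tail} with $X_s=G_2$, Jensen over $G_1$, and the moment hypotheses from Corollaries~\ref{cor:central-inverse-operator-moments} and~\ref{cor:row-conditioned-factor-moments}. There are two harmless slips: the event $\{\|L^{-1}\|\ge R\}$ is not literally $\{\|G_2^{-1}K\|\ge R-\theta D\}$ for a $\theta$ independent of $G_2$, but the sandwich you fall back on is all that is needed; and $g^\top S^{-1}S^{-\top}g=g^\top(CC^\top)^{-1}g$ holds only in distribution over $g$ (the two matrices are orthogonally similar), which suffices after averaging over $G_1$.
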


\begin{proof}
By rotational invariance, choose \(Q\in O(M)\), measurably as a function of \(C\), such
that
\[
        CQ=(S,0),
\]
where \(S\in\R^{m\times m}\) is invertible almost surely.  In particular,
\[
        CC^\top=SS^\top.
\]
Conditionally on \(C\), the matrix \(GQ\) is again standard Gaussian and is independent of
\(C\).  Write
\[
        GQ=(G_1,G_2),
\]
where
\[
        G_1\in\R^{s\times m},
        \qquad
        G_2\in\R^{s\times s}
\]
are independent standard Gaussian blocks.  Thus
\[
        \smin\begin{pmatrix}C\\G\end{pmatrix}
        \stackrel{d}{=}
        \smin L,
        \qquad
        L:=
        \begin{pmatrix}
        S&0\\
        G_1&G_2
        \end{pmatrix}.
\]

The matrices \(S\) and \(G_2\) are invertible almost surely, and
\[
        L^{-1}
        =
        \begin{pmatrix}
        S^{-1}&0\\
        -G_2^{-1}G_1S^{-1}&G_2^{-1}
        \end{pmatrix}.
\]
Set
\[
        K:=(-G_1S^{-1},I_s)\in\R^{s\times M},
        \qquad
        D:=\|S^{-1}\|.
\]
The lower block of \(L^{-1}\) is \(G_2^{-1}K\).  Consequently,
\begin{equation}
        \|G_2^{-1}K\|
        \le
        \|L^{-1}\|
        \le
        \|G_2^{-1}K\|+D.
        \label{eq:block-inverse-norm-equivalence}
\end{equation}

Let
\[
        R_M:=\frac{\sqrt M}{t_M}.
\]
Since
\[
        \left\{\smin(L)\le\frac{t_M}{\sqrt M}\right\}
        =
        \{\|L^{-1}\|\ge R_M\},
\]
we may write
\[
        q_C(t_M)
        =
        \Ebb_{G_1}
        \Pbb_{G_2}\{\|L^{-1}\|\ge R_M\mid C,G_1\}.
\]

For fixed \(C\) and \(G_1\), define
\[
        \Psi_{C,G_1}(r)
        :=
        \Pbb_{G_2}\{\|G_2^{-1}K\|\ge r\}.
\]
The norm comparison \eqref{eq:block-inverse-norm-equivalence} gives the pointwise
probability sandwich
\begin{equation}
        \Psi_{C,G_1}(R_M)
        \le
        \Pbb_{G_2}\{\|L^{-1}\|\ge R_M\mid C,G_1\}
        \le
        \Psi_{C,G_1}(R_M-D).
        \label{eq:block-inverse-probability-sandwich}
\end{equation}
On the negligible event \(R_M-D\le0\), the right-hand side is interpreted as \(1\).

We now verify the hypotheses of the inverse-tail results from the preceding subsection.
Since \(s\in\mathcal C_{\eta,M}\), one has \(s\asymp_\eta M\).  Moreover,
\[
        t_M=\frac{w_M}{K_M},
        \qquad
        w_M=e^{o(M)},
\]
so there is a constant \(c_\eta>0\) such that
\[
        R_M\ge e^{c_\eta s}
\]
for all sufficiently large \(M\), uniformly over
\(s\in\mathcal C_{\eta,M}\).

The matrix \(K\) is independent of \(G_2\) and satisfies
\[
        KK^\top\succeq I_s.
\]

Because \(S\) has the same singular values as \(C\),
Corollary~\ref{cor:central-inverse-operator-moments} gives, for every fixed
\(0<p<\infty\),
\begin{equation}
        \Ebb_C D^p
        =
        \Ebb_C\|S^{-1}\|^p
        =
        O_{\eta,p}(M^{-p/2}).
        \label{eq:D-central-moments}
\end{equation}
Moreover, Corollary~\ref{cor:row-conditioned-factor-moments} gives
\begin{equation}
        \Ebb_{C,G_1}\|K\|^p
        \le
        C_{\eta,p}.
        \label{eq:K-central-moments}
\end{equation}

All moment bounds here are needed only for fixed orders and sufficiently
large \(M\), uniformly over central \(s\). The estimates in the inverse-tail
proof depend only on these uniform constants and the exponential lower bound
for \(R_M\); their remainders are therefore uniform over central overlaps.
Thus the additive-threshold estimate
Lemma~\ref{lem:inverse-tail-threshold-shift}, applied jointly to the randomness
\((C,G_1)\) with \(X_s=G_2\), \(D_s=D\), and \(\theta_s=-1\), yields
\begin{equation}
        \Ebb_{C,G_1}
        \left|
        \Psi_{C,G_1}(R_M-D)
        -
        \Psi_{C,G_1}(R_M)
        \right|^2
        =
        o\left(\frac{s}{R_M^2}\right).
        \label{eq:block-shift-L2}
\end{equation}
Because \(s\asymp_\eta M\) and \(R_M=\sqrt M/t_M\),
\[
        \frac{s}{R_M^2}
        =
        \frac{s}{M}t_M^2
        \asymp_\eta t_M^2.
\]
Hence the right-hand side of \eqref{eq:block-shift-L2} is \(o(t_M^2)\).

Define
\[
        \widehat q_C
        :=
        \Ebb_{G_1}\Psi_{C,G_1}(R_M).
\]
By the sandwich \eqref{eq:block-inverse-probability-sandwich},
\[
        0
        \le
        q_C(t_M)-\widehat q_C
        \le
        \Ebb_{G_1}
        \left[
        \Psi_{C,G_1}(R_M-D)
        -
        \Psi_{C,G_1}(R_M)
        \right].
\]
Jensen's inequality and \eqref{eq:block-shift-L2} therefore imply
\begin{equation}
        \Ebb_C
        |q_C(t_M)-\widehat q_C|^2
        =
        o(t_M^2).
        \label{eq:qC-block-reduction-error}
\end{equation}

It remains to linearize \(\widehat q_C\).  Apply
Lemma~\ref{lem:specialized-inverse-tail} jointly to the random matrix
\[
        K=K(C,G_1),
\]
which is independent of \(G_2\).  The joint moment estimate
\eqref{eq:K-central-moments} is the relevant hypothesis; no uniform conditional
moment bound for every fixed \(C\) is needed.  We obtain
\begin{equation}
        \Psi_{C,G_1}(R_M)
        =
        \frac{\gamma_s}{R_M}
        \Ebb_u\|u^\top K\|_2
        +
        \xi_{M,s}(C,G_1),
        \label{eq:conditional-inverse-tail-expansion}
\end{equation}
where
\begin{equation}
        \Ebb_{C,G_1}
        |\xi_{M,s}(C,G_1)|^2
        =
        o\left(\frac{s}{R_M^2}\right)
        =
        o(t_M^2).
        \label{eq:conditional-inverse-tail-error}
\end{equation}

Averaging \eqref{eq:conditional-inverse-tail-expansion} over \(G_1\) gives
\[
        \widehat q_C
        =
        \frac{\gamma_s}{R_M}
        \Ebb_{G_1,u}\|u^\top K\|_2
        +
        \overline\xi_{M,s}(C),
\]
where
\[
        \overline\xi_{M,s}(C)
        :=
        \Ebb_{G_1}[\xi_{M,s}(C,G_1)\mid C].
\]
By Jensen's inequality and \eqref{eq:conditional-inverse-tail-error},
\begin{equation}
        \Ebb_C|\overline\xi_{M,s}(C)|^2
        =
        o(t_M^2).
        \label{eq:averaged-inverse-tail-error}
\end{equation}

Since \(\|u\|_2=1\),
\[
        \|u^\top K\|_2
        =
        \sqrt{1+\|u^\top G_1S^{-1}\|_2^2}.
\]
Also,
\[
        \frac{1}{R_M}
        =
        \frac{t_M}{\sqrt M}.
\]
It follows from \eqref{eq:qC-block-reduction-error} and
\eqref{eq:averaged-inverse-tail-error} that
\begin{equation}
        q_C(t_M)
        =
        \frac{\gamma_s t_M}{\sqrt M}
        \Ebb_{G_1,u}
        \sqrt{1+\|u^\top G_1S^{-1}\|_2^2}
        +
        r_{M,s}(C),
        \label{eq:qC-before-Gaussian-average}
\end{equation}
with
\[
        \Ebb_C|r_{M,s}(C)|^2=o(t_M^2).
\]

For fixed \(u\in S^{s-1}\), the row vector \(u^\top G_1\) is distributed as
\(z^\top\), where \(z\sim N(0,I_m)\).  Hence
\[
        \|u^\top G_1S^{-1}\|_2^2
        \stackrel{d}{=}
        z^\top S^{-1}S^{-\top}z.
\]
The matrices
\[
        S^{-1}S^{-\top}
        \qquad\text{and}\qquad
        (SS^\top)^{-1}=S^{-\top}S^{-1}
\]
have the same eigenvalues.  Since \(z\) is rotationally invariant and
\(SS^\top=CC^\top\),
\[
        \Ebb_{G_1,u}
        \sqrt{1+\|u^\top G_1S^{-1}\|_2^2}
        =
        \Ebb_z
        \sqrt{1+z^\top(CC^\top)^{-1}z}.
\]
Substituting this identity into \eqref{eq:qC-before-Gaussian-average} proves
\eqref{eq:conditional-linearization}--\eqref{eq:theta-def}.
\end{proof}

\subsection{Concentration of the coefficient}

\begin{lemma}[Inverse-Wishart trace concentration]
\label{lem:inverse-wishart-concentration}
Let \(C\in\R^{m\times M}\) be standard Gaussian, where
\[
        m=M-s,
        \qquad
        \frac{s}{M}\in[\eta,1-\eta].
\]
Set \(W:=CC^\top\) and \(\beta:=s/M\).  Then
\begin{equation}
        \tr(W^{-1})=\frac{1-\beta}{\beta}+o_{L^2}(1),
        \label{eq:tr-W-inv-conc}
\end{equation}
and
\begin{equation}
        \tr(W^{-2})=o_{L^1}(1),
        \label{eq:tr-W-inv2}
\end{equation}
uniformly for \(s\in\mathcal C_{\eta,M}\).
\end{lemma}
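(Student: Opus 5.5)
The plan is to use the exact low-order moments of the inverse Wishart distribution, and then Chebyshev/Markov-type bounds. Here \(W=CC^\top\) is an \(m\times m\) real Wishart matrix with \(M\) degrees of freedom and identity scale. Write \(n:=M\) and \(p:=m=M-s\), so that \(n-p=s\ge\eta M\). In the central regime \(n-p-3\ge1\) for large \(M\), so \(\Ebb W^{-1}\) and all second moments \(\Ebb[W^{-1}_{ij}W^{-1}_{kl}]\) exist. Classical formulas (Muirhead, or Appendix-style inverse-Wishart identities) give
\[
        \Ebb W^{-1}=\frac{1}{n-p-1}I_p,
        \qquad
        \Ebb\,\tr(W^{-1})=\frac{M-s}{s-1}.
\]
Since \(s\ge\eta M\), this differs from \((1-\beta)/\beta=(M-s)/s\) by \(O_\eta(1/M)\), uniformly over \(s\in\mathcal C_{\eta,M}\).

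\emph{Variance of the trace.} I would compute \(\Var\tr(W^{-1})\) from the standard second-moment identity
\[
        \Cov(W^{-1}_{ij},W^{-1}_{kl})
        =
        \frac{2\delta_{ij}\delta_{kl}+(n-p-1)(\delta_{ik}\delta_{jl}+\delta_{il}\delta_{jk})}{(n-p)(n-p-1)^2(n-p-3)}.
\]
Summing over \(i=j\) and \(k=l\) gives
\[
        \Var\tr(W^{-1})
        =
        \frac{2p^2+2p(n-p-1)}{(n-p)(n-p-1)^2(n-p-3)}
        =O_\eta\!\left(\frac{M^2}{M^4}\right)=O_\eta(M^{-2}).
\]
Combined with the mean computation, this yields \(\tr(W^{-1})-(1-\beta)/\beta\to0\) in \(L^2\), uniformly on the central range, which is \eqref{eq:tr-W-inv-conc}.

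\emph{The term \(\tr(W^{-2})\).} Since \(W^{-2}\succeq0\), convergence in \(L^1\) is just \(\Ebb\tr(W^{-2})\to0\). The same second-moment identity gives
\[
        \Ebb\,\tr(W^{-2})
        =\sum_{i,j}\Ebb (W^{-1}_{ij})^2
        =O\!\left(\frac{p\,n}{(n-p)^3}\right),
\]
where the diagonal \(\Ebb[(W^{-1}_{ii})^2]\) and off-diagonal terms are summed separately. In the central regime this is \(O_\eta(M^{-1})\), proving \eqref{eq:tr-W-inv2}. A sanity check without exact formulas: \(\tr(W^{-2})\le \tr(W^{-1})\,\|W^{-1}\|\). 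Here \(\|W^{-1}\|=\smin(C)^{-2}\), which is of order \(M^{-1}\) in every \(L^q\) by the rectangular hard-edge tail \eqref{eq:rect-tail} with \(s\asymp M\). Cauchy–Schwarz then gives \(\Ebb\tr(W^{-2})=O_\eta(M^{-1})\) from the bounded second moment of \(\tr W^{-1}\).

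\emph{Main obstacle.} The main point of care is getting the covariance identity for inverse-Wishart entries right, and checking that its denominators are uniformly bounded away from zero (they are, since \(n-p=s\ge\eta M\)). If I preferred to avoid the exact formula, I would fall back on a concentration argument. On the event \(\{\smin(C)\ge c_\eta\sqrt M\}\), whose complement has exponentially small probability by \eqref{eq:rect-tail}, the map \(C\mapsto\tr((CC^\top)^{-1})\) is \(O_\eta(M^{-1})\)-Lipschitz in Frobenius norm. Gaussian concentration then gives fluctuations \(O(M^{-1})\). The complement event is handled with Hölder and the polynomial moments of \(\smin(C)^{-1}\) from the truncated inverse-moment bound. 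The expectation is still identified via \(\Ebb W^{-1}=I/(n-p-1)\).
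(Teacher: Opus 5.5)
Your proposal is correct and follows essentially the same route as the paper: the exact inverse-Wishart mean \(\Ebb W^{-1}=I_m/(s-1)\) together with the second-order covariance formulas (which you state correctly, and which agree with the paper's appendix entries with \(d=s\)) give \(\Var\tr(W^{-1})=O_\eta(M^{-2})\) and \(\Ebb\tr(W^{-2})=m(M-1)/\bigl(s(s-1)(s-3)\bigr)=O_\eta(M^{-1})\), uniformly on the central range. Your cross-check via \(\tr(W^{-2})\le\tr(W^{-1})\|W^{-1}\|\) and the central hard-edge moments is valid, though it is an extra not needed for the argument.
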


\begin{proof}
The matrix \(W\) is Wishart with dimension \(m\) and \(M\) degrees of freedom.  The inverse-Wishart moment formulas recalled in Appendix~\ref{app:inverse-wishart}, which are standard consequences of \cite{Matsumoto2012,LetacMassam2004}, give
\[
        \Ebb W^{-1}=\frac{I_m}{M-m-1}=\frac{I_m}{s-1},
\]
and hence
\[
        \Ebb\tr(W^{-1})=\frac{m}{s-1}=\frac{1-\beta}{\beta}+o(1).
\]
The second-order inverse-Wishart moment formulas give
\[
        \Var(\tr(W^{-1}))=O_\eta(M^{-2})
\]
and
\[
        \Ebb\tr(W^{-2})=O_\eta(M^{-1}).
\]
These estimates imply \eqref{eq:tr-W-inv-conc} and \eqref{eq:tr-W-inv2}.
\end{proof}

\begin{lemma}[Coefficient concentration]
\label{lem:theta-concentration}
Uniformly for \(s\in\mathcal C_{\eta,M}\),
\begin{equation}
        \Theta_{M,s}(C)=\gamma_\ast+o_{L^2}(1).
        \label{eq:theta-conc}
\end{equation}
In particular,
\begin{equation}
        \Var_C(\Theta_{M,s}(C))\longrightarrow0.
        \label{eq:theta-var-zero}
\end{equation}
\end{lemma}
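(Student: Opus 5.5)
The idea is that \(\Theta_{M,s}(C)\) is, up to the deterministic prefactor \(\gamma_s/\sqrt M\), a smooth function of the Gaussian quadratic form \(z^\top W^{-1}z\), where \(W=CC^\top\). Conditionally on \(C\), this quadratic form concentrates around \(\tr(W^{-1})\), and Lemma~\ref{lem:inverse-wishart-concentration} says that \(\tr(W^{-1})\) in turn concentrates around \((1-\beta)/\beta\), with \(\beta=s/M\). Since
\[
        \sqrt{\beta}\cdot\sqrt{1+\tfrac{1-\beta}{\beta}}=1=\gamma_\ast
\]
and \(\gamma_s/\sqrt M\approx\sqrt\beta\), the limit is exactly \(\gamma_\ast\). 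The proof is therefore a chain of three comparisons, each controlled in \(L^2(C)\).

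First I would compare the \(z\)-average with its value at the conditional mean. Set \(Q:=z^\top W^{-1}z\). For fixed \(C\) we have \(\Ebb_zQ=\tr(W^{-1})\) and \(\Var_z Q=2\tr(W^{-2})\). The map \(x\mapsto\sqrt{1+x}\) is \(\tfrac12\)-Lipschitz on \([0,\infty)\). Hence Cauchy--Schwarz in \(z\) gives
\[
        \Bigl|\Ebb_z\sqrt{1+Q}-\sqrt{1+\tr(W^{-1})}\Bigr|
        \le\tfrac12\,\Ebb_z|Q-\tr(W^{-1})|
        \le\tfrac12\sqrt{2\tr(W^{-2})}.
\]
The square of the right side has \(C\)-expectation \(\tfrac12\Ebb\tr(W^{-2})=o(1)\) by \eqref{eq:tr-W-inv2}. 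So this first error is \(o_{L^2}(1)\), uniformly for \(s\in\mathcal C_{\eta,M}\).

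Second, the same Lipschitz bound gives
\[
        \Bigl|\sqrt{1+\tr(W^{-1})}-\beta^{-1/2}\Bigr|
        \le\tfrac12\Bigl|\tr(W^{-1})-\tfrac{1-\beta}{\beta}\Bigr|,
\]
and the right side is \(o_{L^2}(1)\) by \eqref{eq:tr-W-inv-conc}. Combining the two steps,
\[
        \Ebb_z\sqrt{1+z^\top W^{-1}z}=\beta^{-1/2}+o_{L^2}(1),
\]
uniformly over central \(s\).

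Third, I would handle the deterministic prefactor using the decomposition
\[
        \Theta_{M,s}(C)-\gamma_\ast
        =\frac{\gamma_s}{\sqrt M}\Bigl(\Ebb_z\sqrt{1+z^\top W^{-1}z}-\beta^{-1/2}\Bigr)
        +\beta^{-1/2}\Bigl(\frac{\gamma_s}{\sqrt M}-\sqrt\beta\Bigr).
\]
Here \(\gamma_s/\sqrt M=(\gamma_s/\sqrt s)\sqrt\beta\), and \(s\ge\eta M\to\infty\). By \eqref{eq:gamma-limit} this prefactor is bounded and equals \(\sqrt\beta+o(1)\) uniformly for \(s\in\mathcal C_{\eta,M}\); also \(\beta^{-1/2}\le\eta^{-1/2}\). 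So the first term is a bounded multiple of an \(o_{L^2}(1)\) quantity, and the second term is a deterministic \(o(1)\). This proves \eqref{eq:theta-conc}. Then \eqref{eq:theta-var-zero} follows from \(\Var_C(\Theta_{M,s})\le\Ebb_C|\Theta_{M,s}-\gamma_\ast|^2\).

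The only delicate point is uniformity in \(s\in\mathcal C_{\eta,M}\). It rests entirely on the uniform statements of Lemma~\ref{lem:inverse-wishart-concentration} and on the convergence \(\gamma_s/\sqrt s\to1\) along \(s\ge\eta M\). I expect no genuine obstacle here: the substantive random-matrix input (the inverse-Wishart variance bounds) has already been established in that lemma.
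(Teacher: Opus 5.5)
Your proof is correct and takes the same approach as the paper. Both arguments use the conditional identity $\Var_z(z^\top W^{-1}z)=2\tr(W^{-2})$, the $\tfrac12$-Lipschitz property of $x\mapsto\sqrt{1+x}$, the two inverse-Wishart trace estimates, and $\gamma_s/\sqrt s\to1$ along central $s$; the only difference is ordering, since you compare $\Ebb_z\sqrt{1+Q}$ with $\sqrt{1+\tr(W^{-1})}$ conditionally via Cauchy--Schwarz, while the paper first shows $\sqrt{1+Q}=\beta^{-1/2}+o_{L^2(W,z)}(1)$ jointly and then averages over $z$ by Jensen, and your explicit splitting of the prefactor error is a slightly more careful version of the paper's last step.
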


\begin{proof}
Let \(W=CC^\top\) and \(z\sim N(0,I_m)\), independent of \(W\).  Conditional on \(W\),
\[
        \Ebb_z z^\top W^{-1}z=\tr(W^{-1})
\]
and
\[
        \Var_z(z^\top W^{-1}z)=2\tr(W^{-2}).
\]
By Lemma~\ref{lem:inverse-wishart-concentration},
\[
        z^\top W^{-1}z
        =\frac{1-\beta}{\beta}+o_{L^2}(1).
\]
The map \(x\mapsto\sqrt{1+x}\) is Lipschitz on \([0,\infty)\), so
\[
        \sqrt{1+z^\top W^{-1}z}
        =
        \frac{1}{\sqrt\beta}+o_{L^2(W,z)}(1).
\]
Moreover, Jensen's inequality makes the passage to the conditional expectation
explicit:
\begin{align*}
        &\Ebb_W
        \left|
        \Ebb_z\sqrt{1+z^\top W^{-1}z}
        -
        \frac{1}{\sqrt\beta}
        \right|^2
        \\
        &\qquad\le
        \Ebb_{W,z}
        \left|
        \sqrt{1+z^\top W^{-1}z}
        -
        \frac{1}{\sqrt\beta}
        \right|^2
        =o(1).
\end{align*}
Consequently,
\[
        \Ebb_z\sqrt{1+z^\top W^{-1}z}
        =
        \frac{1}{\sqrt\beta}+o_{L^2(W)}(1).
\]
Therefore, by \eqref{eq:theta-def},
\[
        \Theta_{M,s}(C)
        =
        \frac{\gamma_s}{\sqrt M}
        \left(\frac{1}{\sqrt\beta}+o_{L^2}(1)\right).
\]
Since \(s=\beta M\) and \(\gamma_s/\sqrt s\to\gamma_\ast\),
\[
        \frac{\gamma_s}{\sqrt M}\frac1{\sqrt\beta}
        =
        \frac{\gamma_s}{\sqrt s}
        \longrightarrow \gamma_\ast.
\]
This proves \eqref{eq:theta-conc} and hence \eqref{eq:theta-var-zero}.
\end{proof}

\subsection{Central-overlap asymptotic independence}

\begin{proposition}[Central-overlap asymptotic independence]
\label{prop:central-asymptotic-independence}
Let \(t_M=w_M/K_M\), where \(w_M\to\infty\) and \(w_M=e^{o(M)}\).  Then, uniformly for
\(s\in\mathcal C_{\eta,M}\),
\begin{equation}
        q_{M,s}(t_M)=(1+o(1))p_M(t_M)^2.
        \label{eq:central-asymptotic-independence}
\end{equation}
\end{proposition}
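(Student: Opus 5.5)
We combine the identity \eqref{eq:qCsquare}, $q_{M,s}(t_M)=\Ebb_C\,q_C(t_M)^2$, with the conditional linearization of Lemma~\ref{lem:conditional-linearization} and the coefficient concentration of Lemma~\ref{lem:theta-concentration}. The argument computes $q_{M,s}(t_M)$ and $p_M(t_M)^2$ separately, and shows that both equal $\gamma_\ast^2t_M^2(1+o(1))$ uniformly over central $s$.

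For $q_{M,s}$, we write $q_C(t_M)=t_M\Theta_{M,s}(C)+r_{M,s}(C)$. Squaring, taking $\Ebb_C$, and applying Cauchy--Schwarz to the cross term gives
\[
        \Ebb_C q_C(t_M)^2
        =
        t_M^2\,\Ebb_C\Theta_{M,s}(C)^2
        +O\bigl(t_M\norm{\Theta_{M,s}}_{L^2}\norm{r_{M,s}}_{L^2}\bigr)
        +\Ebb_C r_{M,s}(C)^2 .
\]
By \eqref{eq:theta-conc}, $\Theta_{M,s}\to\gamma_\ast$ in $L^2(C)$. Hence $\Ebb_C\Theta_{M,s}^2=\gamma_\ast^2+o(1)$ and $\norm{\Theta_{M,s}}_{L^2}=O(1)$. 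By \eqref{eq:conditional-remainder-L2}, $\norm{r_{M,s}}_{L^2}=o(t_M)$. Together these give $q_{M,s}(t_M)=\gamma_\ast^2t_M^2(1+o(1))$. Every error term is uniform in $s\in\mathcal C_{\eta,M}$, because the two cited lemmas are uniform there. Since $\gamma_\ast=1>0$, the relative error $o(1)$ is legitimate.

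For $p_M$, there are two routes. The simplest is to average the linearization over $C$ at the same central $s$. Since $p_M(t)=\Pbb(E_T(t))=\Ebb_C q_C(t)$ for any shared block, we get
\[
        p_M(t_M)=t_M\Ebb_C\Theta_{M,s}(C)+\Ebb_C r_{M,s}(C).
\]
Here $|\Ebb_C r_{M,s}|\le\norm{r_{M,s}}_{L^2}=o(t_M)$ and $\Ebb_C\Theta_{M,s}=\gamma_\ast+o(1)$, so $p_M(t_M)=\gamma_\ast t_M(1+o(1))$. Squaring this matches the first computation. As a consistency check, this agrees with the square hard-edge asymptotic $\Pbb\{\smin(A_T)\le t/\sqrt M\}\sim\gamma_M t/\sqrt M$ with $\gamma_M/\sqrt M\to1$, i.e.\ \eqref{eq:gamma-s-def}--\eqref{eq:gamma-limit} at $s=M$. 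That identity would serve as an alternative derivation if one worried about using the linearization for the marginal. The ratio of the two asymptotics then gives \eqref{eq:central-asymptotic-independence}.

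The main obstacle is already absorbed into Lemma~\ref{lem:conditional-linearization}: the remainder must be controlled in $L^2$, not merely in mean, since the second moment of $q_C$ is what enters. What remains is bookkeeping of uniformity. I would make explicit that the $o(1)$ terms in Lemmas~\ref{lem:conditional-linearization} and~\ref{lem:theta-concentration} are bounded by a single sequence $\epsilon_M\to0$ that is independent of $s\in\mathcal C_{\eta,M}$, so that the final relative error is uniform. I would also check the convergence $\gamma_s/\sqrt s\to\gamma_\ast$ uniformly over $s\ge\eta M$, which holds trivially since $s\to\infty$.
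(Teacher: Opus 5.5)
Your proposal is correct and follows the paper's proof essentially line for line. Both write $q_{M,s}(t_M)=\Ebb_C q_C(t_M)^2$ and $p_M(t_M)=\Ebb_C q_C(t_M)$, insert the linearization $q_C=t_M\Theta_{M,s}+r_{M,s}$ with its $L^2$ remainder bound, and use $\Theta_{M,s}=\gamma_\ast+o_{L^2}(1)$ to obtain $q_{M,s}(t_M)\sim\gamma_\ast^2t_M^2$ and $p_M(t_M)\sim\gamma_\ast t_M$. Your Cauchy--Schwarz treatment of the cross term, the uniformity bookkeeping, and the cross-check of $p_M(t_M)\sim t_M$ against the square hard-edge corollary only make explicit what the paper leaves implicit.
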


\begin{proof}
For pairs with difference size \(s\), conditioning on the shared block \(C\) gives
\[
        q_{M,s}(t_M)=\Ebb_C q_C(t_M)^2,
        \qquad
        p_M(t_M)=\Ebb_C q_C(t_M).
\]
By Lemma~\ref{lem:conditional-linearization},
\[
        q_C(t_M)=t_M\Theta_{M,s}(C)+r_{M,s}(C),
        \qquad
        \Ebb_C|r_{M,s}(C)|^2=o(t_M^2).
\]
By Lemma~\ref{lem:theta-concentration},
\[
        \Theta_{M,s}(C)=\gamma_\ast+o_{L^2}(1).
\]
Hence
\[
        q_{M,s}(t_M)
        =t_M^2\Ebb_C\Theta_{M,s}(C)^2+o(t_M^2)
        =t_M^2\gamma_\ast^2+o(t_M^2),
\]
and similarly
\[
        p_M(t_M)
        =t_M\Ebb_C\Theta_{M,s}(C)+o(t_M)
        =t_M\gamma_\ast+o(t_M).
\]
Dividing the two asymptotics gives \eqref{eq:central-asymptotic-independence}.
\end{proof}

\subsection{High-probability instability theorem}

\begin{theorem}[High-probability Gaussian instability]
\label{thm:high-probability-gaussian-instability}
Let
\[
        A\in\R^{(2M-1)\times M}
\]
have iid standard Gaussian entries.  Let \(w_M\to\infty\) satisfy \(w_M=e^{o(M)}\).  Then
\begin{equation}
        \Pbb\left\{
        \omega(A)
        \le
        \frac{w_M}{\sqrt M\binom{2M-1}{M}}
        \right\}
        \longrightarrow 1.
        \label{eq:hp-omega}
\end{equation}
Consequently,
\begin{equation}
        \Pbb\left\{
        \frac{\omega(A)}{\max_{k\in[2M-1]}\|A_k\|_2}
        \le
        \frac{w_M}{M\binom{2M-1}{M}}
        \right\}
        \longrightarrow 1.
        \label{eq:hp-normalized}
\end{equation}
\end{theorem}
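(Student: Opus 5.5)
We run the second-moment method at the scale \(t_M=w_M/K_M\) and show \(\Ebb Z_{t_M}^2\le(1+o(1))(\Ebb Z_{t_M})^2\). Chebyshev (or Paley--Zygmund) then gives \(\Pbb\{Z_{t_M}>0\}\to1\), and \(Z_{t_M}>0\) implies \(\omega(A)\le t_M/\sqrt M=w_M/(\sqrt M K_M)\), which is \eqref{eq:hp-omega}.

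Since \(p_M\) is monotone, we may replace \(w_M\) by \(\min\{w_M,\log M\}\) if needed. So assume \(t_M\to0\), with \(w_M\to\infty\) subexponential. By \eqref{eq:square-tail}, \(\Ebb Z_{t_M}=K_Mp_M(t_M)\asymp w_M\to\infty\). By \eqref{eq:second-moment-profile},
\[
\Ebb Z_{t_M}^2=K_Mp_M(t_M)+K_M\sum_{s\in\mathcal C_{\eta,M}}a_sq_{M,s}(t_M)+K_M\sum_{s\notin\mathcal C_{\eta,M},\,s\ge1}a_sq_{M,s}(t_M).
\]
We bound the three terms in turn.

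\emph{Diagonal term.} It equals \(\Ebb Z_{t_M}=o((\Ebb Z_{t_M})^2)\), because \(\Ebb Z_{t_M}\to\infty\).

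\emph{Central term.} Proposition~\ref{prop:central-asymptotic-independence} gives \(q_{M,s}(t_M)=(1+o(1))p_M(t_M)^2\), uniformly in central \(s\). Since \(\sum_sa_s\le K_M\), the central term is at most \((1+o(1))K_M^2p_M(t_M)^2=(1+o(1))(\Ebb Z_{t_M})^2\).

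\emph{Noncentral term.} We use Proposition~\ref{prop:overlap-profile-bound}. Its hypothesis \(t\le c\sqrt{s/M}\) holds because \(t_M\) is exponentially small. For \(2\le s\), \(q_{M,s}(t_M)\le Ct_M^2M/s\le Ct_M^2M\). For \(s=1\), \(q_{M,1}(t_M)\le Ct_M^2M\log(e/(t_M\sqrt M))=O(t_M^2M^2)\), using \(\log(1/t_M)=O(M)\). The hypergeometric mass is
\[
\sum_{s\notin\mathcal C_{\eta,M}}a_s=K_M\,\Pbb\{S\notin[\eta M,(1-\eta)M]\}\le K_Me^{-c_\eta M},
\]
since \(S=|T\setminus U|\) concentrates near \(M/2\) (Appendix hypergeometric estimate). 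Hence the noncentral term is at most \(CK_M^2t_M^2M^2e^{-c_\eta M}=o(K_M^2t_M^2)=o((\Ebb Z_{t_M})^2)\), because \(p_M(t_M)\asymp t_M\). This noncentral bound holds for any \(t\le c\sqrt{s/M}\) and is the point where the earlier non-centered estimate is used.

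Combining the three bounds gives \(\Var Z_{t_M}=o((\Ebb Z_{t_M})^2)\), and \eqref{eq:hp-omega} follows.

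For \eqref{eq:hp-normalized}, note that \(\max_k\|A_k\|_2\ge\|A_1\|_2\ge\sqrt M/2\) with probability tending to one, by chi-square concentration. Intersecting this event with \eqref{eq:hp-omega} gives the normalized bound with \(2w_M\) in place of \(w_M\). Since \(w_M\) is arbitrary, apply the argument to \(w_M/2\), which is still divergent and subexponential.

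The main obstacle is the uniformity in the central step. Proposition~\ref{prop:central-asymptotic-independence} must hold with an \(o(1)\) that is uniform over \(s\in\mathcal C_{\eta,M}\), so that summing over central \(s\) does not accumulate error. The statement provides this uniformity, and it rests on the uniformity asserted in Lemma~\ref{lem:conditional-linearization}.

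A smaller point is that truncating \(w_M\) changes nothing: the event in \eqref{eq:hp-omega} is monotone in \(w_M\), and all inputs remain valid for the truncated sequence.
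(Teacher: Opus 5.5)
Your proposal is correct and follows the paper's proof essentially step for step: the second moment of $Z_{t_M}$ at $t_M=w_M/K_M$, a diagonal/central/noncentral split, Proposition~\ref{prop:central-asymptotic-independence} for central overlaps, Proposition~\ref{prop:overlap-profile-bound} with hypergeometric concentration for the rest, then Chebyshev and the $w_M/2$ substitution for the normalized bound. Your deviations are harmless: the crude $p_M(t_M)\asymp t_M$ from \eqref{eq:square-tail} in place of the triangular-array asymptotic, bounding the noncentral sum by $\max_s q_{M,s}$ times the exponentially small mass, and the truncation of $w_M$, which is unnecessary since $t_M\to0$ already.
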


\begin{proof}
Let
\[
        Z_t=\sum_{|T|=M}\1_{E_T(t)},
        \qquad
        E_T(t)=\left\{\smin(A_T)\le\frac{t}{\sqrt M}\right\}.
\]
Set \(t=t_M=w_M/K_M\).  Since \(t_M\to0\), the triangular-array estimate
\eqref{eq:triangular-square-hard-edge} in
Corollary~\ref{cor:uniform-square-hard-edge-scaling} gives
\[
        p_M(t_M):=\Pbb(E_T(t_M))=(1+o(1))t_M.
\]
Therefore
\[
        \Ebb Z_t=K_Mp_M(t)\sim w_M\to\infty.
\]
The variance decomposes as
\begin{equation}
        \Var(Z_t)
        \le
        K_Mp_M(t)
        +K_M\sum_{s=1}^{M-1}a_s\bigl(q_{M,s}(t)-p_M(t)^2\bigr),
        \label{eq:variance-profile-hp}
\end{equation}
where
\[
        a_s=\binom Ms\binom{M-1}s.
\]
Divide by \((\Ebb Z_t)^2=K_M^2p_M(t)^2\).  The diagonal term gives
\[
        \frac{K_Mp_M(t)}{K_M^2p_M(t)^2}
        =
        \frac{1}{K_Mp_M(t)}
        \sim
        \frac{1}{w_M}
        \longrightarrow0.
\]

For central overlaps \(s\in\mathcal C_{\eta,M}\), Proposition~\ref{prop:central-asymptotic-independence} gives
\[
        q_{M,s}(t)-p_M(t)^2=o(p_M(t)^2)
\]
uniformly.  Therefore
\[
        \frac1{K_Mp_M(t)^2}
        \sum_{s\in\mathcal C_{\eta,M}}
        a_s\bigl(q_{M,s}(t)-p_M(t)^2\bigr)
        =o(1),
\]
because \(\sum_{s=1}^{M-1}a_s=K_M-1\).

For noncentral overlaps, use the non-centered profile bound from
Proposition~\ref{prop:overlap-profile-bound} and the endpoint estimate \eqref{eq:qMs-s1}.  Since
\(p_M(t)\asymp t\), this gives
\[
        q_{M,s}(t)\le C p_M(t)^2\frac{M}{s}
\]
for \(s\ge2\), with a logarithmic endpoint correction at \(s=1\).  If \(S\) is the
overlap-difference parameter for a uniformly random \(M\)-subset \(U\), with \(T\) fixed,
then
\[
        \Pbb\{S=s\}=\frac{a_s}{K_M}.
\]
This hypergeometric variable is concentrated around \(M/2\).  Hence
\[
        \frac1{K_M}
        \sum_{\substack{1\le s\le M-1\\s\notin\mathcal C_{\eta,M}}}
        a_s\frac{M}{s}
        \longrightarrow0,
\]
while the \(s=1\) logarithmic term is exponentially negligible compared with \(K_M\)
even after multiplication by \(\log(1/t_M)=O(M+\log w_M)\).  Thus the noncentral
contribution to \eqref{eq:variance-profile-hp}, normalized by \((\Ebb Z_t)^2\), is also
\(o(1)\).

Consequently,
\[
        \frac{\Var(Z_t)}{(\Ebb Z_t)^2}\longrightarrow0.
\]
By Chebyshev's inequality,
\[
        \Pbb\{Z_t=0\}
        \le
        \frac{\Var(Z_t)}{(\Ebb Z_t)^2}
        \longrightarrow0.
\]
Therefore \(Z_t>0\) with probability tending to one.  On this event, at least one
\(M\times M\) row minor satisfies
\[
        \smin(A_T)\le\frac{t_M}{\sqrt M}.
\]
By the critical-threshold reduction,
\[
        \omega(A)=\min_{|T|=M}\smin(A_T),
\]
and hence
\[
        \omega(A)\le\frac{t_M}{\sqrt M}
        =
        \frac{w_M}{\sqrt M K_M}
\]
with probability tending to one.  This proves \eqref{eq:hp-omega}.

Finally,
\[
        \max_{k\in[2M-1]}\|A_k\|_2\ge \tfrac12\sqrt M
\]
with probability tending to one, since even the first row has \(\chi^2_M\) norm squared.
Apply \eqref{eq:hp-omega} with \(w_M/2\) in place of \(w_M\), and intersect the two
events. This proves \eqref{eq:hp-normalized} exactly as stated.
\end{proof}

\begin{corollary}[Typical exponential rate]
\label{cor:typical-exponential-rate}
Let
\[
        K_M=\binom{2M-1}{M},
\]
and let \(A\in\mathbb R^{(2M-1)\times M}\) have iid standard Gaussian
entries. For every sequence \(w_M\to\infty\) satisfying
\(w_M=e^{o(M)}\),
\[
        \mathbb P\left\{
        \frac{1}{w_M\sqrt M\,K_M}
        \le \omega(A)
        \le
        \frac{w_M}{\sqrt M\,K_M}
        \right\}
        \longrightarrow 1.
\]
Consequently,
\[
        -\frac{1}{M}\log\omega(A)
        \longrightarrow \log 4
\]
in probability.
\end{corollary}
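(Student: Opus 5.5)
The statement has two halves. The upper bound is exactly Theorem~\ref{thm:high-probability-gaussian-instability}, equation \eqref{eq:hp-omega}, so only the lower bound and the passage to the logarithmic rate remain. For the lower bound I would use a first-moment (union) bound over all \(K_M\) square minors. Since \(A\) is full spark almost surely, the critical-threshold reduction gives \(\omega(A)=\min_{|T|=M}\smin(A_T)\). Hence
\[
        \Pbb\left\{\omega(A)<\frac{1}{w_M\sqrt M\,K_M}\right\}
        \le
        \sum_{|T|=M}\Pbb\left\{\smin(A_T)\le\frac{t}{\sqrt M}\right\}
        =K_M\,p_M(t),
        \qquad t:=\frac{1}{w_MK_M}.
\]
Because \(t\to0\), the upper half of the square hard-edge bound \eqref{eq:square-tail} gives \(p_M(t)\le C_0t\) once \(t<t_0\). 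The right side is therefore at most \(C_0/w_M\to0\).

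Intersecting this event with the high-probability event from \eqref{eq:hp-omega} gives the two-sided statement with probability tending to one. The only point needing care is that \eqref{eq:square-tail} holds with absolute constants uniformly in \(M\). This is recorded in Appendix~\ref{app:square-hard-edge}; alternatively, Corollary~\ref{cor:uniform-square-hard-edge-scaling} gives \(p_M(t)=(1+o(1))t\) directly. So I expect no real obstacle here.

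For the rate, fix \(\eps>0\) and take \(w_M:=\max\{M,\log M\}\), or any fixed subexponential sequence tending to infinity. On the event above,
\[
        \left|-\frac1M\log\omega(A)-\frac1M\log\bigl(\sqrt M\,K_M\bigr)\right|\le\frac{\log w_M}{M}\to0.
\]
It then remains to check that \(\frac1M\log(\sqrt M K_M)\to\log4\). Stirling's formula gives \(K_M=\binom{2M-1}{M}=\tfrac12\binom{2M}{M}\asymp 4^M/\sqrt M\), so \(\sqrt M K_M=4^{M+o(1)}\) and the claim follows. Consequently
\[
        \Pbb\left\{\left|-\tfrac1M\log\omega(A)-\log4\right|>\eps\right\}\to0,
\]
which is convergence in probability.
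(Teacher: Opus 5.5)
Your proposal is correct and is essentially the paper's proof: the upper bound is quoted from Theorem~\ref{thm:high-probability-gaussian-instability}, the lower bound is the union bound over the $K_M$ minors combined with the square hard-edge estimate $p_M(t)\le C_0t$, giving $C_0/w_M\to0$, and the rate follows from $\log(\sqrt M\,K_M)=M\log4+O(\log M)$ together with $\log w_M=o(M)$. One harmless slip: Stirling gives $\sqrt M\,K_M\sim 4^M/(2\sqrt{\pi})$, which is $4^{M+O(1)}$ rather than $4^{M+o(1)}$, and this still yields $\frac1M\log(\sqrt M\,K_M)\to\log4$.
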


\begin{proof}
The upper bound is the preceding theorem. For the lower bound, use the
critical-threshold identity
\[
        \omega(A)=\min_{|T|=M}\sigma_{\min}(A_T)
\]
and the union bound:
\[
\begin{aligned}
        \mathbb P\left\{
        \omega(A)\le
        \frac{1}{w_M\sqrt M\,K_M}
        \right\}
        &\le
        K_M
        \mathbb P\left\{
        \sigma_{\min}(G_M)
        \le
        \frac{1}{w_M\sqrt M\,K_M}
        \right\}.
\end{aligned}
\]
The square Gaussian hard-edge estimate
\[
        \mathbb P\left\{
        \sigma_{\min}(G_M)\le \frac{x}{\sqrt M}
        \right\}\le Cx
\]
therefore gives
\[
        \mathbb P\left\{
        \omega(A)\le
        \frac{1}{w_M\sqrt M\,K_M}
        \right\}
        \le \frac{C}{w_M}\longrightarrow0.
\]
Finally,
\[
        \log(\sqrt M\,K_M)=M\log4+O(\log M),
\]
while \(\log w_M=o(M)\), which proves the logarithmic convergence.
\end{proof}

\newpage

\appendix
\section{Square Gaussian hard-edge facts}
\label{app:square-hard-edge}

This appendix derives the square Gaussian hard-edge estimates used in the paper.  Let
\[
        G_s\in\R^{s\times s}
\]
be standard Gaussian, and write
\[
        0<\rho_1\le\rho_2\le\cdots\le\rho_s
\]
for its ordered singular values.

We use the empty-dimensional convention \(\mathcal Z_{0,N}=1\).
For integers \(1\le m\le N\), define the singular-value partition function
\begin{equation}
        \mathcal Z_{m,N}
        :=
        \int_{(0,\infty)^m}
        e^{-\frac12\sum_{i=1}^m x_i^2}
        \prod_{i=1}^m x_i^{N-m}
        \prod_{1\le i<j\le m}|x_j^2-x_i^2|
        \,dx_1\cdots dx_m.
        \label{eq:singular-value-partition-function}
\end{equation}
The Laguerre--Selberg integral gives
\begin{equation}
        \mathcal Z_{m,N}
        =
        2^{mN/2-m}
        \prod_{j=1}^m
        \frac{
        \Gamma(1+j/2)
        \Gamma((N-m+j)/2)
        }{
        \Gamma(3/2)
        }.
        \label{eq:singular-value-partition-evaluation}
\end{equation}
Equivalently, the ordered singular values of an \(m\times N\) standard Gaussian matrix
have density
\begin{equation}
        \frac{m!}{\mathcal Z_{m,N}}
        e^{-\frac12\sum_{i=1}^m x_i^2}
        \prod_{i=1}^m x_i^{N-m}
        \prod_{1\le i<j\le m}(x_j^2-x_i^2)
        \label{eq:ordered-singular-value-density}
\end{equation}
on the chamber
\[
        0<x_1<\cdots<x_m.
\]
These formulas are the singular-value form of the real Laguerre orthogonal ensemble; see,
for example, \cite{Edelman1988,TaoVu2010}.

\begin{lemma}[Square Gaussian hard-edge package]
\label{lem:square-hard-edge-package}
The following facts hold for the real Gaussian ensemble.

\begin{enumerate}[label=(\alph*)]
    \item One may choose the singular-value decomposition
    \[
            G_s=U\Sigma V^\top
    \]
    so that \(U\) and \(V\) are Haar orthogonal and are independent of the singular
    values.  In particular, the left singular direction corresponding to \(\rho_1\) is
    Haar-uniform and independent of all singular values.

    \item The density-at-zero coefficient for \(\rho_1\) is
    \begin{equation}
            \gamma_s
            =
            \frac{s}{\sqrt2}
            \frac{\Gamma((s+1)/2)}{\Gamma((s+2)/2)}.
            \label{eq:gamma-s-explicit}
    \end{equation}
    There are absolute constants \(C,r_0>0\) such that
    \begin{equation}
            0
            \le
            \gamma_s r-\Pbb\{\rho_1\le r\}
            \le
            Csr^2,
            \qquad
            0\le r\le r_0.
            \label{eq:uniform-square-hard-edge-expansion}
    \end{equation}
    Moreover,
    \begin{equation}
            \frac{\gamma_s}{\sqrt{s}}
            =
            1+O(s^{-1}),
            \label{eq:gamma-s-asymptotic}
    \end{equation}
    and hence
    \[
            \frac{\gamma_s}{\sqrt{s}}\longrightarrow1.
    \]

    \item For \(s\ge2\) and every \(r\ge0\),
    \begin{equation}
            \Pbb\{\rho_2\le r\}
            \le
            \frac{s(s-1)}{12}r^4.
            \label{eq:second-singular-value-r4}
    \end{equation}
    In particular, after taking \(r_0\le1\),
    \begin{equation}
            \Pbb\{\rho_2\le r\}
            \le
            Cs^2r^3,
            \qquad
            0\le r\le r_0.
            \label{eq:second-singular-value-r3}
    \end{equation}
\end{enumerate}
\end{lemma}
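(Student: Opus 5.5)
The plan is to prove all three parts from the ordered singular-value density \eqref{eq:ordered-singular-value-density} with \(m=N=s\), where the factor \(\prod x_i^{N-m}\) disappears, together with the Laguerre--Selberg evaluation \eqref{eq:singular-value-partition-evaluation}. For part (a) I would use bi-orthogonal invariance: \(G_s\stackrel{d}{=}O_1G_sO_2\) for independent Haar \(O_1,O_2\in O(s)\) independent of \(G_s\). Take any measurable SVD \(G_s=U_0\Sigma V_0^\top\), with distinct singular values almost surely. Then \(O_1G_sO_2=(O_1U_0)\Sigma(O_2^\top V_0)^\top\). Conditionally on \(G_s\), the matrices \(O_1U_0\) and \(O_2^\top V_0\) are independent and Haar, so they are Haar and independent of \(\Sigma\). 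In particular the first column \(u_1\), which pairs with \(\rho_1\), is uniform on \(S^{s-1}\) and independent of all singular values.

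For part (b), let \(f\) be the density of \(\rho_1\). Integrating out \(x_2<\dots<x_s\) gives
\[
f(x)=\frac{s!}{\mathcal Z_{s,s}}e^{-x^2/2}\int_{x<x_2<\cdots<x_s}e^{-\frac12\sum_{j\ge2}x_j^2}\prod_{j\ge2}(x_j^2-x^2)\prod_{2\le i<j}(x_j^2-x_i^2)\,dx .
\]
At \(x=0\) the inner integral is the ordered form of \(\mathcal Z_{s-1,s+1}\), since \(\prod_j x_j^2=\prod_j x_j^{(s+1)-(s-1)}\). Hence \(\gamma_s=f(0)=s\,\mathcal Z_{s-1,s+1}/\mathcal Z_{s,s}\). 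Inserting the Selberg product, most factors telescope, and this should give \eqref{eq:gamma-s-explicit}. Stirling's formula in the form \(\Gamma(z+\tfrac12)/\Gamma(z+1)=z^{-1/2}(1+O(z^{-1}))\) then gives \eqref{eq:gamma-s-asymptotic}.

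For the expansion \eqref{eq:uniform-square-hard-edge-expansion}, the lower bound is monotonicity. Indeed \(e^{-x^2/2}\le1\), \(x_j^2-x^2\le x_j^2\), and the domain shrinks as \(x\) grows, so \(f(x)\le f(0)\) and therefore \(\Pbb\{\rho_1\le r\}\le\gamma_s r\). For the upper bound I would split at \(r=s^{-1/2}\).
\begin{itemize}
\item If \(r\ge s^{-1/2}\), then \(\gamma_s r\le C\sqrt s\,r\le Csr^2\) trivially.
\item If \(r<s^{-1/2}\), write \(f(0)-f(x)=f(0)\,\Ebb_\nu[1-\Phi_x]\), where \(\nu\) is the \((s-1)\times(s+1)\) Laguerre singular-value law and \(\Phi_x=\mathbf 1_{\{x_2>x\}}e^{-x^2/2}\prod_j(1-x^2/x_j^2)\). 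Bound \(1-\Phi_x\le \mathbf 1_{\{x_2\le x\}}+x^2/2+x^2\sum_j x_j^{-2}\). The inverse-Wishart mean \(\Ebb\tr W^{-1}=(s-1)/((s+1)-(s-1)-1)=s-1\) (Appendix~\ref{app:inverse-wishart}) controls the sum. The event \(\{x_2\le x\}\) has \(\nu\)-probability \(O(sx^3)\), because the \(x^2\) weight makes the hard edge cubic.
\end{itemize}
In the second case this gives \(f(0)-f(x)\le Cf(0)\,s\,x^2\). Integrating, \(\gamma_s r-\Pbb\{\rho_1\le r\}\le C\gamma_s s r^3\le C s^{3/2}r^3\le Csr^2\), using \(r<s^{-1/2}\).

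For part (c), on \(\{\rho_2\le r\}\) both \(x_1\) and \(x_2\) lie in \((0,r)\). Integrate out \(x_3,\dots,x_s\) and use \(x_j^2-x_i^2\le x_j^2\) for \(i\le2<j\) and \(e^{-(x_1^2+x_2^2)/2}\le1\). The remaining integral is dominated by the ordered form of \(\mathcal Z_{s-2,s+2}\), giving
\[
\Pbb\{\rho_2\le r\}\le\frac{s!}{(s-2)!}\frac{\mathcal Z_{s-2,s+2}}{\mathcal Z_{s,s}}\int_{0<x_1<x_2<r}(x_2^2-x_1^2)\,dx_1dx_2=\frac{s!}{(s-2)!}\frac{\mathcal Z_{s-2,s+2}}{\mathcal Z_{s,s}}\cdot\frac{r^4}{6}.
\]
The Selberg evaluation should reduce the ratio \(\mathcal Z_{s-2,s+2}/\mathcal Z_{s,s}\) to an explicit constant, yielding \(s(s-1)r^4/12\). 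Then \eqref{eq:second-singular-value-r3} follows from \(r^4\le r^3\) for \(r\le1\).

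I expect the main obstacle to be the exact Gamma-function bookkeeping in the two Selberg ratios. The explicit formula for \(\gamma_s\) and the constant \(1/12\) both depend on it, and I would verify them at \(s=1,2\) before trusting the telescoping. The uniform \(Csr^2\) remainder is the other delicate point, and the split at \(r=s^{-1/2}\) is what keeps it uniform in \(s\).
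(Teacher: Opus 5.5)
Your proposal is correct and follows the paper's proof essentially step for step. The shared steps are:
\begin{itemize}
\item bi-orthogonal invariance for (a);
\item the ratio $\gamma_s=s\,\mathcal Z_{s-1,s+1}/\mathcal Z_{s,s}$, where the Selberg telescoping does give the stated Gamma quotient, and likewise $\mathcal Z_{s-2,s+2}/\mathcal Z_{s,s}=\Gamma(3/2)/\Gamma(1/2)=\tfrac12$ in (c);
\item the representation of the density of $\rho_1$ as $\gamma_s$ times an expectation over the $(s-1)\times(s+1)$ singular-value law, with the defect controlled by the inverse-Wishart mean;
\item the same two-value domination bound in (c).
\end{itemize}

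One correction: the cubic hard-edge bound for that $(s-1)\times(s+1)$ law is $\nu\{x_2\le x\}\le C s^{3/2}x^3$, not $O(sx^3)$, which is false for large $s$. The paper takes the correct bound from the rectangular hard-edge lemma with deficit $2$. Your own domination trick from (c), together with your two ratios, gives exactly $\frac{s(s-1)}{6\gamma_s}x^3$. Since $s^{3/2}x^3\le s\,x^2$ for $x<s^{-1/2}$, your conclusion $f(0)-f(x)\le C f(0)\,s\,x^2$, and hence the rest of (b), is unaffected.
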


\begin{proof}
The density of \(G_s\) is invariant under
\[
        G_s\longmapsto Q_1G_sQ_2,
        \qquad
        Q_1,Q_2\in O(s).
\]
Consequently, conditionally on the singular values, the left and right singular directions
are Haar-distributed.  After resolving the harmless sign ambiguity in the SVD by independent
random signs, \(U\) and \(V\) may be taken Haar and independent of the singular values.  This
proves part~(a).

We next derive the density-at-zero coefficient and its uniform remainder.  Let \(h_s(t)\)
denote the density of \(\rho_1\).  From
\eqref{eq:ordered-singular-value-density},
\begin{align}
        h_s(t)
        &=
        \frac{s!}{\mathcal Z_{s,s}}
        e^{-t^2/2}
        \int_{t<x_2<\cdots<x_s}
        e^{-\frac12\sum_{j=2}^s x_j^2}
        \prod_{j=2}^s(x_j^2-t^2)
        \prod_{2\le i<j\le s}(x_j^2-x_i^2)
        \,dx_2\cdots dx_s.
        \label{eq:smallest-singular-density}
\end{align}
At \(t=0\), the remaining integral is
\[
        \frac{1}{(s-1)!}\mathcal Z_{s-1,s+1}.
\]
Therefore,
\begin{equation}
        \gamma_s
        :=
        h_s(0)
        =
        s\frac{\mathcal Z_{s-1,s+1}}{\mathcal Z_{s,s}}.
        \label{eq:gamma-s-partition-ratio}
\end{equation}
Using \eqref{eq:singular-value-partition-evaluation} and canceling the common factors,
\[
        \frac{\mathcal Z_{s-1,s+1}}{\mathcal Z_{s,s}}
        =
        \frac{1}{\sqrt2}
        \frac{\Gamma((s+1)/2)}{\Gamma((s+2)/2)}.
\]
Substitution into \eqref{eq:gamma-s-partition-ratio} proves
\eqref{eq:gamma-s-explicit}.

For the quantitative expansion, let
\[
        H\in\R^{(s-1)\times(s+1)}
\]
be standard Gaussian, and write
\[
        0<\tau_1<\cdots<\tau_{s-1}
\]
for its singular values.  Factoring \(x_j^2\) from each term \(x_j^2-t^2\) in
\eqref{eq:smallest-singular-density} and using
\eqref{eq:gamma-s-partition-ratio}, we obtain the exact identity
\begin{equation}
        \frac{h_s(t)}{\gamma_s}
        =
        e^{-t^2/2}
        \Ebb_H
        \left[
        1_{\{\tau_1>t\}}
        \prod_{j=1}^{s-1}
        \left(1-\frac{t^2}{\tau_j^2}\right)
        \right].
        \label{eq:smallest-density-rectangular-representation}
\end{equation}
For \(s=1\), the same identity is interpreted as
\[
        h_1(t)=\gamma_1e^{-t^2/2},
\]
and the estimates below follow directly.  We therefore assume \(s\ge2\).

Every factor inside the expectation in
\eqref{eq:smallest-density-rectangular-representation} lies in \([0,1]\).  Hence
\begin{equation}
        0\le h_s(t)\le\gamma_s.
        \label{eq:smallest-density-monotonic-upper-bound}
\end{equation}
Furthermore, using
\[
        1-\prod_{j=1}^{s-1}(1-y_j)
        \le
        \sum_{j=1}^{s-1}y_j,
        \qquad 0\le y_j\le1,
\]
we obtain
\begin{align}
        0
        \le
        1-\frac{h_s(t)}{\gamma_s}
        \le{}&
        \frac{t^2}{2}
        +
        \Pbb\{\tau_1\le t\}
        +
        t^2\Ebb\sum_{j=1}^{s-1}\tau_j^{-2}.
        \label{eq:smallest-density-defect}
\end{align}

The matrix \(HH^\top\) is Wishart with dimension \(s-1\) and \(s+1\) degrees of freedom.
The standard inverse-Wishart mean formula gives
\begin{equation}
        \Ebb(HH^\top)^{-1}
        =
        I_{s-1},
        \qquad
        \Ebb\sum_{j=1}^{s-1}\tau_j^{-2}
        =
        s-1.
        \label{eq:codimension-two-inverse-trace}
\end{equation}
The rectangular Gaussian hard-edge estimate from
Lemma~\ref{lem:rectangular-gaussian-hard-edge}, applied with column dimension \(s+1\)
and rectangular deficit \(2\), gives
\begin{equation}
        \Pbb\{\tau_1\le t\}
        \le
        C s^{3/2}t^3,
        \qquad
        0\le t\le\frac{c}{\sqrt{s}}.
        \label{eq:codimension-two-smallest-tail}
\end{equation}
Combining \eqref{eq:smallest-density-defect}--\eqref{eq:codimension-two-smallest-tail},
and decreasing \(c\) if necessary, yields
\begin{equation}
        0
        \le
        \gamma_s-h_s(t)
        \le
        C\gamma_s s t^2,
        \qquad
        0\le t\le\frac{c}{\sqrt{s}}.
        \label{eq:smallest-density-local-defect}
\end{equation}

The explicit formula \eqref{eq:gamma-s-explicit} and standard gamma-ratio bounds imply
\[
        c\sqrt{s}\le\gamma_s\le C\sqrt{s}.
\]
Integrating \eqref{eq:smallest-density-local-defect} over \(0\le t\le r\) gives
\begin{equation}
        0
        \le
        \gamma_s r-\Pbb\{\rho_1\le r\}
        \le
        Cs^{3/2}r^3,
        \qquad
        0\le r\le\frac{c}{\sqrt{s}}.
        \label{eq:smallest-cdf-local-remainder}
\end{equation}
Since \(s^{3/2}r^3\le Csr^2\) in this range, this proves
\eqref{eq:uniform-square-hard-edge-expansion} for
\(r\le c/\sqrt{s}\).

For \(c/\sqrt{s}\le r\le r_0\), the upper bound
\eqref{eq:smallest-density-monotonic-upper-bound} gives
\[
        0
        \le
        \gamma_s r-\Pbb\{\rho_1\le r\}
        \le
        \gamma_s r
        \le
        C\sqrt{s}\,r
        \le
        Csr^2.
\]
Thus \eqref{eq:uniform-square-hard-edge-expansion} holds throughout
\(0\le r\le r_0\).

Finally, the standard gamma-ratio expansion
\[
        \frac{\Gamma(x+1/2)}{\Gamma(x+1)}
        =
        x^{-1/2}\left(1+O(x^{-1})\right),
        \qquad x\to\infty,
\]
applied with \(x=s/2\), gives
\[
        \gamma_s
        =
        \frac{s}{\sqrt2}
        \left(\frac{s}{2}\right)^{-1/2}
        \left(1+O(s^{-1})\right)
        =
        \sqrt{s}\left(1+O(s^{-1})\right).
\]
This proves \eqref{eq:gamma-s-asymptotic} and part~(b).

It remains to prove the two-singular-value estimate.  Assume \(s\ge2\).  In the ordered
density \eqref{eq:ordered-singular-value-density}, write
\[
        t:=\rho_1,
        \qquad
        u:=\rho_2.
\]
On the chamber
\[
        0<t<u<x_3<\cdots<x_s,
\]
we have
\[
        x_j^2-t^2\le x_j^2,
        \qquad
        x_j^2-u^2\le x_j^2,
        \qquad j=3,\ldots,s.
\]
Therefore, after discarding \(e^{-(t^2+u^2)/2}\le1\) and enlarging the domain of the
remaining singular values,
\begin{align}
        \Pbb\{\rho_2\le r\}
        &\le
        \frac{s!}{\mathcal Z_{s,s}}
        \int_{0<t<u<r}(u^2-t^2)\,dt\,du
\nonumber\\
        &\qquad\qquad\times
        \int_{0<x_3<\cdots<x_s}
        e^{-\frac12\sum_{j=3}^s x_j^2}
        \prod_{j=3}^s x_j^4
        \prod_{3\le i<j\le s}(x_j^2-x_i^2)
        \,dx_3\cdots dx_s.
        \label{eq:second-singular-partition-bound}
\end{align}
The second integral is
\[
        \frac{1}{(s-2)!}\mathcal Z_{s-2,s+2}.
\]
Moreover,
\[
        \int_{0<t<u<r}(u^2-t^2)\,dt\,du
        =
        \frac{r^4}{6}.
\]
It follows that
\begin{equation}
        \Pbb\{\rho_2\le r\}
        \le
        \frac{s(s-1)}{6}
        \frac{\mathcal Z_{s-2,s+2}}{\mathcal Z_{s,s}}
        r^4.
        \label{eq:second-singular-partition-ratio}
\end{equation}
Using \eqref{eq:singular-value-partition-evaluation} and canceling common gamma factors gives
\begin{equation}
        \frac{\mathcal Z_{s-2,s+2}}{\mathcal Z_{s,s}}
        =
        \frac12.
        \label{eq:second-singular-partition-ratio-value}
\end{equation}
Substitution into \eqref{eq:second-singular-partition-ratio} proves
\[
        \Pbb\{\rho_2\le r\}
        \le
        \frac{s(s-1)}{12}r^4.
\]
For \(0\le r\le r_0\le1\), this also implies
\[
        \Pbb\{\rho_2\le r\}
        \le
        Cs^2r^3.
\]
This proves part~(c).
\end{proof}

\begin{corollary}[Uniform square hard-edge scaling]
\label{cor:uniform-square-hard-edge-scaling}
There is an absolute constant \(C\) such that, uniformly for
\(0\le t\le r_0\sqrt{s}\),
\begin{equation}
        \Pbb\left\{
        \rho_1\le\frac{t}{\sqrt{s}}
        \right\}
        =
        t
        +
        O\left(
        \frac{t}{s}+t^2
        \right).
        \label{eq:uniform-square-hard-edge-scaled}
\end{equation}
In particular, if \(t_s\to0\), then
\begin{equation}
        \Pbb\left\{
        \rho_1\le\frac{t_s}{\sqrt{s}}
        \right\}
        =
        (1+o(1))t_s.
        \label{eq:triangular-square-hard-edge}
\end{equation}
\end{corollary}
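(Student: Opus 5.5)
Since \(\rho_1\le t/\sqrt s\) is exactly the event whose probability we want, the plan is to apply the uniform expansion \eqref{eq:uniform-square-hard-edge-expansion} of Lemma~\ref{lem:square-hard-edge-package}(b) at the rescaled radius \(r=t/\sqrt s\). The hypothesis \(0\le t\le r_0\sqrt s\) is precisely \(0\le r\le r_0\), so the lemma applies and gives
\[
        \Pbb\left\{\rho_1\le\frac{t}{\sqrt s}\right\}
        =
        \gamma_s\frac{t}{\sqrt s}
        -
        E,
        \qquad
        0\le E\le Cs\frac{t^2}{s}=Ct^2 .
\]
The remainder is thus \(O(t^2)\), with a constant independent of \(s\). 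This is the reason to use the uniform form of the expansion rather than the local \(s^{3/2}r^3\) bound.

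For the main term I will use the gamma-ratio asymptotic \eqref{eq:gamma-s-asymptotic}, namely \(\gamma_s/\sqrt s=1+O(s^{-1})\). It gives \(\gamma_s t/\sqrt s=t+O(t/s)\). To have one absolute constant for every \(s\ge1\), I note that \(\gamma_s/\sqrt s\) is bounded for each small \(s\) by the explicit formula \eqref{eq:gamma-s-explicit}. The \(O(s^{-1})\) statement then holds for all \(s\) after enlarging \(C\). Combining the two estimates yields \eqref{eq:uniform-square-hard-edge-scaled}.

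For \eqref{eq:triangular-square-hard-edge} I take \(t_s\to0\). Then \(t_s\le r_0\sqrt s\) holds eventually, and indeed for every \(s\) once \(t_s\le r_0\), so the first part applies. The error term is \(O(t_s/s+t_s^2)=t_s\,O(1/s+t_s)\), and this is \(o(t_s)\) as \(s\to\infty\).

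I expect no real obstacle. The one point to check is that all constants are absolute, in particular that the \(r^2\) coefficient in \eqref{eq:uniform-square-hard-edge-expansion} is exactly linear in \(s\), since that is what makes the remainder scale-free in \(t\). This linear-in-\(s\) coefficient is already part of the lemma as stated.
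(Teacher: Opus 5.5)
Your proposal is correct and matches the paper's proof. Both substitute \(r=t/\sqrt{s}\) into the uniform expansion of Lemma~\ref{lem:square-hard-edge-package}(b) to obtain \(\frac{\gamma_s}{\sqrt{s}}\,t+O(t^2)\), and then apply \(\gamma_s/\sqrt{s}=1+O(s^{-1})\). Your added remarks on uniformity for small \(s\) and on the triangular-array consequence only spell out what the paper leaves implicit.
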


\begin{proof}
Substitute \(r=t/\sqrt{s}\) into
\eqref{eq:uniform-square-hard-edge-expansion}:
\[
        \Pbb\left\{
        \rho_1\le\frac{t}{\sqrt{s}}
        \right\}
        =
        \frac{\gamma_s}{\sqrt{s}}t+O(t^2).
\]
Now use
\[
        \frac{\gamma_s}{\sqrt{s}}
        =
        1+O(s^{-1}).
\]
\end{proof}

\begin{remark}
The estimates above are finite-dimensional and uniform in \(s\); no interchange between
the limits \(r\downarrow0\) and \(s\to\infty\) is required.  The one-value estimate
\eqref{eq:uniform-square-hard-edge-expansion} and the two-value estimate
\eqref{eq:second-singular-value-r3} are the inputs used in
Lemma~\ref{lem:specialized-inverse-tail}.  The corollary also justifies the triangular-array
asymptotic for a single square minor at the exponentially small thresholds used in the
high-probability theorem.
\end{remark}

\section{Rectangular Gaussian hard edge and inverse moments}
\label{app:rectangular-hard-edge}

Let
\[
        C\in\R^{(M-s)\times M},
        \qquad
        1\le s\le M-1,
\]
be standard Gaussian, and let
\[
        \mu_s(C):=\sigma_{M-s}(C)
\]
denote its smallest nonzero singular value.  We first derive the rectangular hard-edge
estimate used in the main text directly from the real Wishart eigenvalue density.  In
particular, no additive exponentially small error term is needed in the Gaussian case.

\begin{lemma}[Rectangular Gaussian hard-edge estimate]
\label{lem:rectangular-gaussian-hard-edge}
There are absolute constants \(C,c>0\) such that, for every
\(1\le s\le M-1\) and \(0<u\le c\),
\begin{equation}
        \Pbb\left\{
        \mu_s(C)\le u\frac{s}{\sqrt M}
        \right\}
        \le
        (Cu)^{s+1}.
        \label{eq:rectangular-gaussian-hard-edge}
\end{equation}
\end{lemma}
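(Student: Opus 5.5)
We bound the smallest eigenvalue of the Wishart matrix directly. Set \(m:=M-s\), \(W:=CC^\top\), which is real Wishart of dimension \(m\) with \(M\) degrees of freedom, and let \(\lambda_1=\mu_s(C)^2\) be its smallest eigenvalue. In singular-value form, the ordered density \eqref{eq:ordered-singular-value-density} with \((m,N)=(M-s,M)\) carries the factor \(x_1^{N-m}=x_1^{s}\) for the smallest singular value \(x_1\). The plan mirrors the proof of part~(c) of Lemma~\ref{lem:square-hard-edge-package}. On the chamber, bound each repulsion factor \(x_j^2-x_1^2\le x_j^2\) for \(j\ge2\), discard \(e^{-x_1^2/2}\le1\), and integrate \(x_1\) over \((0,r)\). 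This gives
\[
        \Pbb\{\mu_s(C)\le r\}
        \le
        \frac{m!}{\mathcal Z_{m,M}}\cdot\frac{r^{s+1}}{s+1}\cdot
        \frac{\mathcal Z_{m-1,M+1}}{(m-1)!}.
\]
The integral over \(x_2<\dots<x_m\) has exponent \(s+2=(M+1)-(m-1)\), so it is exactly the partition function \(\mathcal Z_{m-1,M+1}/(m-1)!\) after enlarging the domain to \(x_2>0\).

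The key step is to evaluate the ratio \(\mathcal Z_{m-1,M+1}/\mathcal Z_{m,M}\) using the Laguerre--Selberg formula \eqref{eq:singular-value-partition-evaluation}. The powers of \(2\) contribute \(2^{(m-1)(M+1)/2-(m-1)-mM/2+m}=2^{(m-M+1)/2}=2^{(1-s)/2}\). In the gamma products, the factors \(\Gamma((N-m+j)/2)\) become \(\Gamma((s+2+j)/2)\) for \(j\le m-1\) against \(\Gamma((s+j)/2)\) for \(j\le m\). These telescope to leave \(\Gamma((M+1)/2)\Gamma((M)/2)/(\Gamma((s+1)/2)\Gamma((s+2)/2))\) up to boundary terms, while the \(\Gamma(1+j/2)\) factors leave \(\Gamma(3/2)/\Gamma(1+m/2)\). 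The resulting bound has the form
\[
        \Pbb\{\mu_s\le r\}\le \frac{m\,2^{(1-s)/2}\,\Gamma(\tfrac{M+1}{2})\Gamma(\tfrac M2)\Gamma(\tfrac32)}{(s+1)\,\Gamma(1+\tfrac m2)\,\Gamma(\tfrac{s+1}{2})\Gamma(\tfrac{s+2}{2})}\,r^{s+1},
\]
with the exact boundary bookkeeping checked carefully.

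Next, substitute \(r=us/\sqrt M\) and apply Stirling's formula. The task is to show that the prefactor times \((s/\sqrt M)^{s+1}\) is at most \(C^{s+1}\). Heuristically, \(\Gamma(\tfrac{M+1}{2})\Gamma(\tfrac M2)/\Gamma(1+\tfrac m2)\) behaves like \(M^{M/2}\cdots/m^{m/2}\), roughly \((M/2e)^{s/2}\) times \((M/m)^{m/2}\le e^{s/2}\), and times polynomial factors. This is balanced by \(M^{-(s+1)/2}\). Meanwhile \(\Gamma(\tfrac{s+1}{2})\Gamma(\tfrac{s+2}{2})\approx (s/2e)^{s}\cdot\mathrm{poly}\) cancels \(s^{s+1}\) up to \(C^{s}\). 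The main obstacle is this uniform Stirling bookkeeping. It requires using \((M/m)^{m/2}=(1+s/m)^{m/2}\le e^{s/2}\), keeping leftover polynomial factors in \(M,m\) (such as the \(m\) and \(\sqrt M\)-type terms) absorbed into \(C^{s+1}\), and handling the edge case \(m\) small. When \(s=M-1\), \(C\) is a single row and the bound can be checked directly against the \(\chi_M\) density.

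After this, the constraint \(u\le c\) is not even needed for the pure power bound, since the estimate holds for all \(r\). We keep it only to match the statement.
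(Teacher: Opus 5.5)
Your route is the paper's route written in singular-value coordinates. Under \(\lambda=x^2\) your integral becomes the paper's Wishart-eigenvalue bound, with the same decoupling \(x_j^2-x_1^2\le x_j^2\), the same domain enlargement, and the same Selberg ratio. The reduction
\[
\Pbb\{\mu_s(C)\le r\}\le\frac{m}{s+1}\,\frac{\mathcal Z_{m-1,M+1}}{\mathcal Z_{m,M}}\,r^{s+1}
\]
and your power of \(2\) are correct. The evaluation of the ratio contains an error that would be fatal if used as written, and your plan for the final step does not work as described.

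\textbf{The telescoping.} In the numerator, the factors \(\Gamma((s+2+j)/2)\), \(1\le j\le m-1\), have arguments \(\frac{s+3}{2},\dots,\frac{M+1}{2}\). In the denominator the arguments are \(\frac{s+1}{2},\dots,\frac M2\). So \(\Gamma(M/2)\) cancels rather than survives, and
\[
\frac{\mathcal Z_{m-1,M+1}}{\mathcal Z_{m,M}}
=\frac{2^{(1-s)/2}\,\Gamma(\tfrac32)\,\Gamma(\tfrac{M+1}{2})}{\Gamma(1+\tfrac m2)\,\Gamma(\tfrac{s+1}{2})\,\Gamma(\tfrac{s+2}{2})}
=\frac{2^{(s-1)/2}}{s!}\,\frac{\Gamma(\tfrac{M+1}{2})}{\Gamma(\tfrac{m+2}{2})},
\]
where the second form uses the duplication formula. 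At \(m=1\) this equals \(1/\mathcal Z_{1,M}=2^{1-M/2}/\Gamma(M/2)\), so no separate \(\chi_M\) check is needed. With your extra \(\Gamma(M/2)\), the bound is off by a superexponential factor that no absolute \(C\) can absorb. Your Stirling heuristic is only consistent with a single \(\Gamma(\frac{M+1}{2})\) in the numerator, so it silently uses the correct ratio.

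\textbf{The polynomial factors.} Leftover powers of \(M\) cannot be absorbed into \(C^{s+1}\). For \(s=1\) the claim is \(\Pbb\{\mu_1\le u/\sqrt M\}\le (Cu)^2\) uniformly in \(M\), so a stray \(\sqrt M\) or \(m\) would destroy it. The factors must cancel exactly, and they do. The two gamma arguments above differ by exactly \((s-1)/2\), hence
\[
\Gamma(\tfrac{M+1}{2})/\Gamma(\tfrac{m+2}{2})\le M^{(s-1)/2}.
\]
Note the exponent is \((s-1)/2\), not the \(s/2\) of your heuristic; the missing \(M^{-1/2}\) is exactly what is needed. At \(r=us/\sqrt M\) this gives
\[
\Pbb\{\mu_s(C)\le r\}\le\frac mM\cdot\frac{s}{s+1}\cdot\frac{s^s}{s!}\,2^{(s-1)/2}\,u^{s+1}\le(4u)^{s+1}.
\]
The prefactor \(m\) is cancelled by \(M^{-1}\), and \(s^s/s!\le e^s\) with \(\sqrt2\,e<4\). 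This is precisely the paper's closing step, which uses \(n\le M\) and \(s!\ge(s/e)^s\). You are right that the restriction \(u\le c\) is never used.
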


\begin{proof}
Set
\[
        n:=M-s.
\]
The nonzero squared singular values of \(C\) are the eigenvalues of the \(n\times n\)
Wishart matrix
\[
        W:=CC^\top.
\]
Write these eigenvalues in increasing order as
\[
        0<\lambda_1<\lambda_2<\cdots<\lambda_n.
\]
Thus
\[
        \mu_s(C)^2=\lambda_1.
\]

Use the convention \(Z_{0,N}=1\) for empty-dimensional integrals.
For later use, define the Laguerre partition function
\[
        Z_{n,M}
        :=
        \int_{(0,\infty)^n}
        e^{-\frac12\sum_{i=1}^n\lambda_i}
        \prod_{i=1}^n\lambda_i^{(s-1)/2}
        \prod_{1\le i<j\le n}|\lambda_j-\lambda_i|
        \,d\lambda_1\cdots d\lambda_n.
\]
The ordered eigenvalues have density
\begin{equation}
        \frac{n!}{Z_{n,M}}
        e^{-\frac12\sum_{i=1}^n\lambda_i}
        \prod_{i=1}^n\lambda_i^{(s-1)/2}
        \prod_{1\le i<j\le n}(\lambda_j-\lambda_i)
        \label{eq:ordered-wishart-density}
\end{equation}
on the chamber
\[
        0<\lambda_1<\cdots<\lambda_n.
\]

Fix \(x>0\) and write \(t=\lambda_1\).  On the ordered chamber,
\[
        0<t<\lambda_j,
        \qquad j=2,\ldots,n,
\]
and therefore
\[
        \lambda_j-t\le\lambda_j.
\]
Using this inequality for the factors coupling \(\lambda_1\) to the remaining eigenvalues,
discarding \(e^{-t/2}\le1\), and enlarging the domain of the remaining eigenvalues from
\(t<\lambda_2<\cdots<\lambda_n\) to
\(0<\lambda_2<\cdots<\lambda_n\), we obtain
\begin{align}
        \Pbb\{\lambda_1\le x\}
        &\le
        \frac{n!}{Z_{n,M}}
        \int_0^x t^{(s-1)/2}\,dt
\nonumber\\
        &\qquad\qquad\times
        \int_{0<\lambda_2<\cdots<\lambda_n}
        e^{-\frac12\sum_{j=2}^n\lambda_j}
        \prod_{j=2}^n\lambda_j^{(s+1)/2}
        \prod_{2\le i<j\le n}(\lambda_j-\lambda_i)
        \,d\lambda_2\cdots d\lambda_n.
        \label{eq:wishart-smallest-bound-before-partition}
\end{align}
The second integral is
\[
        \frac{1}{(n-1)!}Z_{n-1,M+1}.
\]
Indeed, an \((n-1)\times(n-1)\) Wishart matrix with \(M+1\) degrees of freedom has
hard-edge exponent
\[
        \frac{(M+1)-(n-1)-1}{2}
        =
        \frac{s+1}{2}.
\]
Since
\[
        \int_0^x t^{(s-1)/2}\,dt
        =
        \frac{2}{s+1}x^{(s+1)/2},
\]
equation \eqref{eq:wishart-smallest-bound-before-partition} gives
\begin{equation}
        \Pbb\{\lambda_1\le x\}
        \le
        B_{M,s}\,x^{(s+1)/2},
        \qquad
        B_{M,s}
        :=
        \frac{2n}{s+1}
        \frac{Z_{n-1,M+1}}{Z_{n,M}}.
        \label{eq:wishart-smallest-partition-ratio}
\end{equation}

The Laguerre--Selberg integral gives
\begin{equation}
        Z_{n,M}
        =
        2^{nM/2}
        \prod_{j=1}^n
        \frac{
        \Gamma(1+j/2)\Gamma((s+j)/2)
        }{
        \Gamma(3/2)
        }.
        \label{eq:laguerre-partition-function}
\end{equation}
Applying the same formula with dimension \(n-1\) and \(M+1\) degrees of freedom,
and canceling the common factors, yields
\begin{equation}
        \frac{Z_{n-1,M+1}}{Z_{n,M}}
        =
        2^{-(s+1)/2}
        \frac{
        \Gamma(3/2)\Gamma((M+1)/2)
        }{
        \Gamma(1+n/2)
        \Gamma((s+1)/2)
        \Gamma((s+2)/2)
        }.
        \label{eq:laguerre-partition-ratio}
\end{equation}
By the duplication formula,
\[
        \Gamma\left(\frac{s+1}{2}\right)
        \Gamma\left(\frac{s+2}{2}\right)
        =
        2^{-s}\sqrt{\pi}\,\Gamma(s+1),
\]
and \(\Gamma(3/2)=\sqrt{\pi}/2\).  Hence
\begin{equation}
        B_{M,s}
        =
        \frac{n\,2^{(s-1)/2}}{s+1}
        \frac{
        \Gamma((M+1)/2)
        }{
        \Gamma((n+2)/2)\Gamma(s+1)
        }.
        \label{eq:BMs-explicit}
\end{equation}

An elementary consequence of Stirling's bounds is
\[
        \frac{\Gamma((M+1)/2)}
        {\Gamma((n+2)/2)}
        \le
        (CM)^{(s-1)/2},
\]
uniformly for \(n=M-s\ge1\).  Using \(n\le M\) and
\[
        \Gamma(s+1)=s!\ge\left(\frac{s}{e}\right)^s,
\]
equation \eqref{eq:BMs-explicit} implies
\begin{equation}
        B_{M,s}
        \le
        \left(\frac{C\sqrt M}{s}\right)^{s+1}.
        \label{eq:BMs-bound}
\end{equation}
Here and below the absolute constant \(C\) may increase from line to line.

Substituting \(x=\varepsilon^2\) into
\eqref{eq:wishart-smallest-partition-ratio} and using
\eqref{eq:BMs-bound}, we conclude that
\begin{equation}
        \Pbb\{\mu_s(C)\le\varepsilon\}
        \le
        \left(
        \frac{C\sqrt M}{s}\varepsilon
        \right)^{s+1}.
        \label{eq:rectangular-gaussian-hard-edge-unscaled}
\end{equation}
Taking
\[
        \varepsilon=u\frac{s}{\sqrt M}
\]
proves \eqref{eq:rectangular-gaussian-hard-edge}.
\end{proof}

\begin{remark}
The scale in \eqref{eq:rectangular-gaussian-hard-edge} is equivalent, up to absolute
constants, to the usual rectangular least-singular-value scale
\[
        \sqrt M-\sqrt{M-s}
        =
        \frac{s}{\sqrt M+\sqrt{M-s}}.
\]
The exponent \(s+1\) is the real hard-edge codimension exponent.  General subgaussian
smallest-singular-value theorems give the same scale and exponent, often with an additional
term exponentially small in \(M\).  The direct Gaussian calculation above removes that
additional term, which is useful here because the threshold \(u\) is itself exponentially
small.
\end{remark}

\begin{corollary}[Central inverse-operator moments]
\label{cor:central-inverse-operator-moments}
Fix \(\eta\in(0,1/2)\).  Let
\[
        C\in\R^{m\times M}
\]
be standard Gaussian, and set
\[
        s:=M-m.
\]
Assume that
\[
        \frac{s}{M}\in[\eta,1-\eta].
\]
Then, for every fixed \(0<p<\infty\), there are constants
\(M_0=M_0(\eta,p)\) and \(C_{\eta,p}<\infty\) such that, for
\(M\ge M_0\),
\begin{equation}
        \Ebb
        \left\|(CC^\top)^{-1/2}\right\|^p
        =
        \Ebb\,\sigma_m(C)^{-p}
        \le
        C_{\eta,p}M^{-p/2}.
        \label{eq:central-inverse-operator-moments}
\end{equation}
More generally, the estimate holds whenever \(s+1>p\), with a constant that may
depend on \(p\) and on a lower bound for \(s/M\).
\end{corollary}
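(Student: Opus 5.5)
We bound the inverse moment by integrating the rectangular hard-edge tail of Lemma~\ref{lem:rectangular-gaussian-hard-edge}, in its unscaled form \eqref{eq:rectangular-gaussian-hard-edge-unscaled}, against the function \(\varepsilon\mapsto\varepsilon^{-p}\). Write \(\mu=\sigma_m(C)\); the identity \(\|(CC^\top)^{-1/2}\|=\mu^{-1}\) is immediate from the singular value decomposition. Then
\[
        \Ebb\mu^{-p}
        =
        p\int_0^\infty \varepsilon^{-p-1}\Pbb\{\mu\le\varepsilon\}\,d\varepsilon .
\]
We split this integral at the scale \(\varepsilon_0:=c\,s/\sqrt M\), which is of order \(\sqrt M\) because \(s\ge\eta M\).

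On \([\varepsilon_0,\infty)\) we bound the probability by \(1\). This part contributes at most \(\varepsilon_0^{-p}=(c s/\sqrt M)^{-p}\le (c\eta)^{-p}M^{-p/2}\).

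On \((0,\varepsilon_0]\) we insert \eqref{eq:rectangular-gaussian-hard-edge-unscaled}, \(\Pbb\{\mu\le\varepsilon\}\le (C\sqrt M\varepsilon/s)^{s+1}\). The contribution is then at most
\[
        p\Bigl(\frac{C\sqrt M}{s}\Bigr)^{s+1}\int_0^{\varepsilon_0}\varepsilon^{s-p}\,d\varepsilon
        =
        \frac{p}{s+1-p}\,(Cc)^{s+1}\,\varepsilon_0^{-p}.
\]
The integral converges exactly when \(s+1>p\). With \(c\) chosen so that \(Cc\le 1\), this term is at most \(\frac{p}{s+1-p}\,\varepsilon_0^{-p}\lesssim_{\eta,p}M^{-p/2}\). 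For fixed \(p\), the condition \(s+1>p\) holds once \(M\ge M_0(\eta,p)\), since \(s\ge\eta M\). Adding the two parts proves \eqref{eq:central-inverse-operator-moments}.

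For the general statement, the same computation applies whenever \(s+1>p\). The constant then depends on \(p\), on the gap \(s+1-p\), and on a lower bound \(\beta_0\) for \(s/M\), through the bound \(\varepsilon_0^{-p}\le (c\beta_0)^{-p}M^{-p/2}\). The gap \(s+1-p\) is the place where the constant could degenerate. If one wants a constant depending only on \(p\) and \(\beta_0\), it is enough to require \(s+1\ge p+1\), or to handle the finitely many small values of \(s\) separately.

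The main obstacle is that the tail bound is only available for \(u\le c\). The proposal avoids this issue by design: it uses the pure-power estimate exactly on the range where it holds and the trivial bound beyond it. No additive error term appears, because the Gaussian estimate has none. The remaining checks are routine: the choice of \(c\) with \(Cc\le1\), and the uniformity in \(s\) over the central range, which follows because every constant depends on \(s\) only through \(s/M\ge\eta\) and \(s+1-p\ge1\).
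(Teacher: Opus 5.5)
Your proof is correct and is essentially the paper's argument. The paper integrates the tail of $Y=\sqrt M/\sigma_m(C)$ and splits at a fixed level $L_\eta\ge\max\{v_\eta,2C_\eta\}$, which after the change of variables $v=\sqrt M/\varepsilon$ is a slightly more conservative version of your split at $\varepsilon_0=cs/\sqrt M$, with the pure-power hard-edge bound used below the split and the trivial bound above it. Your worry about the gap $s+1-p$ is unnecessary: for fixed $p$, every integer $s$ with $s+1>p$ satisfies $s+1-p\ge\lfloor p\rfloor+1-p>0$, so the constant depends only on $p$ and the lower bound for $s/M$.
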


\begin{proof}
Let
\[
        \mu:=\sigma_m(C),
        \qquad
        Y:=\frac{\sqrt M}{\mu}.
\]
It is enough to show that
\[
        \Ebb Y^p\le C_{\eta,p}.
\]

By Lemma~\ref{lem:rectangular-gaussian-hard-edge},
\[
        \Pbb\left\{
        \mu\le u\frac{s}{\sqrt M}
        \right\}
        \le
        (Cu)^{s+1},
        \qquad
        0<u\le c.
\]
For \(v>0\), the event \(\{Y\ge v\}\) is
\[
        \left\{
        \mu\le\frac{\sqrt M}{v}
        \right\}.
\]
Writing
\[
        \frac{\sqrt M}{v}
        =
        \left(\frac{M}{sv}\right)\frac{s}{\sqrt M},
\]
we obtain
\[
        \Pbb\{Y\ge v\}
        \le
        \left(\frac{CM}{sv}\right)^{s+1}
        \le
        \left(\frac{C_\eta}{v}\right)^{s+1}
\]
whenever
\[
        v\ge v_\eta:=\frac{1}{c\eta}.
\]
After increasing \(C_\eta\), we may therefore write
\begin{equation}
        \Pbb\{Y\ge v\}
        \le
        \min\left\{
        1,
        \left(\frac{C_\eta}{v}\right)^{s+1}
        \right\},
        \qquad v>0.
        \label{eq:central-inverse-tail}
\end{equation}

Choose \(L_\eta\ge\max\{v_\eta,2C_\eta\}\).  By the tail-integral formula,
\begin{align*}
        \Ebb Y^p
        &=
        p\int_0^\infty
        v^{p-1}\Pbb\{Y\ge v\}\,dv
\\
        &\le
        L_\eta^p
        +
        pC_\eta^{s+1}
        \int_{L_\eta}^\infty
        v^{p-s-2}\,dv.
\end{align*}
If \(s+1>p\), then
\[
        \int_{L_\eta}^\infty
        v^{p-s-2}\,dv
        =
        \frac{L_\eta^{p-s-1}}{s+1-p}.
\]
Consequently,
\[
        \Ebb Y^p
        \le
        L_\eta^p
        +
        \frac{pL_\eta^p}{s+1-p}
        \left(\frac{C_\eta}{L_\eta}\right)^{s+1}
        \le
        C_{\eta,p}.
\]
For fixed \(p\), the condition \(s+1>p\) holds for all sufficiently large \(M\)
because \(s\ge\eta M\).  Finally,
\[
        \mu^{-p}
        =
        M^{-p/2}Y^p,
\]
which proves \eqref{eq:central-inverse-operator-moments}.
\end{proof}

\begin{corollary}[Moments of the row-conditioned factor]
\label{cor:row-conditioned-factor-moments}
Under the assumptions of
Corollary~\ref{cor:central-inverse-operator-moments}, let
\(S\in\R^{m\times m}\) have the same singular values as \(C\), and let
\[
        G_1\in\R^{s\times m}
\]
be an independent standard Gaussian matrix.  Then, for every fixed
\(0<p<\infty\) and all sufficiently large \(M\),
\begin{equation}
        \Ebb\|S^{-1}\|^p
        \le
        C_{\eta,p}M^{-p/2},
        \label{eq:central-S-inverse-moments}
\end{equation}
and
\begin{equation}
        \Ebb\|G_1S^{-1}\|^p
        \le
        C_{\eta,p}.
        \label{eq:central-conditioned-factor-moments}
\end{equation}
Consequently, for
\[
        K:=(-G_1S^{-1},I_s),
\]
one has
\begin{equation}
        \Ebb\|K\|^p\le C_{\eta,p}.
        \label{eq:central-K-moments}
\end{equation}
\end{corollary}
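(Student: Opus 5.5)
The first estimate \eqref{eq:central-S-inverse-moments} is immediate. Since \(S\) has the same singular values as \(C\), we have \(\|S^{-1}\|=\sigma_m(C)^{-1}\). Corollary~\ref{cor:central-inverse-operator-moments} then gives \(\Ebb\|S^{-1}\|^p=\Ebb\,\sigma_m(C)^{-p}\le C_{\eta,p}M^{-p/2}\) for \(M\ge M_0(\eta,p)\). This is exactly \eqref{eq:central-S-inverse-moments}.

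For \eqref{eq:central-conditioned-factor-moments}, the plan is to use the independence of \(G_1\) and \(S\) and condition on \(S\). Submultiplicativity gives \(\|G_1S^{-1}\|\le\|G_1\|\,\|S^{-1}\|\). Since \(G_1\) is independent of \(S\), the expectation factorizes:
\[
        \Ebb\|G_1S^{-1}\|^p\le \Ebb\|G_1\|^p\,\Ebb\|S^{-1}\|^p .
\]
The matrix \(G_1\) is an \(s\times m\) standard Gaussian matrix with \(s,m\le M\). Standard operator-norm bounds control its moments: Gordon's inequality gives \(\Ebb\|G_1\|\le\sqrt s+\sqrt m\le2\sqrt M\), and Gaussian concentration of the \(1\)-Lipschitz function \(\|\cdot\|\) shows that \(\|G_1\|-\Ebb\|G_1\|\) has subgaussian tails with an absolute constant. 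Together these yield \(\Ebb\|G_1\|^p\le C_pM^{p/2}\). Multiplying by \eqref{eq:central-S-inverse-moments}, the factors \(M^{p/2}\) and \(M^{-p/2}\) cancel, which leaves \(C_{\eta,p}\). This is the whole argument: the only point of care is that the \(\sqrt M\) growth of \(\|G_1\|\) is exactly compensated by the \(M^{-1/2}\) decay of \(\|S^{-1}\|\) in the central regime. If one wanted to avoid the full operator-norm concentration, the crude bound \(\|G_1\|\le\|G_1\|_F\) would lose only a factor \(\sqrt M\) in each power and is therefore not enough. The concentration step, or an equivalent moment bound for \(\|G_1\|/\sqrt M\), is therefore the one genuinely needed input, though it is standard.

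Finally, for \eqref{eq:central-K-moments}, the horizontal concatenation gives
\[
        \|K\|\le\|G_1S^{-1}\|+\|I_s\|=\|G_1S^{-1}\|+1 .
\]
Applying \((x+y)^p\le 2^{p}(x^p+y^p)\) and then \eqref{eq:central-conditioned-factor-moments} gives \(\Ebb\|K\|^p\le 2^p(C_{\eta,p}+1)\), after renaming the constant.

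All constants depend only on \(\eta\) and \(p\), and the threshold "\(M\) sufficiently large" is inherited from Corollary~\ref{cor:central-inverse-operator-moments}. The estimates are therefore uniform over \(s\in\mathcal C_{\eta,M}\), which is the form used in Lemma~\ref{lem:conditional-linearization}.
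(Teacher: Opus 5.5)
Your proposal is correct and follows the paper's proof step for step: the same reduction of \(\|S^{-1}\|\) to Corollary~\ref{cor:central-inverse-operator-moments}, a Gaussian operator-norm tail giving \(\Ebb\|G_1\|^{p}\lesssim_p M^{p/2}\), and \(\|K\|\le 1+\|G_1S^{-1}\|\). The only difference is minor: you factor \(\Ebb\|G_1\|^p\,\Ebb\|S^{-1}\|^p\) using the independence of \(G_1\) and \(S\), whereas the paper applies Cauchy--Schwarz with \(2p\)-th moments, which does not need independence; either works.
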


\begin{proof}
The first estimate follows directly from
Corollary~\ref{cor:central-inverse-operator-moments}.  For the second, Hölder's
inequality gives
\[
        \Ebb\|G_1S^{-1}\|^p
        \le
        \left(\Ebb\|G_1\|^{2p}\right)^{1/2}
        \left(\Ebb\|S^{-1}\|^{2p}\right)^{1/2}.
\]
The standard Gaussian operator-norm tail
\[
        \Pbb\{\|G_1\|\ge \sqrt{s}+\sqrt{m}+t\}
        \le e^{-t^2/2},
        \qquad t\ge0,
\]
implies, by tail integration,
\[
        \Ebb\|G_1\|^{2p}
        \le C_p(\sqrt{s}+\sqrt{m})^{2p}
        \le C_{\eta,p}M^p.
\]
On the other hand,
\[
        \Ebb\|S^{-1}\|^{2p}
        \le
        C_{\eta,p}M^{-p}.
\]
Thus
\[
        \Ebb\|G_1S^{-1}\|^p\le C_{\eta,p}.
\]
Finally,
\[
        \|K\|
        \le
        1+\|G_1S^{-1}\|,
\]
which proves \eqref{eq:central-K-moments}.
\end{proof}

We now record the inverse-moment consequences used in the overlap bound.

\begin{lemma}[Truncated inverse moments]
\label{lem:rectangular-truncated-inverse-moments}
Let
\[
        \mu:=\mu_s(C).
\]
There are absolute constants \(C,c>0\) with the following properties.

If \(s\ge2\) and
\[
        0<a\le c\frac{s}{\sqrt M},
\]
then
\begin{equation}
        \Ebb
        \min\left\{
        1,\frac{a^2}{\mu^2}
        \right\}
        \le
        C a^2\frac{M}{s^2}.
        \label{eq:rectangular-truncated-inverse-moment}
\end{equation}

If \(s=1\) and
\[
        0<a\le\frac{c}{\sqrt M},
\]
then
\begin{equation}
        \Ebb
        \min\left\{
        1,\frac{a^2}{\mu^2}
        \right\}
        \le
        C a^2M
        \log\left(\frac{e}{a\sqrt M}\right).
        \label{eq:rectangular-endpoint-inverse-moment}
\end{equation}
\end{lemma}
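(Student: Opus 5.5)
The plan is to follow the tail-integration argument already used for the main-text truncated inverse moment lemma, feeding it the rectangular Gaussian hard-edge estimate of Lemma~\ref{lem:rectangular-gaussian-hard-edge}. Set
\[
        X:=\frac{\sqrt M}{s}\mu,
        \qquad
        b:=\frac{a\sqrt M}{s},
\]
so that \(\min\{1,a^2/\mu^2\}=\min\{1,b^2/X^2\}\). The hypothesis \(a\le cs/\sqrt M\) becomes \(b\le c\). Lemma~\ref{lem:rectangular-gaussian-hard-edge} gives \(\Pbb\{X\le u\}\le (Cu)^{s+1}\) for \(0<u\le c\). Decreasing \(c\) so that \(Cc\le 1/2\) makes this bound useful throughout the range \([b,c]\).

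Next I would use the layer-cake identity for the decreasing function \(u\mapsto\min\{1,b^2/u^2\}\), which equals \(1\) on \([0,b]\):
\[
        \Ebb\min\left\{1,\frac{b^2}{X^2}\right\}
        =
        \Pbb\{X\le b\}
        +
        2b^2\int_b^\infty u^{-3}\Pbb\{b<X\le u\}\,du
        \le
        2b^2\int_b^\infty u^{-3}\Pbb\{X\le u\}\,du .
\]
The inequality holds because \(\Pbb\{X\le b\}=2b^2\int_b^\infty u^{-3}\,du\cdot\Pbb\{X\le b\}\). I would split the integral at \(u=c\). On \(u>c\), bounding the probability by \(1\) gives at most \(b^2/c^2\lesssim b^2\). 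On \([b,c]\), the hard-edge bound gives
\[
        2b^2C^{s+1}\int_b^c u^{s-2}\,du.
\]
For \(s\ge2\) this is at most
\[
        \frac{2b^2C^{2}(Cc)^{s-1}}{s-1}\le C'b^2,
\]
uniformly in \(s\), because \((Cc)^{s-1}\le 2^{1-s}\). Hence the expectation is \(O(b^2)=O(a^2M/s^2)\), which is \eqref{eq:rectangular-truncated-inverse-moment}.

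For \(s=1\) the integrand on \([b,c]\) is \(C^2u^{-1}\), so that piece contributes \(2b^2C^2\log(c/b)\). Here \(b=a\sqrt M\le c\). Using \(\log(c/b)\le\log(e/(a\sqrt M))\) (since \(c\le 1\le e\)), and noting that the \(u>c\) piece is \(O(b^2)\le O(b^2\log(e/b))\), we get \(O\bigl(a^2M\log(e/(a\sqrt M))\bigr)\), which is \eqref{eq:rectangular-endpoint-inverse-moment}.

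The only delicate point is the uniformity in \(s\) of the middle integral. The factor \(C^{s+1}\) grows exponentially, and it is exactly cancelled by \(c^{s-1}\) once \(c\) is shrunk so that \(Cc\le 1/2\). I would make this choice of \(c\) explicit at the start so that the constants in both cases are absolute. The other possible worry is the layer-cake step; it is fine because the function is bounded and monotone, so no boundary terms at \(\infty\) arise.
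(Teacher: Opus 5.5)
Your proposal is correct and is essentially the paper's own proof: it uses the same normalization \(X=\sqrt M\mu/s\), \(b=a\sqrt M/s\), the same layer-cake formula split at \(c\), the same choice \(Cc\le 1/2\) to make the \([b,c]\) piece uniform in \(s\ge2\), and the same logarithmic integral at \(s=1\). The layer-cake step you wrote as an inequality is actually the equality the paper uses, as your own justification shows.
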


\begin{proof}
Set
\[
        X:=\frac{\sqrt M}{s}\mu,
        \qquad
        b:=\frac{a\sqrt M}{s}.
\]
After decreasing the absolute constant \(c>0\) if necessary,
Lemma~\ref{lem:rectangular-gaussian-hard-edge} gives
\begin{equation}
        \Pbb\{X\le u\}
        \le
        (Cu)^{s+1},
        \qquad
        0<u\le c,
        \label{eq:normalized-rectangular-hard-edge}
\end{equation}
where \(Cc\le1/2\).

For any positive random variable \(X\), the layer-cake formula gives
\begin{equation}
        \Ebb\min\left\{1,\frac{b^2}{X^2}\right\}
        =
        2b^2
        \int_b^\infty
        u^{-3}\Pbb\{X\le u\}\,du.
        \label{eq:truncated-inverse-layer-cake}
\end{equation}
Split the integral at \(c\).  For \(s\ge2\), use
\eqref{eq:normalized-rectangular-hard-edge} on \([b,c]\) and the trivial bound by \(1\)
on \([c,\infty)\):
\begin{align*}
        \Ebb\min\left\{1,\frac{b^2}{X^2}\right\}
        &\le
        2b^2C^{s+1}
        \int_b^c u^{s-2}\,du
        +
        2b^2\int_c^\infty u^{-3}\,du
\\
        &\le
        Cb^2.
\end{align*}
The constant is uniform in \(s\ge2\) because \(Cc\le1/2\).  Since
\[
        b^2=a^2\frac{M}{s^2},
\]
this proves \eqref{eq:rectangular-truncated-inverse-moment}.

When \(s=1\), the integral over \([b,c]\) becomes logarithmic:
\[
        2b^2C^2
        \int_b^c\frac{du}{u}
        \le
        Cb^2\log\left(\frac{e}{b}\right).
\]
Since \(b=a\sqrt M\) in this case, this proves
\eqref{eq:rectangular-endpoint-inverse-moment}.
\end{proof}

\begin{remark}
In the application to the overlap profile, the singular inverse-moment term is
\[
        \Ebb
        \min\left\{
        1,\frac{t^2s}{\mu_s(C)^2}
        \right\}.
\]
Thus one applies Lemma~\ref{lem:rectangular-truncated-inverse-moments} with
\[
        a=t\sqrt{s}.
\]
The required range is therefore
\[
        t\le c\sqrt{\frac{s}{M}}.
\]
\end{remark}

\section{Inverse-Wishart trace estimates}
\label{app:inverse-wishart}

Let \(C\in\R^{m\times M}\) be standard Gaussian, \(W=CC^\top\), and put
\(d=M-m\).  In the main text, \(m=M-s\) and \(d=s\).  The matrix \(W\) has a real
Wishart distribution with dimension \(m\), degrees of freedom \(M\), and identity scale.
The required trace estimates are standard inverse-Wishart consequences.  General inverse
Wishart moments are computed explicitly by Matsumoto \cite{Matsumoto2012}; the invariant
moment framework of Letac and Massam \cite{LetacMassam2004} gives another standard
source.

For identity scale and \(d>3\), the inverse-Wishart formulas give
\[
        \Ebb W^{-1}=\frac{I_m}{d-1},
        \qquad
        \Ebb\tr(W^{-1})=\frac{m}{d-1}.
\]
They also give, for the entries of \(X=W^{-1}\),
\[
        \Var(X_{ii})=\frac{2}{(d-1)^2(d-3)},
        \qquad
        \Cov(X_{ii},X_{jj})=\frac{2}{d(d-1)^2(d-3)}\quad(i\ne j),
\]
and
\[
        \Ebb X_{ij}^2=\frac{1}{d(d-1)(d-3)}\quad(i\ne j).
\]
Therefore
\[
        \Var\bigl(\tr(W^{-1})\bigr)
        =
        \frac{2m}{(d-1)^2(d-3)}
        +
        \frac{2m(m-1)}{d(d-1)^2(d-3)}.
\]
When \(d/M\in[\eta,1-\eta]\), this variance is \(O_\eta(M^{-2})\).  Hence
\[
        \tr(W^{-1})=\frac{m}{d}+o_{L^2}(1).
\]
Similarly,
\begin{align*}
        \Ebb\tr(W^{-2})
        &=\sum_i \Ebb X_{ii}^2+
          \sum_{i\ne j}\Ebb X_{ij}^2 \\
        &=\frac{m(M-1)}{d(d-1)(d-3)}
          \le C_\eta M^{-1},
\end{align*}
again uniformly when \(d/M\in[\eta,1-\eta]\).  These are precisely
\eqref{eq:tr-W-inv-conc} and \eqref{eq:tr-W-inv2} in the main text.

\section{Hypergeometric overlap tails}
\label{app:hypergeometric}

Fix an \(M\)-subset \(T\subset[2M-1]\), and let \(U\) be uniformly distributed among all
\(M\)-subsets.  The overlap-difference parameter
\[
        S=|T\setminus U|
\]
has mass
\[
        \Pbb\{S=s\}=\frac{\binom Ms\binom{M-1}s}{\binom{2M-1}{M}}.
\]
Equivalently, \(M-S=|T\cap U|\) is hypergeometric with population size \(2M-1\),
number of marked elements \(M\), and sample size \(M\).  Its mean is
\[
        \Ebb S=M-\frac{M^2}{2M-1}=\frac{M(M-1)}{2M-1}=\frac M2+O(1).
\]
Standard hypergeometric Chernoff bounds imply that, for every fixed \(\eta\in(0,1/2)\),
\[
        \Pbb\{S\notin[\eta M,(1-\eta)M]\}\le e^{-c_\eta M}
\]
for all sufficiently large \(M\).  Consequently,
\[
        \frac1{K_M}\sum_{\substack{1\le s\le M-1\\s\notin[\eta M,(1-\eta)M]}}
        \binom Ms\binom{M-1}s\frac Ms
        \longrightarrow0,
\]
and the endpoint \(s=1\) contribution is exponentially negligible in all places where it
is used in the main text.

\section*{Acknowledgments}
The author thanks Afonso Bandeira and the Randomstrasse101 blog for bringing
the critical-threshold stability problem in phase retrieval to the author's
attention.

\newcommand{\etalchar}[1]{$^{#1}$}

\end{document}